\documentclass[11pt]{amsart}

\usepackage{pdfsync}
\usepackage{amsthm}
\usepackage{amsmath}
\usepackage{amssymb}
\usepackage{mathtools}


\usepackage{verbatim}
\usepackage{geometry}

\usepackage[shortlabels]{enumitem}
\setlist{nosep}
\newcommand{\eps}{\ensuremath{\varepsilon}}

\newcommand{\Prob}[1]{\ensuremath{%
    \mathbb P\left[#1\right]
  }}
\newcommand{\Expect}[1]{\ensuremath{%
    \mathbb E\left[#1\right]
  }}

\usepackage[T1]{fontenc}
\usepackage{amssymb}
\usepackage{amsmath}
\usepackage{amsthm}
\usepackage{fancyhdr}
\usepackage{mathtools}
\usepackage[british]{babel}
\usepackage{geometry}
\usepackage{enumitem}
\usepackage{algpseudocode}
\usepackage{dsfont}
\usepackage{centernot}
\usepackage{xstring}
\usepackage{colortbl}
\usepackage[section]{algorithm}
\usepackage{graphicx}
\usepackage[font={footnotesize}]{caption}
\usepackage[usenames,dvipsnames,table]{xcolor}
\usepackage{pstricks,tikz}

\usetikzlibrary{calc}
\usetikzlibrary{shapes}
\usetikzlibrary{snakes}

\usepackage{xcolor}
\usepackage[h]{esvect}
\usepackage[noadjust]{cite}
\usepackage[
		bookmarksopen=true,
		bookmarksopenlevel=1,
		colorlinks=true,
		linkcolor=darkblue,
        linktoc=page,
		citecolor=darkblue,
]{hyperref}

\definecolor{OwlRed}{RGB}{ 255, 92, 168}
\definecolor{OwlGreen}{RGB}{ 90, 168, 0}
\definecolor{OwlBlue}{RGB}{ 0, 152, 233}
\definecolor{OwlYellow}{RGB}{ 242, 147, 24}
\colorlet{OwlViolet}{OwlRed!50!OwlBlue}
\colorlet{OwlBrown}{OwlRed!50!OwlGreen}
\colorlet{OwlOrange}{OwlRed!50!OwlYellow}
\colorlet{OwlCyan}{OwlGreen!50!OwlBlue}

\title[]{Almost all optimally coloured complete graphs contain a rainbow Hamilton path}
\date{\today}


\author[Stephen~Gould]{Stephen~Gould}
\email{\vspace{-20mm}\{spg377,t.j.kelly,d.kuhn,d.osthus\}@bham.ac.uk}

\author[Tom~Kelly]{Tom~Kelly}

\author[Daniela~K\"uhn]{Daniela~K\"uhn}

\author[Deryk~Osthus]{Deryk~Osthus}

\address{School of Mathematics, University of Birmingham,
Edgbaston, Birmingham, B15 2TT, United Kingdom}

\thanks{This project has received partial funding from the European Research
Council (ERC) under the European Union's Horizon 2020 research and innovation programme (grant agreement no. 786198, D.~K\"uhn and D.~Osthus).
The research leading to these results was also partially supported by the EPSRC, grant nos. EP/N019504/1 (T.~Kelly and D.~K\"uhn) and EP/S00100X/1 (D.~Osthus).}

\geometry{a4paper,tmargin=2.4cm,bmargin=2.5cm,lmargin=2.6cm,rmargin=2.6cm,headheight=1cm,headsep=1cm,footskip=1cm}

\hyphenation{quasi-random sub-graph di-graph mul-ti-graph pro-ba-bi-lis-tic ver-ti-ces}

\newtheorem{theorem}{Theorem}[section]
\newtheorem{prop}[theorem]{Proposition}
\newtheorem{lemma}[theorem]{Lemma}
\newtheorem{cor}[theorem]{Corollary}
\newtheorem{fact}[theorem]{Fact}
\newtheorem{conj}[theorem]{Conjecture}

\theoremstyle{definition}

\newtheorem{defin}[theorem]{Definition}

\newtheoremstyle{claimstyle}{5pt}{5pt}{\em}{5pt}{\em}{:}{5pt}{}
\theoremstyle{claimstyle}
\newtheorem{claim}{Claim}

\newtheoremstyle{stepstyle}{10pt}{5pt}{\em}{0pt}{\em}{:}{5pt}{}
\theoremstyle{stepstyle}

\numberwithin{equation}{section}

\definecolor{darkblue}{rgb}{0,0,0.5}

\def\noproof{{\unskip\nobreak\hfill\penalty50\hskip2em\hbox{}\nobreak\hfill%
       $\square$\parfillskip=0pt\finalhyphendemerits=0\par}\goodbreak}
\def\endproof{\noproof\bigskip}

\def\noclaimproof{{\unskip\nobreak\hfill\penalty50\hskip2em\hbox{}\nobreak\hfill%
       $-$\parfillskip=0pt\finalhyphendemerits=0\par}\goodbreak}

\def\endclaimproof{\noclaimproof\medskip}

\newdimen\margin
\def\textno#1&#2\par{
   \margin=\hsize
   \advance\margin by -4\parindent
          \setbox1=\hbox{\sl#1}
   \ifdim\wd1 < \margin
      $$\box1\eqno#2$$
   \else
      \bigbreak
      \hbox to \hsize{\indent$\vcenter{\advance\hsize by -3\parindent
      \it\noindent#1}\hfil#2$}
      \bigbreak
   \fi}


\def\lateproof#1{\removelastskip\penalty55\medskip\noindent\setcounter{claim}{0}\setcounter{step}{0}{\bf Proof of #1. }} 

\def\claimproof{\removelastskip\penalty55\medskip\noindent{\em Proof of claim: }}

\begin{document}

\newcommand{\new}[1]{\textcolour{red}{#1}}
\def\COMMENT#1{}
\def\TASK#1{}
\newcommand{\APPENDIX}[1]{}
\newcommand{\NOTAPPENDIX}[1]{#1}
\newcommand{\todo}[1]{\begin{center}\textbf{to do:} #1 \end{center}}

\def\eps{{\varepsilon}}
\newcommand{\ex}{\mathbb{E}}
\newcommand{\pr}{\mathbb{P}}
\newcommand{\cB}{\mathcal{B}}
\newcommand{\cA}{\mathcal{A}}
\newcommand{\cE}{\mathcal{E}}
\newcommand{\cS}{\mathcal{S}}
\newcommand{\cF}{\mathcal{F}}
\newcommand{\cG}{\mathcal{G}}
\newcommand{\bL}{\mathbb{L}}
\newcommand{\bF}{\mathbb{F}}
\newcommand{\bZ}{\mathbb{Z}}
\newcommand{\cH}{\mathcal{H}}
\newcommand{\cC}{\mathcal{C}}
\newcommand{\cM}{\mathcal{M}}
\newcommand{\bN}{\mathbb{N}}
\newcommand{\bR}{\mathbb{R}}
\def\O{\mathcal{O}}
\newcommand{\cP}{\mathcal{P}}
\newcommand{\cQ}{\mathcal{Q}}
\newcommand{\cR}{\mathcal{R}}
\newcommand{\cJ}{\mathcal{J}}
\newcommand{\cL}{\mathcal{L}}
\newcommand{\cK}{\mathcal{K}}
\newcommand{\cD}{\mathcal{D}}
\newcommand{\cI}{\mathcal{I}}
\newcommand{\cV}{\mathcal{V}}
\newcommand{\cT}{\mathcal{T}}
\newcommand{\cU}{\mathcal{U}}
\newcommand{\cW}{\mathcal{W}}
\newcommand{\cX}{\mathcal{X}}
\newcommand{\cY}{\mathcal{Y}}
\newcommand{\cZ}{\mathcal{Z}}
\newcommand{\1}{{\bf 1}_{n\not\equiv \delta}}
\newcommand{\eul}{{\rm e}}
\newcommand{\Erd}{Erd\H{o}s}
\newcommand{\cupdot}{\mathbin{\mathaccent\cdot\cup}}
\newcommand{\whp}{whp }
\newcommand{\bX}{\mathcal{X}}
\newcommand{\bV}{\mathcal{V}}
\newcommand{\ordsubs}[2]{(#1)_{#2}}
\newcommand{\unordsubs}[2]{\binom{#1}{#2}}
\newcommand{\ordelement}[2]{\overrightarrow{\mathbf{#1}}\left({#2}\right)}
\newcommand{\ordered}[1]{\overrightarrow{\mathbf{#1}}}
\newcommand{\reversed}[1]{\overleftarrow{\mathbf{#1}}}
\newcommand{\weighting}[1]{\mathbf{#1}}
\newcommand{\weightel}[2]{\mathbf{#1}\left({#2}\right)}
\newcommand{\unord}[1]{\mathbf{#1}}
\newcommand{\ordscript}[2]{\ordered{{#1}}_{{#2}}}
\newcommand{\revscript}[2]{\reversed{{#1}}_{{#2}}}

\newcommand{\doublesquig}{%
  \mathrel{%
    \vcenter{\offinterlineskip
      \ialign{##\cr$\rightsquigarrow$\cr\noalign{\kern-1.5pt}$\rightsquigarrow$\cr}%
    }%
  }%
}

\newcommand{\defn}{\emph}

\newcommand\restrict[1]{\raisebox{-.5ex}{$|$}_{#1}}

\newcommand{\probfc}[1]{\mathrm{\mathbb{P}}_{F^{*}}\left[#1\right]}
\newcommand{\probd}[1]{\mathrm{\mathbb{P}}_{D}\left[#1\right]}
\newcommand{\probf}[1]{\mathrm{\mathbb{P}}_{F}\left[#1\right]}
\newcommand{\prob}[1]{\mathrm{\mathbb{P}}\left[#1\right]}
\newcommand{\probb}[1]{\mathrm{\mathbb{P}}_{b}\left[#1\right]}
\newcommand{\expn}[1]{\mathrm{\mathbb{E}}\left[#1\right]}
\newcommand{\expnb}[1]{\mathrm{\mathbb{E}}_{b}\left[#1\right]}
\newcommand{\probxj}[1]{\mathrm{\mathbb{P}}_{x(j)}\left[#1\right]}
\newcommand{\expnxj}[1]{\mathrm{\mathbb{E}}_{x(j)}\left[#1\right]}
\def\gnp{G_{n,p}}
\def\G{\mathcal{G}}
\def\lflr{\left\lfloor}
\def\rflr{\right\rfloor}
\def\lcl{\left\lceil}
\def\rcl{\right\rceil}

\newcommand{\qbinom}[2]{\binom{#1}{#2}_{\!q}}
\newcommand{\binomdim}[2]{\binom{#1}{#2}_{\!\dim}}

\newcommand{\grass}{\mathrm{Gr}}

\newcommand{\brackets}[1]{\left(#1\right)}
\def\sm{\setminus}
\newcommand{\Set}[1]{\{#1\}}
\newcommand{\set}[2]{\{#1\,:\;#2\}}
\newcommand{\krq}[2]{K^{(#1)}_{#2}}
\newcommand{\ind}[1]{$\mathbf{S}(#1)$}
\newcommand{\indcov}[1]{$(\#)_{#1}$}
\def\In{\subseteq}

%

\maketitle
\begin{abstract}
    \vspace{-0.4cm}A subgraph~$H$ of an edge-coloured graph is called rainbow if all of the edges of~$H$ have different colours. In 1989, Andersen conjectured that every proper edge-colouring of~$K_{n}$ admits a rainbow path of length $n-2$. We show that almost all optimal edge-colourings of~$K_{n}$ admit both~(i) a rainbow Hamilton path and~(ii) a rainbow cycle using all of the colours.  This result demonstrates that Andersen's Conjecture holds for almost all optimal edge-colourings of $K_n$ and answers a recent question of Ferber, Jain, and Sudakov.  Our result also has applications to the existence of transversals in random symmetric Latin squares.
\end{abstract}
\section{Introduction}
\subsection{Extremal results on rainbow colourings}
We say that a subgraph~$H$ of an edge-coloured graph is \textit{rainbow} if all of the edges of~$H$ have different colours.  An \textit{optimal edge-colouring} of a graph is a proper edge-colouring using the minimum possible number of colours.
In this paper we study the problem of finding a rainbow Hamilton path in large optimally edge-coloured complete graphs.  

The study of finding rainbow structures within edge-coloured graphs has a rich history.
For example, the problem posed by Euler on finding orthogonal $n\times n$ Latin squares can easily be seen to be equivalent to that of finding an optimal edge-colouring of the complete bipartite graph~$K_{n,n}$ which decomposes into edge-disjoint rainbow perfect matchings.
It transpires that there are optimal colourings of~$K_{n,n}$ without even a single rainbow perfect matching, if~$n$ is even. 
However, an important conjecture, often referred to as the  Ryser-Brualdi-Stein Conjecture, posits that one can always find an almost-perfect rainbow matching, as follows.

\begin{conj}[{Ryser~\cite{R67}, Brualdi-Stein~\cite{BR91,S75}}]\label{RBS}
Every optimal edge-colouring of~$K_{n,n}$ admits a rainbow matching of size~$n-1$ and, if~$n$ is odd, a rainbow perfect matching.
\end{conj}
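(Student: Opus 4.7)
The plan is to prove Conjecture~\ref{RBS} via an absorption-switching strategy tailored to optimal edge-colourings of $K_{n,n}$. Identifying such a colouring with a Latin square $L$, the goal is to find a rainbow matching (equivalently, a partial transversal of $L$) of size $n-1$ in general and of size $n$ when $n$ is odd. I would not attempt to handle both cases uniformly: the parity-sensitive quantity $n-1$ versus $n$ should emerge from a stability dichotomy at the final stage.

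In the first step, I would reserve a random substructure for absorption. Fix a small random set $R$ of rows, together with matching random sets of columns and colours; inside the sub-array they index, locate a large collection $\mathcal{A}$ of vertex-disjoint rainbow alternating cycles of bounded length, each with the \emph{switching property} that toggling between its two rainbow completions exchanges one prescribed row-column-colour incidence for another. These serve as absorbing gadgets. Their construction proceeds by a greedy/probabilistic argument using only that $L$ is a Latin square, and must ensure that every potential leftover row/colour has many eligible gadgets in $\mathcal{A}$.

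The second step is the main bulk construction: on the complement of $R$, apply a semi-random (R\"odl nibble) argument, augmented by self-correcting steps, to build a rainbow matching $M$ covering all but a polylogarithmic set $L^*$ of rows, columns, and colours. In the third step, incorporate the elements of $L^*$ into $M$ via gadgets from $\mathcal{A}$: by the richness of $\mathcal{A}$ established in Step~1, a Hall-type argument on the bipartite graph between $L^*$ and $\mathcal{A}$ supplies a system of disjoint absorbing switchings, and executing them yields the desired rainbow matching of size $n-1$. To upgrade to size $n$ in the odd case, one reruns the Hall step on the single remaining uncovered triple and uses a parity-based counting argument on alternating cycles to rule out obstruction.

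The main obstacle is Step~1: producing a sufficiently rich collection of absorbers in a completely \emph{arbitrary} Latin square, with no pseudorandomness hypothesis. Adversarial sub-structures can subvert the counting bounds that underpin the Hall condition, and, crucially, the extremal examples for even $n$ (such as cyclic Latin squares) show that one cannot hope for the full transversal without using parity. The argument must therefore incorporate a \emph{stability-versus-extremal dichotomy}: either $L$ admits abundant absorbers of every type (in which case the absorption argument closes the gap to $n-1$, and to $n$ when $n$ is odd), or $L$ is close to a rigid configuration such as a group table, in which case a direct construction furnishes the required transversal and forces the correct parity behaviour. Establishing this dichotomy, and in particular showing that in the odd-$n$ case the structured alternative still admits a full rainbow matching, is the crux of the problem.
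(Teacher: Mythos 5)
The statement you are addressing is stated in the paper as a \emph{conjecture} (the Ryser--Brualdi--Stein Conjecture); the paper offers no proof of it, and indeed it remains open. The strongest known result in this direction, cited in the paper, is a rainbow matching of size $n-O(\log n/\log\log n)$ for arbitrary optimal colourings of $K_{n,n}$ (Keevash--Pokrovskiy--Sudakov--Yepremyan). Your proposal is therefore not comparable to ``the paper's proof'' --- there is none --- and it does not itself constitute a proof: it is a programme whose decisive step you yourself flag as unresolved.

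The genuine gap is exactly the one you identify as ``the main obstacle,'' and it is not a technical loose end but the heart of why the conjecture is open. Your Steps 2 and 3 (nibble plus absorption via a Hall-type argument) are standard and would go through \emph{given} Step 1, but Step 1 requires a rich, well-spread family of absorbing gadgets in a completely arbitrary Latin square. No argument is known that produces such a family without a pseudorandomness or randomness hypothesis; this paper, for instance, only establishes the analogous property ($\mu$-robust gadget-resilience, Definition~\ref{spread}) for \emph{almost all} $1$-factorizations, via a delicate switching analysis in the spaces $\cG_{D}^{\text{col}}$ combined with enumeration results of Kahn--Lov\'asz and Ferber--Jain--Sudakov --- a method that inherently yields ``with high probability'' statements and cannot certify every colouring. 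Your proposed remedy, a stability-versus-extremal dichotomy, is asserted rather than argued: you give no structural characterization of the Latin squares that lack absorbers, no proof that such squares are close to group tables, and no construction of transversals in the structured case. Likewise, the ``parity-based counting argument on alternating cycles'' that is supposed to upgrade $n-1$ to $n$ for odd $n$ is not specified; the Delta-lemma obstruction for cyclic squares of even order does not dualize into a positive argument for odd order. As written, the proposal reduces the conjecture to two unproved claims that are each at least as hard as the conjecture itself.
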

Currently, the strongest result towards this conjecture for arbitrary optimal edge-colourings is due to Keevash, Pokrovskiy, Sudakov, and Yepremyan~\cite{KPSY20}, who showed that there is always a rainbow matching of size $n-O(\log n / \log \log n)$.
This result improved earlier bounds of Woolbright~\cite{W78}, Brouwer, de Vries, and Wieringa~\cite{BdVW78}, and Hatami and Shor~\cite{HS08}.

It is natural to search for spanning rainbow structures in the non-partite setting as well; that is, what spanning rainbow substructures can be found in properly edge-coloured complete graphs~$K_{n}$?  
It is clear that one can always find a rainbow spanning tree -- indeed, simply take the star rooted at any vertex.
Kaneko, Kano, and Suzuki~\cite{KKS02} conjectured that for $n>4$, in any proper edge-colouring of~$K_{n}$, one can find~$\lfloor n/2\rfloor$ edge-disjoint rainbow spanning trees, thus decomposing~$K_{n}$ if~$n$ is even, and almost decomposing~$K_{n}$ if~$n$ is odd.
This conjecture was recently proved approximately by Montgomery, Pokrovskiy, and Sudakov~\cite{MPS19}, who showed that in any properly edge-coloured~$K_{n}$, one can find $(1-o(1))n/2$ edge-disjoint rainbow spanning trees.

For \textit{optimal} edge-colourings, even more is known.
Note firstly that if~$n$ is even and~$K_{n}$ is optimally edge-coloured, then the colour classes form a $1$\textit{-factorization} of~$K_{n}$; that is, a decomposition of~$K_{n}$ into perfect matchings.
Throughout the paper, we will use the term $1$-factorization synonymously with an edge-colouring whose colour classes form a $1$-factorization.
It is clear that if a $1$-factorization of~$K_{n}$ exists, then~$n$ is even.
Very recently, Glock, K\"{u}hn, Montgomery, and Osthus~\cite{GKMO20} showed that for sufficiently large even~$n$, there exists a tree~$T$ on~$n$ vertices such that any $1$-factorization of~$K_{n}$ decomposes into edge-disjoint rainbow spanning trees isomorphic to~$T$, thus resolving conjectures of Brualdi and Hollingsworth~\cite{BH96}, and Constantine~\cite{C02,C05}.
See e.g.~\cite{PS18,MPS19,KKKO20} for previous work on these conjectures.

The tree~$T$ used in~\cite{GKMO20} is a path of length $n-o(n)$, together with~$o(n)$ short paths attached to it. 
Thus it might seem natural to ask if one can find a rainbow Hamilton path in any $1$-factorization of~$K_{n}$.  Note that such a path would contain all of the colours used in the 1-factorization, so it is not possible to find a rainbow Hamilton cycle in a 1-factorization of $K_n$.  However, in 1984 Maamoun and Meyniel~\cite{MM84} 
proved the existence of a $1$-factorization of~$K_{n}$ (for~$n\geq4$ being any power of~$2$) without a rainbow Hamilton path.  
Sharing parallels with Conjecture~\ref{RBS} for the non-partite setting, Andersen~\cite{A89} conjectured in 1989 that all proper edge-colourings of~$K_{n}$ admit a rainbow path which omits only one vertex.

\begin{conj}[{Andersen~\cite{A89}}]\label{Andersen}
All proper edge-colourings of~$K_{n}$ admit a rainbow path of length $n-2$.
\end{conj}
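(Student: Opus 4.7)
The plan is to attack Andersen's conjecture via the absorbing method, which has become the standard approach to spanning (or nearly-spanning) rainbow substructures in properly edge-coloured graphs (see for instance the techniques used in \cite{MPS19,KPSY20}). The target rainbow path of length $n-2$ will be constructed in three stages: (i) reserve a short \emph{absorbing} rainbow subpath $P_{\mathrm{abs}}$ on a small vertex set~$A$, with the property that given any small ``leftover'' set $R$ of vertices disjoint from $A$ and any small set of available colours, $P_{\mathrm{abs}}$ can be re-routed inside $A\cup R$ into a rainbow path incorporating~$R$; (ii) build a long rainbow path $P_{\mathrm{main}}$ covering almost all vertices outside $A$, using only colours not occurring on $P_{\mathrm{abs}}$; (iii) concatenate $P_{\mathrm{main}}$ with $P_{\mathrm{abs}}$ at designated endpoints and invoke the absorber on the uncovered vertices (all but at most one).

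For stage (i), the key step is to design a local absorber gadget for each pair $(v,c)$, where $v\in V(K_n)\setminus V(P_{\mathrm{abs}})$ and $c$ is a potentially leftover colour: a short rainbow path (say on five or seven vertices) with specified internal colour pattern that permits a local switch inserting $v$ and using~$c$ in place of two or three currently used colours. Here one exploits only that the colouring is \emph{proper}: by counting short paths through any fixed vertex and comparing colour-classes, one shows that each $(v,c)$ has $\Omega(n^{2})$ candidate absorbers in $K_n$, then picks a random subfamily via the Lovász Local Lemma or the nibble so that every $(v,c)$ is covered many times while the chosen gadgets pairwise share no vertices or colours; standard chaining links them into $P_{\mathrm{abs}}$.

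For stage (ii), one builds $P_{\mathrm{main}}$ by a rainbow version of the rotation-extension method. Start with any rainbow path on the remaining vertices, use the many available colours to greedily extend, and when extension stalls apply Pósa-type rotations that swap the last edge for an edge of a colour not already on the path. Analysing the resulting ``endpoint-set'' as in the proof of the Keevash--Pokrovskiy--Sudakov--Yepremyan bound, one argues that after a bounded number of rotation rounds the set of possible rainbow-path endpoints on the same vertex set has size $\Omega(n)$, so with overwhelming probability one such endpoint lies adjacent to a designated entry vertex of $P_{\mathrm{abs}}$ via an edge whose colour is unused, yielding the concatenation.

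The \textbf{main obstacle}, and the reason Andersen's conjecture is still open, is the interaction of the vertex constraint and the colour constraint during both rotations and absorption. A single rainbow rotation can force a cascade of colour conflicts along the whole path, and the absorber of stage (i) must tolerate \emph{any} proper colouring, not merely a near-$1$-factorization with the strong pseudorandomness exploited in the present paper. Concretely, the leftover set after stage (ii) may contain $\omega(\log n/\log\log n)$ vertices, and it is not known how to design an absorber in a general proper colouring that copes with such a large deficit; this is precisely the barrier that the current $n - O(\log n/\log\log n)$ bounds run into. Overcoming it would likely require either a genuinely new class of ``colour-local'' absorbers whose activation uses only colours appearing on a short, controllable segment of $P_{\mathrm{main}}$, or a structural dichotomy showing that any proper colouring avoiding many absorbers must itself be close to a specific rigid structure (such as the Cayley-table colouring of $K_n$ where Maamoun and Meyniel found $1$-factorizations with no rainbow Hamilton path) that can then be handled separately.
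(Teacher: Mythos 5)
The statement you were asked about is Andersen's Conjecture, which the paper does not prove: it is stated as an open conjecture, and the paper's main theorem only establishes it for \emph{almost all} optimal edge-colourings of $K_n$ (i.e.\ for a uniformly random $1$-factorization, with high probability), not for every proper edge-colouring. So there is no proof in the paper to compare against, and your proposal is not a proof either --- as you yourself acknowledge in your final paragraph, the central step (an absorbing structure that works in an \emph{arbitrary} proper colouring and can swallow a leftover of unbounded size) is exactly the missing ingredient. A plan whose key lemma is conceded to be unknown is a research programme, not an argument, and it cannot be accepted as a proof of the statement.

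That said, your sketch does track the architecture the paper uses for its weaker, probabilistic result: $(v,c)$-absorbing gadgets assembled via a robustly matchable template, a long rainbow path on the non-reserved vertices and colours, and a covering/absorbing step to finish. The crucial difference is that the paper buys the existence and well-spreadness of these gadgets from \emph{randomness} of the colouring --- the switching arguments of Section 6 show that almost all $1$-factorizations are $\eps$-locally edge-resilient and $\mu$-robustly gadget-resilient, and the Maamoun--Meyniel construction shows some colourings genuinely lack the structure your stage (i) requires (at least for the stronger Hamilton-path variant). Your stage (ii) also quietly imports probabilistic language (``with overwhelming probability'') into what must be a deterministic argument about a worst-case colouring. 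If you want a correct statement to prove along these lines, prove Theorem 1.3 of the paper; if you want to attack Conjecture 1.2 itself, the gap you must close is a deterministic replacement for the gadget-resilience property, and nothing in your proposal supplies it.
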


Several variations of Andersen's Conjecture have been proposed.  In 2007, Akbari, Etesami, Mahini, and Mahmoody~\cite{AEMM07} conjectured that all $1$-factorizations of~$K_{n}$ admit a Hamilton cycle whose edges collectively have at least $n-2$ colours.  They  also conjectured that all $1$-factorizations of~$K_{n}$ admit a rainbow cycle omitting only two vertices.

Although now known to be false, the following stronger form of Conjecture~\ref{Andersen} involving the `sub-Ramsey number' of the Hamilton path was proposed by Hahn~\cite{H80}.  Every (not necessarily proper) edge-colouring of~$K_n$ with at most~$n/2$ edges of each colour admits a rainbow Hamilton path.  In light of the aforementioned construction of Maamoun and Meyniel~\cite{MM84}, in 1986 Hahn and Thomassen~\cite{HT86} suggested the following slightly weaker form of Hahn's Conjecture, that all edge-colourings of~$K_{n}$ with strictly fewer than~$n/2$ edges of each colour admit a rainbow Hamilton path.  However, even this weakening of Hahn's Conjecture is false -- Pokrovskiy and Sudakov~\cite{PS19} proved the existence of such edge-colourings of $K_n$ in which the longest rainbow Hamilton path has length at most $n - \ln n / 42$.  

Andersen's Conjecture has led to a number of results, generally focussing on increasing the length of the rainbow path or cycle that one can find in an arbitrary $1$-factorization or proper edge-colouring of~$K_{n}$ (see e.g.~\cite{CL15, GM10, GM12, GRSS11}).
Alon, Pokrovskiy, and Sudakov~\cite{APS17} proved that all proper edge-colourings of~$K_{n}$ admit a rainbow path with length $n-O(n^{3/4})$, and the error bound has since been improved to $O(\sqrt{n}\cdot\log n)$ by Balogh and Molla~\cite{BM19}.
Further support for Conjecture~\ref{Andersen} and its variants was provided by Montgomery, Pokrovskiy, and Sudakov~\cite{MPS19} as well as Kim, K\" uhn, Kupavskii, and Osthus~\cite{KKKO20}, who showed that if we consider proper edge-colourings where no colour class is larger than $n/2 - o(n)$, then we can even find $n/2 -o(n)$ edge-disjoint rainbow Hamilton cycles.

\subsection{Random colourings}
It is natural to consider these problems in a probabilistic setting, that is to consider random edge-colourings as well as random Latin squares.  However, the `rigidity' of the underlying structure makes these probability spaces very challenging to analyse.  Recently significant progress was made by Kwan~\cite{K16}, who showed that almost all Latin squares contain a transversal, or equivalently, that almost all optimal edge-colourings of $K_{n,n}$ admit a rainbow perfect matching.  His analysis was carried out in a hypergraph setting, which also yields the result that almost all Steiner triple systems contain a perfect matching.  Recently, this latter result was strengthened by Ferber and Kwan~\cite{FK20}, who showed that almost all Steiner triple systems have an approximate decomposition into edge-disjoint perfect matchings.  Here we show that Hahn's original conjecture (and thus Andersen's Conjecture as well) holds for almost all 1-factorizations, answering a recent question of Ferber, Jain, and Sudakov~\cite{FJS20}.  In what follows, we say a property holds
`with high probability' if it holds with a probability that tends to~$1$ as the number of vertices~$n$ tends to infinity.

\begin{theorem}\label{mainthm}
Let~$\phi$ be a uniformly random optimal edge-colouring of~$K_{n}$.
Then with high probability,
\begin{enumerate}[(i)]
\item\label{mainthm:rainbow-path}~$\phi$ admits a rainbow Hamilton path, and
\item\label{mainthm:rainbow-cycle}~$\phi$ admits a rainbow cycle $F$ containing all of the colours.
\end{enumerate}
In particular, if~$n$ is odd, then $F$ is a rainbow Hamilton cycle.
\end{theorem}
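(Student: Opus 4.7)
The plan is to adapt the absorbing method to this random setting. Write $\phi$ for the random colouring; when $n$ is even, $\phi$ is a uniformly random $1$-factorization of $K_n$ (with $n-1$ colours), and when $n$ is odd it is a uniformly random ``near-$1$-factorization'' (with $n$ colours, each class an almost-perfect matching missing exactly one vertex, with each vertex missed exactly once). In both cases I would first prove that $\phi$ is pseudorandom with high probability, and then use this pseudorandomness to assemble a rainbow absorber, an approximate rainbow cover, and a family of rainbow connectors that together form the desired path or cycle.

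The central technical task, and the main obstacle, is establishing the pseudorandomness. Concretely, I would show that with high probability, for every constant $k$, the union of any $k$ colour classes of $\phi$ looks like a random union of $k$ independent (near-)perfect matchings, with local counting statistics concentrated around their expectations; in particular, this should give the expected count of rainbow copies of any fixed small subgraph with prescribed endpoints and colours. The natural tool is the switching method: define local modifications of $\phi$ (swapping a $4$-cycle between two colour classes, re-routing a $2$-coloured cycle, etc.) that preserve the property of being a $1$-factorization, and bound the number of configurations containing a given ``bad'' substructure by showing that each such configuration admits strictly more switching moves out than in. This estimation is delicate precisely because the uniform distribution on $1$-factorizations has almost no symmetry beyond vertex and colour permutations, so every estimate must be extracted indirectly; all subsequent steps use $\phi$ only through these statistics.

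Given pseudorandomness, I would construct a rainbow ``absorbing path'' $P_A$ of length $o(n)$ using a colour set $C_A$ of size $|V(P_A)|-1$, with the property that for every sufficiently small pair $(S,D)$ with $S\subseteq V(K_n)\setminus V(P_A)$, $D$ a set of colours disjoint from $C_A$, and $|S|=|D|$, there is a rainbow path on $V(P_A)\cup S$ using exactly the colours $C_A\cup D$. This is built by first locating, via the pseudorandom subgraph counts, many ``absorbing gadgets'' for each vertex--colour pair, and then gluing them into a single path by a random greedy procedure. Outside $V(P_A)$ I would then cover almost all vertices by long vertex-disjoint rainbow paths using colours outside $C_A$, using a R\"odl-nibble argument on the hypergraph whose hyperedges encode short rainbow paths; the pseudo-degree conditions are verified via the estimates from the first step. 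Short rainbow connectors, again counted by pseudorandomness, link the cover into a single rainbow path that omits only $o(n)$ vertices and colours, and the absorbing property of $P_A$ finishes the rainbow Hamilton path, proving~(i).

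For~(ii) I would run essentially the same argument but prescribe two endpoints $u,v$ for $P_A$, and arrange both the nibble cover and the connectors so that the resulting rainbow path can be closed through $u$ and $v$ using the final unused colour. When $n$ is odd this closes into a rainbow Hamilton cycle on $n$ vertices using all $n$ colours; when $n$ is even it yields a rainbow cycle of length $n-1$ through all $n-1$ colours and omitting the single vertex that is necessarily left out, since no rainbow Hamilton cycle can exist in a $1$-factorization of $K_n$.
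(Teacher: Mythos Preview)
Your high-level strategy---switching for pseudorandomness, absorption via gadgets, a nibble-type covering step, and connectors---matches the paper's architecture closely. However, there is a genuine technical gap in your switching step. You propose to perform switchings directly on $1$-factorizations of $K_n$ (``swapping a $4$-cycle between two colour classes, re-routing a $2$-coloured cycle, etc.''). The paper explicitly notes that $1$-factorizations are too rigid for this: local changes propagate globally, and the switching graphs one would need are hard to control. The paper's key idea here is to switch not on full $1$-factorizations but on \emph{partial} ones, i.e.\ on $\mathcal{G}_D^{\mathrm{col}}$ for a small colour set $D\subsetneq[n-1]$ of size $\Theta(\mu n)$ or $\Theta(\eps n)$, where there is enough slack to make genuinely local moves. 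One then transfers the probability estimates back to the full space via a ``weighting factor'' (Corollary~\ref{wf}), which compares the number of completions of any two partial factorizations using the Kahn--Lov\'asz upper bound and the Ferber--Jain--Sudakov lower bound on $1$-factorization counts. Without this device, your switching argument has no clear way to get off the ground.

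A second, smaller point: you aim for a rather strong pseudorandomness statement (concentration of counts of arbitrary small rainbow subgraphs). The paper does not attempt this and instead proves only two tailored properties---local edge-resilience and the existence of a well-spread family of $(v,c)$-absorbing gadgets---each via its own switching. The gadget count in particular requires a bespoke ``twist'' switching with careful bookkeeping (distinguishability, saturation) to ensure each switch changes the count by exactly one. Your proposal to obtain general subgraph statistics in one stroke is more ambitious and would need substantially more work. Finally, the paper handles odd $n$ not by rerunning the argument but by the standard bijection adding a dummy vertex to reduce to even $n$; this is simpler than your parallel treatment.
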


As discussed in Section~\ref{corollary-section}, there is a well-known correspondence between rainbow 2-factors in $n$-edge-colourings of $K_n$ and transversals in symmetric Latin squares, as a transversal in a Latin square corresponds to a permutation $\sigma$ of $[n]$ such that the entries in positions $(i, \sigma(i))$ are distinct for all $i\in [n]$.  Based on this, we use Theorem~\ref{mainthm}\ref{mainthm:rainbow-cycle} to show that random symmetric Latin squares of odd order contain a \textit{Hamilton transversal} with high probability.  Here we say a transversal is \textit{Hamilton} if the underlying permutation $\sigma$ is an $n$-cycle.

\begin{cor}\label{oddsym}
Let~$n$ be an odd integer and~$\mathbf{L}$ a uniformly random symmetric~$n\times n$ Latin square.
Then with high probability~$\mathbf{L}$ contains a Hamilton transversal.
\end{cor}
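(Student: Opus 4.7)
The plan is to deduce Corollary~\ref{oddsym} directly from Theorem~\ref{mainthm}(ii) via the canonical bijection between symmetric $n \times n$ Latin squares of odd order and optimal edge-colourings of~$K_n$, under which Hamilton transversals translate into rainbow Hamilton cycles.

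The first step is to set up the correspondence. Given a symmetric Latin square $\mathbf{L}$ of odd order~$n$ on symbol set $[n]$, define an edge-colouring $\phi$ of $K_n$ (on vertex set $[n]$) by $\phi(ij) := \mathbf{L}(i,j) = \mathbf{L}(j,i)$ for every edge $ij$. Since each row of $\mathbf{L}$ is a permutation of $[n]$, the map $\phi$ is a proper edge-colouring of $K_n$ using $n$ colours; as $\chi'(K_n) = n$ when $n$ is odd, it is optimal. In the reverse direction, for $n$ odd every optimal edge-colouring of $K_n$ has $n$ colour classes of average size $(n-1)/2$, yet each is a matching of size at most $\lfloor n/2 \rfloor = (n-1)/2$, so in fact every colour class has size exactly $(n-1)/2$ and thus misses a unique vertex $v_c$, with $c \mapsto v_c$ a bijection. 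Setting $\mathbf{L}(v_c, v_c) := c$ for each $c$ reconstructs the diagonal and hence $\mathbf{L}$. The two constructions are mutual inverses, so a uniformly random symmetric Latin square of odd order~$n$ corresponds to a uniformly random optimal edge-colouring of~$K_n$.

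The second step is to apply Theorem~\ref{mainthm}(ii) to the resulting random colouring $\phi$: with high probability $\phi$ admits a rainbow cycle $F$ using all $n$ colours, which as noted in Theorem~\ref{mainthm} must be a Hamilton cycle since $n$ is odd. Orient $F$ arbitrarily as $v_1 \to v_2 \to \cdots \to v_n \to v_1$ and define $\sigma \in S_n$ by $\sigma(v_i) := v_{i+1}$ (indices mod $n$). Then $\sigma$ is an $n$-cycle and $\mathbf{L}(v_i, \sigma(v_i)) = \phi(v_i v_{i+1})$ for each $i$; these values are precisely the colours along~$F$, which are pairwise distinct because $F$ is rainbow. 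Hence $\{\mathbf{L}(v_i, \sigma(v_i)) : i \in [n]\} = [n]$, so $\sigma$ witnesses a Hamilton transversal of $\mathbf{L}$.

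Essentially all the non-trivial content is packaged inside Theorem~\ref{mainthm}(ii); the remaining work is the routine unpacking of the Latin-square/colouring bijection together with the observation that an oriented rainbow Hamilton cycle encodes an $n$-cycle permutation whose induced sequence of Latin square entries is rainbow. The only subtle point to keep in mind — which is the reason for restricting to odd $n$ — is that every colour class of an optimal edge-colouring of $K_n$ is a near-perfect matching, so that the diagonal is canonically recovered from the off-diagonal part; no further obstacle is anticipated.
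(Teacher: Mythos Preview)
Your proof is correct and follows essentially the same approach as the paper: both set up the standard bijection between symmetric Latin squares of odd order and optimal edge-colourings of $K_n$, apply Theorem~\ref{mainthm}\ref{mainthm:rainbow-cycle} to obtain a rainbow Hamilton cycle, and then read off a Hamilton transversal from an orientation of that cycle. The paper phrases the last step via an auxiliary looped graph $K_n^\circ$ whose rainbow $2$-factors correspond to transversals, but this is just a repackaging of your direct argument.
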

Further results on random Latin squares were recently obtained by Kwan and Sudakov~\cite{KS18}, who gave estimates on the number of intercalates in a random Latin square as well as their likely discrepancy.  After the completion of the initial version of this paper, additional results on intercalates in random Latin squares were obtained by Kwan, Sah, and Sawhney~\cite{KSS21}, which, together with the results of~\cite{KS18}, resolve an old conjecture of McKay and Wanless~\cite{MW99}.  In addition, Gould and Kelly~\cite{GK21} showed that an analogue of Corollary~\ref{oddsym} also holds when $\mathbf{L}$ is a uniformly random (not necessarily symmetric) $n \times n$ Latin square, strengthening the aforementioned result of Kwan~\cite{K16}.

\section{Notation}
In this section, we collect some definitions and notation that we will use throughout the paper.

For a graph~$G$ and (not necessarily distinct) vertex sets $A,B\subseteq V(G)$, we define~$E_{G}(A,B)\coloneqq\{e=ab\in E(G)\colon a\in A, b\in B\}$.
We often simply write~$E(A,B)$ when~$G$ is clear from the context.
We define $e(A,B)\coloneqq |E(A,B)|$.
For a vertex $v\in V(G)$, we define~$\partial_{G}(v)$ to be the set of edges of~$G$ which are incident to~$v$.
For a proper colouring $\phi\colon E(G)\rightarrow\bN$ and a colour $c\in\bN$, we define $E_{c}(G)\coloneqq\{e\in E(G)\colon \phi(e)=c\}$ and say that an edge $e\in E_{c}(G)$ is a $c$\textit{-edge} of~$G$.
For a vertex $v\in V(G)$, if~$e$ is a $c$-edge in~$G$ incident to~$v$, then we say that the non-$v$ endpoint of~$e$ is the\COMMENT{Unique since~$\phi$ proper.} $c$\textit{-neighbour} of~$v$.
For a vertex~$v\in V(G)$ and three colours $c_{1}, c_{2}, c_{3}\in\bN$, we say that the $c_{3}$-neighbour of the $c_{2}$-neighbour of the $c_{1}$-neighbour of~$v$ is the \textit{end} of the $c_{1}c_{2}c_{3}$\textit{-walk starting at}~$v$, if all such edges exist.\COMMENT{Again unique since~$\phi$ is proper.}
For a set of colours $D\subseteq\bN$, we define~$N_{D}(v)\coloneqq\{w\in N_{G}(v)\colon \phi(vw)\in D\}$.
For sets $A,B\subseteq V(G)$ and a colour $c\in\bN$, we define $E_{c}(A,B)\coloneqq\{e\in E(A,B)\colon\phi(e)=c\}$.
If~$G$ is not clear from the context, we sometimes also write~$E_{G}^{c}(A,B)$.
For any subgraph $H\subseteq G$, we define $\phi(H)\coloneqq\{\phi(e)\colon e\in E(H)\}$.
For a set of colours $D\subseteq [n-1]$, let~$\cG_{D}^{\text{col}}$ be the set of pairs~$(G, \phi_{G})$, where~$G$ is a $|D|$-regular graph on a vertex set~$V$ of size~$n$, and~$\phi_{G}$ is a $1$-factorization of~$G$ with colour set~$D$.  
Often, we abuse notation and write $G\in\cG_{D}^{\text{col}}$, and in this case we let~$\phi_{G}$ denote the implicit $1$-factorization of~$G$, sometimes simply writing~$\phi$ when~$G$ is clear from the context.
For $G\in\cG_{[n-1]}^{\text{col}}$ and a set of colours $D\subseteq [n-1]$, we define the \textit{restriction of}~$G$ \textit{to}~$D$, denoted~$G|_{D}$, to be the spanning subgraph of~$G$ containing precisely those edges of~$G$ which have colour in~$D$.
Observe that $G|_{D}\in\cG_{D}^{\text{col}}$.
A subgraph $H\subseteq G\in \cG_{D}^{\text{col}}$ inherits the colours of its edges from~$G$.
Observe that uniformly randomly choosing a $1$-factorization~$\phi$ of~$K_{n}$ on vertex set~$V$ and colour set~$[n-1]$ is equivalent to uniformly randomly choosing $G\in\cG_{[n-1]}^{\text{col}}$.
For any $D\subseteq [n-1]$, $G\in\cG_{D}^{\text{col}}$, and sets $V'\subseteq V$, $D'\subseteq D$, we define $E_{V', D'}(G)\coloneqq\{e=xy\in E(G)\colon \phi_{G}(e)\in D', x,y\in V'\}$, and we define $e_{V', D'}(G)\coloneqq |E_{V',D'}(G)|$.
For a hypergraph~$\cH$, we write~$\Delta^{c}(\cH)$ to denote the maximum codegree of~$\cH$; that is, the maximum number of edges containing any two fixed vertices of~$\cH$.

For a set~$D$ of size~$n$ and a partition~$\cP$ of~$D$ into~$m$ parts, we say that~$\cP$ is \textit{equitable} to mean that all parts~$P$ of~$\cP$ satisfy $|P|\in\{\lfloor n/m\rfloor, \lceil n/m\rceil\}$, and when it does not affect the argument, we will assume all parts of an equitable partition have size precisely\COMMENT{Feels a bit cheeky}~$n/m$.
For a set $S$ and a real number $p \in [0, 1]$, a \textit{$p$-random} subset $T\subseteq S$ is a random subset in which each element of $S$ is included in $T$ independently with probability $p$.
A \textit{$\beta$-random} subgraph of a graph~$G$ is a spanning subgraph of~$G$ where the edge-set is a $\beta$-random subset of~$E(G)$.
For an event~$\cE$ in any probability space, we write~$\overline{\cE}$ to denote the complement of~$\cE$.
For real numbers $a,b,c$ such that $b>0$, we write $a=(1\pm b)c$ to mean that the inequality $(1-b)c\leq a\leq (1+b)c$ holds.
For a natural number $n\in\bN$, we define $[n]\coloneqq\{1,2,\dots, n\}$, and $[n]_{0}\coloneqq [n]\cup\{0\}$.
We write~$x\ll y$ to mean that for any $y\in (0,1]$ there exists an $x_{0}\in (0,1)$ such that for all $0<x\leq x_{0}$ the subsequent statement holds.
Hierarchies with more constants are defined similarly and should be read from the right to the left.
Constants in hierarchies will always be real numbers in $(0,1]$.
We assume large numbers to be integers if this does not affect the argument.

\section{Overview of the proof}\label{overview}

In this section, we provide an overview of the proof of Theorem~\ref{mainthm}.  In Section~\ref{sketch} we prove Theorem~\ref{mainthm} in the case when~$n$ is even assuming two key lemmas which we prove in later sections.
In particular, we assume that~$n$ is even in Sections~\ref{overview}--\ref{switching-section}, so that the optimal edge-colouring~$\phi$ we work with is a $1$-factorization of~$K_{n}$.
In Section~\ref{corollary-section} we derive Theorem~\ref{mainthm} in the case when~$n$ is odd from the case when $n$ is even.
We will also deduce Corollary~\ref{oddsym} from Theorem~\ref{mainthm}\ref{mainthm:rainbow-cycle} in Section~\ref{corollary-section}.
Throughout the proof we work with constants $\eps, \gamma, \eta,$ and $\mu$ satisfying the following hierarchy:\COMMENT{The relationship $\eps \ll \gamma$ depends in part on the Pippenger-Spencer Theorem (Theorem~\ref{hypergraph-matching-thm}).  The relationships $\gamma \ll \eta$ and $\eta \ll \mu$ could simply be written as polynomial relationships, where these polynomials are evident from our proof.}
\begin{equation}\label{constant-heirarchy}
  1/n \ll \eps \ll \gamma \ll \eta \ll \mu \ll 1.
\end{equation}
Our proof uses the absorption method as well as switching techniques.
Note that the latter is a significant difference to~\cite{K16,FK20}, which rely on the analysis of the random greedy triangle removal process, as well as modifications of arguments in~\cite{LL13,Kee18} which bound the number of Steiner triple systems.
Our main objective is to show that with high probability, in a random 1-factorization, we can find an absorbing structure inside a random subset of $\Theta(\mu n)$ reserved vertices, using a random subset of $\Theta(\mu n)$ reserved colours. A recent result~\cite[Lemma 16]{GKMO20}, based on hypergraph matchings, enables us to find a long rainbow path avoiding these reserved vertices and colours, and using our absorbing structure, we extend this path to a rainbow Hamilton path.
More precisely, we randomly `reserve' $\Theta(\mu n)$ vertices and colours and show that with high probability we can find an absorbing structure.
This absorbing structure consists of a subgraph~$G_{\text{abs}}$ containing only reserved vertices and colours and all but at most $\gamma n$ of them.
Moreover~$G_{\text{abs}}$ contains `flexible' sets of vertices and colours $V_{\mathrm{flex}}$ and $C_{\mathrm{flex}}$ each of size $\eta n$, with the following crucial property:
  \begin{enumerate}[($\dagger$)]
  \item for any pair of equal-sized subsets $X\subseteq V_{\mathrm{flex}}$ and $Y\subseteq C_{\mathrm{flex}}$ of size at most $\eta n/2$, the graph $G_{\text{abs}} - X$ contains a spanning rainbow path whose colours avoid~$Y$.\label{crucial-absorber-property}
  \end{enumerate}
  In fact, this spanning rainbow path has the same end vertices, regardless of the choice of $X$ and $Y$.  Given this absorbing structure, we find a rainbow Hamilton path in the following three steps:
\begin{enumerate}
\item \textbf{Long path step:}
  Apply~\cite[Lemma 16]{GKMO20} to obtain a long rainbow path $P_1$ containing only non-reserved vertices and colours.  Moreover, $P_1$ contains all but at most $\gamma n$ of them.
\item\label{covering-step} \textbf{Covering step:}
  `Cover' the vertices and colours not in $G_{\text{abs}}$ or $P_1$ using the flexible sets, by greedily constructing a path $P_2$ containing them as well as sets $X\subseteq V_{\mathrm{flex}}$ and $Y\subseteq C_{\mathrm{flex}}$ of size at most $\eta n / 2$.
\item \textbf{Absorbing step:}
  `Absorb' the remaining vertices and colours, by letting $P_3$ be the rainbow path guaranteed by~\ref{crucial-absorber-property}.
\end{enumerate}
In the covering step, we can ensure that $P_2$ shares one end with $P_1$ and one end with $P_3$ so that $P_1\cup P_2 \cup P_3$ is a rainbow Hamilton path, as desired.  These steps are fairly straightforward, so the majority of the paper is devoted to building the absorbing structure, that is, the subgraph $G_{\mathrm{abs}}$ which satisfies \ref{crucial-absorber-property} with respect to `flexible' sets $V_{\mathrm{flex}}$ and $C_{\mathrm{flex}}$.  This argument is split into two parts.  Lemma~\ref{main-absorber-lemma}, proved in Section~\ref{absorption-section}, asserts that, subject to some quasirandomness conditions, we can build our absorbing structure using our randomly reserved vertices and colours; Lemma~\ref{main-switching-lemma}, proved in Section~\ref{switching-section}, asserts that a typical $1$-factorization of $K_n$ has these quasirandom properties.

\subsection{Absorption}\label{absorption-details-section}

To design our absorbing structure, we employ a strategy sometimes called `distributed absorption', first introduced by Montgomery~\cite{M18}.  The details of this are presented in Section~\ref{sketch}, but we provide an overview now.  Our absorbing structure consists of many `gadgets' pieced together in a particular way.  In particular, for a vertex $v$ and colour $c$, a \textit{$(v, c)$-absorber} (see Definition~\ref{def:absorbing-gadget} and Figure~\ref{(v,c)-absorber-fig}) is a small subgraph containing both $v$ and an edge coloured $c$, with the following property: It contains a rainbow path which is spanning and which uses one of each colour assigned to its edges, and it also contains a rainbow path which includes all of its vertices except $v$ and an edge of every one if its colours except $c$; moreover, these paths have the same end vertices.  We refer to the former path as the \textit{$(v, c)$-absorbing path} and the latter as the \textit{$(v, c)$-avoiding path} (again, see Definition~\ref{def:absorbing-gadget}).  

\begin{figure}
  \centering
  \begin{tikzpicture}[scale = 1.5]
    \tikzstyle{vtx}=[draw, fill, circle, scale=.5];
    \tikzstyle{link}=[snake=zigzag];

    \node[label=above:{\Large $v$}, vtx] (v) at (0, 0) {};

    \node[vtx] (t1) at ($(v) + (30:1)$) {};
    \node[vtx] (b1) at ($(v) + (-30:1)$) {};

    \node[vtx] (t2) at ($(t1) + (1, 0)$) {};
    \node[vtx] (b2) at ($(b1) + (1, 0)$) {};

    \node[vtx] (t3) at ($(t2) + (1, 0)$) {};
    \node[vtx] (b3) at ($(b2) + (1, 0)$) {};
    \node[above] at (0.4,0.21) {$1$};
    \node[below] at (0.4,-0.21) {$2$};
    \node[right] at (0.82,0) {$3$};
    \node[left] at (1.9,0) {$3$};
    \node[above] at (2.35,0.47) {$1$};
    \node[below] at (2.35,-0.47) {$2$};
    \node[right] at (2.84,0) {$c$};

    \draw[ultra thick, OwlRed] (v) -- (t1);
    \draw[ultra thick, OwlGreen] (t1) -- (b1);
    \draw[ultra thick, OwlBlue] (b1)-- (v);

    \draw[ultra thick, OwlGreen] (t2) -- (b2);
    \draw[ultra thick, OwlRed] (t2) -- (t3);
    \draw[ultra thick, OwlBlue] (b2) -- (b3);
    \draw[ultra thick, OwlViolet] (t3) -- (b3);

    \draw[link] (t1) -- (b2);
    \draw[link] (t2) -- (b3);
  \end{tikzpicture}
  \caption{A $(v, c)$-absorber, where $\phi(e_i) = i$.  The paths $P_1$ and $P_2$ are drawn as zigzags.}
  \label{(v,c)-absorber-fig}
\end{figure}
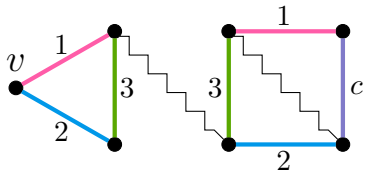


We build our absorbing structure out of $(v, c)$-absorbers, along with short rainbow paths linking them together, using an auxiliary bipartite graph $H$ as a template (see Definition~\ref{def:absorber} and Figure~\ref{absorber-fig}), where one part of $H$ is a set of vertices (including $V_{\mathrm{flex}}$) and the other part is a set of colours (including $C_{\mathrm{flex}}$).  For every edge $vc\in E(H)$, we will have a $(v, c)$-absorber in the absorbing structure.  When proving~\ref{crucial-absorber-property}, if $v$ or $c$ is in $X$ or $Y$, then the spanning rainbow path in $G_{\text{abs}}-X$ contains the $(v, c)$-avoiding path, and otherwise it may contain the $(v, c)$-absorbing path.  More precisely, we find a perfect matching of $H - (X \cup Y)$, and we use the $(v, c)$-absorbing path for every matched pair of vertex $v$ and colour $c$.

\begin{figure}
  \centering
    \begin{tikzpicture}[scale = 1.5]
    \tikzstyle{vtx}=[draw, fill, circle, scale = .5];
    \tikzstyle{link}=[snake=zigzag];

    \node[label=above:{\Large $v_1$}, vtx] (v) at (0, 0) {};

    \node[vtx] (t1) at ($(v) + (30:1)$) {};
    \node[vtx] (b1) at ($(v) + (-30:1)$) {};

    \node[vtx] (t2) at ($(t1) + (1, 0)$) {};
    \node[vtx] (b2) at ($(b1) + (1, 0)$) {};

    \node[vtx] (t3) at ($(t2) + (1, 0)$) {};
    \node[vtx] (b3) at ($(b2) + (1, 0)$) {};

    \draw (v) -- (t1);
    \draw (t1) -- (b1);
    \draw (b1)-- (v);

    \draw (t2) -- (b2);
    \draw (t2) -- (t3);
    \draw (b2) -- (b3);
    \draw (t3) -- (b3);
    \node[left] at (-2.83,0) {$c_{2}$};

    \draw[link] (t1) -- (b2);
    \draw[link] (t2) -- (b3);

    \node[vtx] (t'1) at ($(v) + (150:1)$) {};
    \node[vtx] (b'1) at ($(v) + (-150:1)$) {};

    \node[vtx] (t'2) at ($(t'1) + (-1, 0)$) {};
    \node[vtx] (b'2) at ($(b'1) + (-1, 0)$) {};

    \node[vtx] (t'3) at ($(t'2) + (-1, 0)$) {};
    \node[vtx] (b'3) at ($(b'2) + (-1, 0)$) {};

    \draw (v) -- (t'1);
    \draw (t'1) -- (b'1);
    \draw (b'1) -- (v);

    \draw (t'2) -- (b'2);
    \draw (t'2) -- (t'3);
    \draw (b'2) -- (b'3);
    \draw (t'3) -- (b'3);
    \node[left] at (-2.83,-1.5) {$c_{2}$};

    \draw[link] (b'1) -- (t'2);
    \draw[link] (b'2) -- (t'3);

    \node[label=above:{\Large $v_2$}, vtx] (bv) at (0, -1.5) {};

    \node[vtx] (bt1) at ($(bv) + (30:1)$) {};
    \node[vtx] (bb1) at ($(bv) + (-30:1)$) {};

    \node[vtx] (bt2) at ($(bt1) + (1, 0)$) {};
    \node[vtx] (bb2) at ($(bb1) + (1, 0)$) {};

    \node[vtx] (bt3) at ($(bt2) + (1, 0)$) {};
    \node[vtx] (bb3) at ($(bb2) + (1, 0)$) {};

    \draw (bv) -- (bt1);
    \draw (bt1) -- (bb1);
    \draw (bb1)-- (bv);

    \draw (bt2) -- (bb2);
    \draw (bt2) -- (bt3);
    \draw (bb2) -- (bb3);
    \draw (bt3) -- (bb3);
    \node[right] at (2.83,0) {$c_{1}$};

    \draw[link] (bt1) -- (bb2);
    \draw[link] (bt2) -- (bb3);

    \node[vtx] (bt'1) at ($(bv) + (150:1)$) {};
    \node[vtx] (bb'1) at ($(bv) + (-150:1)$) {};

    \node[vtx] (bt'2) at ($(bt'1) + (-1, 0)$) {};
    \node[vtx] (bb'2) at ($(bb'1) + (-1, 0)$) {};

    \node[vtx] (bt'3) at ($(bt'2) + (-1, 0)$) {};
    \node[vtx] (bb'3) at ($(bb'2) + (-1, 0)$) {};

    \draw (bv) -- (bt'1);
    \draw (bt'1) -- (bb'1);
    \draw (bb'1) -- (bv);

    \draw (bt'2) -- (bb'2);
    \draw (bt'2) -- (bt'3);
    \draw (bb'2) -- (bb'3);
    \draw (bt'3) -- (bb'3);
    \node[right] at (2.83,-1.5) {$c_{1}$};

    \draw[link] (bb'1) -- (bt'2);
    \draw[link] (bb'2) -- (bt'3);

    \draw[decorate, decoration=zigzag] (b'3) .. controls ($(b'3) + (0, -.5)$) and ($(bt'1) + (0, .5)$) .. (bt'1);
    \draw[decorate, decoration=zigzag] (b1) .. controls ($(b1) + (0, -.5)$) and ($(bt3) + (0, .5)$) .. (bt3);
    \draw[decorate, decoration=zigzag] (t'1) .. controls ($(t'1) + (0, .5)$) and ($(t3) + (0, .5)$) .. (t3);
  \end{tikzpicture}
  \caption{An $H$-absorber where $H\cong K_{2,2}$ with bipartition $\left(\{v_1, v_2\}, \{c_1, c_2\}\right)$.}
  \label{absorber-fig}
\end{figure}
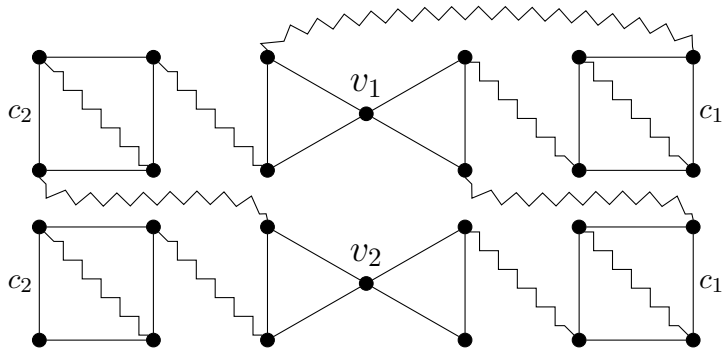

A naive approach would be to use the complete bipartite graph with parts $V_{\mathrm{flex}}$ and $C_{\mathrm{flex}}$ as our template $H$; however, this would require too many absorbing gadgets.  Instead, we choose a much sparser template graph $H$ that is \textit{robustly matchable} with respect to $V_{\mathrm{flex}}$ and $C_{\mathrm{flex}}$ (see Definition~\ref{def:rmbg}); we use a result of Montgomery~\cite[Lemma~10.7]{M18} to construct a robustly matchable bipartite graph with maximum degree $O(1)$.  
Thus, we only need $\Theta(\eta n)$ absorbing gadets to build an absorbing structure satisfying \ref{crucial-absorber-property}.  
Using that $\eta \ll \mu$, we can build such an absorbing structure inside the random subset of $\Theta(\mu n)$ vertices and $\Theta(\mu n)$ colours (see Lemma~\ref{greedy-absorber-lemma}).  However, our absorbing structure needs to contain all but at most $\gamma n$ of the reserved vertices and colours.  To that end, we attach a long rainbow path using almost all of the remaining reserved vertices and colours that we call a \textit{tail} (see Definition~\ref{def:absorber}); this is accomplished using the semi-random method, implemented via hypergraph matchings results (see Lemma~\ref{linking-lemma}).  We use a similar approach in the long path step.

\subsection{Analysing a random $1$-factorization of $K_n$}

To build the absorbing structure described in Section~\ref{absorption-details-section}, we need to show that a typical 1-factorization of $K_n$ satisfies some quasirandom properties.  We call these properties \textit{local edge-resilience} and \textit{robust gadget-resilience} (see Definitions~\ref{def:edge-resilient} and \ref{spread}), and we prove they hold for typical 1-factorizations in Lemma~\ref{main-switching-lemma}.  Standard arguments can be used to show that these properties hold with high probability for a (not necessarily proper) edge-colouring of $K_n$ where each edge is assigned one of $n$ colours independently and uniformly at random; however, it is much more challenging to prove this for a random $1$-factorization.  
We prove Lemma~\ref{main-switching-lemma} using a `coloured version' of switching arguments that are commonly used to study random regular graphs.
Unfortunately, $1$-factorizations of the complete graph~$K_{n}$ are `rigid' structures, in the sense that it is difficult to make local changes without global ramifications on such a $1$-factorization.
Thus, instead of analysing switchings between graphs in~$\cG_{[n-1]}^{\text{col}}$, we will analyse switchings between graphs in~$\cG_{D}^{\text{col}}$ for appropriately chosen $D\subsetneq [n-1]$.
In the setting of random Latin squares, this approach was used by McKay and Wanless~\cite{MW99} and further developed by Kwan and Sudakov~\cite{KS18}, and we build on their ideas.

We use results on the number of $1$-factorizations of dense regular graphs due to Kahn and Lov\'{a}sz (see Theorem~\ref{linlur}) and Ferber, Jain, and Sudakov (see Theorem~\ref{Ferb}) to study the number of completions of a graph $H\in\cG_{D}^{\text{col}}$ to a graph $G\in\cG_{[n-1]}^{\text{col}}$, and we use this information to compare the probability space corresponding to a uniform random choice of $\mathbf{H}\in\cG_{D}^{\text{col}}$, with the probability space corresponding to a uniform random choice of $\mathbf{G}\in\cG_{[n-1]}^{\text{col}}$.
In particular, if a uniformly random $\mathbf{H} \in \cG^{\text{col}}_D$ is extended uniformly at random to obtain a colouring $\mathbf{H'} \in \cG^{\text{col}}_{[n - 1]}$, then $\mathbf{H'}$ is not chosen uniformly at random from $\cG^{\text{col}}_{[n - 1]}$, since different choices of $H \in \cG_{D}^{\text{col}}$ have different numbers of extensions; however, $\mathbf{H'}$ can be compared to a uniformly random $\mathbf{G} \in \cG^{\text{col}}_{[n - 1]}$ as follows (see also Corollary~\ref{wf}).  For an absolute constant $C$, and for each $K \in \cG^{\text{col}}_{[n-1]}$,
\begin{equation*}
    \Prob{\mathbf{G} = K} = \Prob{\mathbf{H'} = K}\cdot \exp(\pm n^{2 - 1/C}).
\end{equation*}
Therefore, any property that holds for $\mathbf{H}$ with probability at least $1 - \exp(-\Omega(n^2))$ also holds with high probability for $\mathbf{G}|_D$.  Our switching arguments yield local edge-resilience and robust gadget-resilience for $\mathbf{H}$ with high enough probability (see Lemmas~\ref{localedge} and \ref{masterswitch}) to apply Corollary~\ref{wf}.

\section{Proving Theorem~\ref{mainthm}}\label{sketch}

In this section, let $\phi$ be a 1-factorization of $K_n$ with vertex set $V$ and colour set $C=[n-1]$.  We first present the details of our absorbing structure, and in Section~\ref{proof-section}, we prove Theorem~\ref{mainthm} (in the case when~$n$ is even) subject to its existence.
We begin by introducing our absorbing gadgets in the following definition (see also Figure~\ref{(v,c)-absorber-fig}).

\begin{defin}\label{def:absorbing-gadget}
  For every $v\in V$ and $c\in C$, a \textit{$(v, c)$-absorbing gadget} is a subgraph of $K_n$ of the form $A = T\cup Q$ such that the following holds:
  \begin{itemize}
  \item $T \cong K_3$ and $Q\cong C_4$, 
  \item $T$ and $Q$ are vertex-disjoint,
  \item $v\in V(T)$ and there is a unique edge $e \in E(Q)$ such that $\phi(e) = c$, 
  \item if $e_1,e_2 \in E(T)$ are the edges incident to $v$, then there is matching $\{e'_1, e'_2\}$ in $Q$ not containing $e$ such that $\phi(e_i) = \phi(e'_i)$ for $i\in\{1, 2\}$,
  \item if $e_3 \in E(T)$ is the edge not incident to $v$, then there is an edge $e'_3 \in E(Q)$ such that $\{e'_3, e\}$ is a matching in $Q$ and $\phi(e_3) = \phi(e'_3) \neq c$.
  \end{itemize}
 In this case, a pair~$P_{1}$,~$P_{2}$ of paths \textit{completes} the $(v, c)$-absorbing gadget $A = T \cup Q$ if
  \begin{itemize}
  \item the ends of $P_1$ are non-adjacent vertices in $Q$,
  \item one end of $P_2$ is in $Q$ but not incident to $e$ and the other end of $P_2$ is in $V(T)\setminus\{v\}$,
  \item $P_1$ and $P_2$ are vertex-disjoint and both $P_1$ and $P_2$ are internally vertex-disjoint from $A$,
  \item $P_1 \cup P_2$ is rainbow, and
  \item $\phi(P_1 \cup P_2) \cap \phi(A) = \varnothing$,
  \end{itemize}
  and we say $A' \coloneqq A \cup P_1 \cup P_2$ is a \textit{$(v, c)$-absorber}.  We also define the following.
  \begin{itemize}
  \item The path $P$ with edge-set $E(P_1)\cup E(P_2) \cup \{e'_1, e'_2, e_3\}$ is the \textit{$(v, c)$-avoiding} path in $A'$, and the path $P'$ with edge-set $E(P_1) \cup E(P_2) \cup \{e_1, e_2, e'_3, e\}$ is the \textit{$(v, c)$-absorbing} path in $A'$.
  \item A vertex in $V(A)\setminus \{v\}$, a colour in $\phi(A) \setminus \{c\}$, or an edge in $E(A)$ is \textit{used} by the $(v, c)$-absorbing gadget $A$.
  \end{itemize}
\end{defin}


It is convenient for us to distinguish between a $(v, c)$-absorbing gadget and a $(v, c)$-absorber, because when we build our absorbing structure, we first find a $(v, c)$-absorbing gadget for every $vc \in E(H)$ and then find the paths completing each absorbing gadget.  We also find an additional set of paths that `links' the gadgets together, as in the following definition.

\begin{defin}\label{def:absorber}
  Let $H$ be a bipartite graph with bipartition $(V', C')$ where $V'\subseteq V$ and $C'\subseteq C$, and suppose $\mathcal A = \{A_{v, c} : vc \in E(H)\}$ where $A_{v, c}$ is a $(v, c)$-absorbing gadget.
  \begin{itemize}
  \item We say $\mathcal A$ \textit{satisfies} $H$ if whenever $A_{v,c}, A_{v',c'}\in\cA$ for some $(v,c)\neq (v',c')$, no vertex in $V(A_{v,c})$ or colour in $\phi(A_{v, c})$ is used by $A_{v', c'}$.
  \item If $\mathcal P$ is a collection of vertex-disjoint paths of length 4, then we say $\mathcal P$ \textit{completes} $\mathcal A$ if the following holds:
    \begin{itemize}
    \item $\bigcup_{P\in\cP}P$ is rainbow,
    \item no colour that is either in $C'$ or is used by a $(v, c)$-absorbing gadget $A_{v, c}\in \mathcal A$ appears in a path $P\in \mathcal P$,
    \item no vertex that is either in $V'$ or is used by a $(v, c)$-absorbing gadget $A_{v,c}\in\mathcal A$ is an internal vertex of a path $P \in \mathcal P$,
    \item for every $(v, c)$-absorbing gadget $A_{v, c}\in\mathcal A$ there is a pair of paths $P_1, P_2\in \mathcal P$ such that $P_1$ and $P_2$ complete $A_{v, c}$ to a $(v, c)$-absorber $A'_{v, c}$, and
    \item the graph $\left(\bigcup_{A\in\mathcal A}A\cup\bigcup_{P\in\mathcal P}P\right) \setminus V'$ is connected and has maximum degree three, and $\mathcal P$ is minimal subject to this property.
    \end{itemize}

  \item We say $(\mathcal A, \mathcal P)$ is an \textit{$H$-absorber} if $\mathcal A$ satisfies $H$ and is completed by $\mathcal P$.  See Figure~\ref{absorber-fig}.
  \item We say a rainbow path $T$ is a \textit{tail} of an $H$-absorber $(\mathcal A, \mathcal P)$ if
    \begin{itemize}
    \item one of the ends of $T$, say $x$, is in a $(v, c)$-absorbing gadget $A_{v,c} \in \mathcal A$ such that $x \neq v$,
    \item $V(T)\cap V(A) \subseteq\{x\}$ for all $A\in \mathcal A$ and $V(T) \cap V(P) = \varnothing$ for all $P \in \mathcal P$, and
    \item $\phi(T) \cap \phi(A) = \varnothing$ for all $A \in \mathcal A$ and $\phi(T) \cap \phi(P) = \varnothing$ for all $P \in \mathcal P$.
    \end{itemize}
  \item For every matching $M$ in $H$, we define the \textit{path absorbing $M$} in $(\mathcal A, \mathcal P, T)$ to be the rainbow path $P$ such that
    \begin{itemize}
    \item $P$ contains $\bigcup_{P'\in \mathcal P}P' \cup T$ and
    \item for every $vc \in E(H)$, if $vc\in E(M)$, then $P$ contains the $(v, c)$-absorbing path in the $(v, c)$-absorber $A'_{v, c}$ and $P$ contains the $(v, c)$-avoiding path otherwise (that is, $V(P)\cap V' = V(M)\cap V'$ and $\phi(P) \cap C' = V(M) \cap C'$).
    \end{itemize}

  \end{itemize}
\end{defin}

Note that if~$\cP$ completes~$\cA$, then some of the paths in~$\cP$ will complete absorbing gadgets in~$\cA$ to absorbers, while the remaining set of paths $\cP'\subseteq\cP$ will be used to connect all the absorbing gadgets in~$\cA$.
More precisely, there is an enumeration $A_{1},\dots,A_{|\cA|}$ of~$\cA$ and an enumeration $P_{1},\dots,P_{|\cA|-1}$ of~$\cP'$ such that each~$P_{i}$ joins~$A_{i}$ to~$A_{i+1}$.
In particular, for each $i\in[|\cA|]\setminus\{1,|\cA|\}$, each vertex in~$A_{i}\setminus V'$ is the endpoint of precisely one path in~$\cP$ (and thus has degree three in $\bigcup_{A\in\cA}A\cup\bigcup_{P\in\cP}P$), while both~$A_{1}\setminus V'$ and~$A_{|\cA|}\setminus V'$ contain precisely one vertex which is not the endpoint of some path in~$\cP$ (and thus these two vertices have degree two in $\bigcup_{A\in\cA}A\cup\bigcup_{P\in\cP}P$).
Any tail~$T$ of an $H$-absorber~$(\cA,\cP)$ has to start at one of these two vertices.
Altogether this means that, given any matching~$M$ of~$H$, the path absorbing~$M$ in~$(\cA,\cP,T)$ in the definition above actually exists.

Our absorbing structure is essentially an $H$-absorber $(\mathcal A, \mathcal P)$ with a tail $T$ and flexible sets $V_{\mathrm{flex}}, C_{\mathrm{flex}}\subseteq V(H)$ for an appropriately chosen template $H$.  If $H - (X\cup Y)$ has a perfect matching $M$, then the path absorbing $M$ in $(\mathcal A, \mathcal P, T)$ satisfies~\ref{crucial-absorber-property}.  This fact motivates the property of $H$ that we need in the next definition.


\begin{defin}\label{def:rmbg}
  Let $H$ be a bipartite graph with bipartition $(A, B)$ such that $|A| = |B|$, and let $A'\subseteq A$ and $B'\subseteq B$ such that $|A'| = |B'|$.
  \begin{itemize}
  \item We say $H$ is \textit{robustly matchable} with respect to $A'$ and $B'$ if for every pair of sets $X$ and $Y$ where $X\subseteq A'$, $Y\subseteq B'$, and $|X| = |Y| \leq |A'| / 2$, there is a perfect matching in $H - (X\cup Y)$.
  \item In this case, we say $A'$ and $B'$ are \textit{flexible} and $A\setminus A'$ and \textit{$B\setminus B'$} are \textit{buffer sets}.
  \end{itemize}
\end{defin}
This concept was first introduced by Montgomery~\cite{M18}. 
If $H$ is robustly matchable with respect to $V_{\mathrm{flex}}$ and $C_{\mathrm{flex}}$, then an $H$-absorber $(\mathcal A, \mathcal P)$ with tail $T$ satisfies~\ref{crucial-absorber-property}.  The last property of our absorbing structure that we need is that the flexible sets allow us to execute the covering step, which we capture in the following definition.

\begin{defin}
  Let $V_{\mathrm{flex}} \subseteq V$, let $C_{\mathrm{flex}}\subseteq C$, and let $G_{\mathrm{flex}}$ be a spanning subgraph of $K_n$.
  \begin{itemize}
  \item If $u,v\in V$ and $c\in C$, and $P\subseteq G_{\mathrm{flex}}$ is a rainbow path of length four such that
    \begin{itemize}
    \item $u$ and $v$ are the ends of $P$,
    \item $u', w, v' \in V_{\mathrm{flex}}$, where $uu', u'w, wv', vv' \in E(P)$,
    \item $\phi(uu'), \phi(wv'), \phi(vv') \in C_{\mathrm{flex}}$, and
    \item $\phi(u'w) = c$,
    \end{itemize}
    then $P$ is a $(V_{\mathrm{flex}}, C_{\mathrm{flex}}, G_{\mathrm{flex}})$-\textit{cover} of $u, v,$ and $c$.
  \item If $P$ is a rainbow path such that $P = \bigcup_{i = 1}^k P_i$ where $P_i$ is a $(V_{\mathrm{flex}}, C_{\mathrm{flex}}, G_{\mathrm{flex}})$-cover of $v_i, v_{i+1},$ and $c_i$, then $P$ is a $(V_{\mathrm{flex}}, C_{\mathrm{flex}}, G_{\mathrm{flex}})$-\textit{cover} of $\{v_1, \dots, v_{k+1}\}$ and $\{c_1, \dots, c_k\}$.
  \item If $H$ is a regular bipartite graph with bipartition $(V', C')$ where $V'\subseteq V$ and $C'\subseteq C$ such that
    \begin{itemize}
    \item $H$ is robustly matchable with respect to $V_{\mathrm{flex}}$ and $C_{\mathrm{flex}}$ where $|V_{\mathrm{flex}}|, |C_{\mathrm{flex}}| \geq \delta n$, and
    \item for every $u,v \in V$ and $c\in C$, there are at least $\delta n^2$ $(V_{\mathrm{flex}}, C_{\mathrm{flex}}, G_{\mathrm{flex}})$-covers of $u,v$, and $c$, 
    \end{itemize}
    then $H$ is a \textit{$\delta$-absorbing template with flexible sets $(V_{\mathrm{flex}}, C_{\mathrm{flex}}, G_{\mathrm{flex}})$}.
  \item If $(\mathcal A, \mathcal P)$ is an $H$-absorber where $H$ is a $\delta$-absorbing template and $T$ is a tail for $(\mathcal A, \mathcal P)$, then $(\mathcal A, \mathcal P, T, H)$ is a \textit{$\delta$-absorber}.
  \end{itemize}
\end{defin}

A $36\gamma$-absorber has the properties we need to execute both the covering step and the absorbing step, which we make formal with the next proposition.

First, we introduce the following convenient convention.
Given $V''\subseteq V'\subseteq V$ and $C''\subseteq C'\subseteq C$, we say that~$(V'',C'')$ is \textit{contained} in~$(V',C')$ with $\delta$\textit{-bounded remainder} if $V''\subseteq V'$, $C''\subseteq C'$, and $|V'\setminus V''|,|C'\setminus C''|\leq\delta n$.
If $G$ is a spanning subgraph of $K_n$, $V' \subseteq V$, and $C'\subseteq C$, then we say a graph $G'$ is \textit{contained} in $(V', C', G)$ with \textit{$\delta$-bounded remainder} if~$(V(G'),\phi(G'))$ is contained in~$(V',C')$ with $\delta$-bounded remainder and $G'\subseteq G$.
\begin{prop}\label{main-absorbing-proposition}
  If $(\mathcal A, \mathcal P, T, H)$ is a $\delta$-absorber and $P'$ is a rainbow path contained in $(V\setminus V', C\setminus C', G')$ with $\delta / 18$-bounded remainder where
  \begin{itemize}
  \item $V' = \bigcup_{A\in\mathcal A}V(A) \cup \bigcup_{P\in \mathcal P}V(P) \cup V(T)$
  \item $C' = \bigcup_{A\in\mathcal A}\phi(A) \cup \bigcup_{P\in \mathcal P}\phi(P) \cup \phi(T)$, and
  \item $G'$ is the complement of $\bigcup_{A\in\mathcal A}A \cup \bigcup_{P\in \mathcal P}P \cup T$,
  \end{itemize}
  then there is both a rainbow Hamilton path containing $P'$ and a rainbow cycle containing $P'$ and all of the colours in $C$.
\end{prop}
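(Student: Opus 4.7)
The strategy is to extend $P'$ to a rainbow Hamilton path of the form $P'\cup P_2\cup P_3$, where $P_2$ carries out the ``covering step'' using $(V_{\mathrm{flex}},C_{\mathrm{flex}},G_{\mathrm{flex}})$-covers and $P_3$ is the path absorbing an appropriately chosen perfect matching of $H$. Everything we need is essentially packaged into the definitions: the density hypothesis on covers powers a greedy construction of $P_2$, and robust matchability of $H$ powers the existence of the absorbing matching.

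Let $U:=V\setminus(V'\cup V(P'))$ and $D:=C\setminus(C'\cup\phi(P'))$. By the $\delta/18$-bounded remainder hypothesis, $|U|,|D|\leq\delta n/18$. Fix an endpoint $v_0$ of $P'$ and let $v_*$ be the free end of the tail $T$. I would construct a rainbow path $P_2$ from $v_0$ to $v_*$ by concatenating $k$ covers. Explicitly, I fix joints $v_0=w_0,w_1,\ldots,w_k=v_*$ and target colours $c_1,\ldots,c_k$ with $\{w_1,\ldots,w_{k-1}\}\supseteq U$ and $\{c_1,\ldots,c_k\}\supseteq D$, drawing the remaining joints from $V_{\mathrm{flex}}$ and the remaining targets from $C_{\mathrm{flex}}$. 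The integer $k$ and the padding are chosen so that the eventual sets $X\subseteq V_{\mathrm{flex}}$ and $Y\subseteq C_{\mathrm{flex}}$ of flex-vertices and flex-colours used by $P_2$ satisfy $|X|=|Y|\leq |V_{\mathrm{flex}}|/2$. Between each pair of consecutive joints $w_{i-1},w_i$ I would insert a $(V_{\mathrm{flex}},C_{\mathrm{flex}},G_{\mathrm{flex}})$-cover of $(w_{i-1},w_i,c_i)$, chosen greedily: since only $O(\delta n)$ vertices and colours are forbidden at each stage and each forbids at most $O(n)$ of the $\geq\delta n^2$ available covers, a valid choice always exists.

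Now, since $H$ is robustly matchable with respect to $V_{\mathrm{flex}}$ and $C_{\mathrm{flex}}$ and $|X|=|Y|\leq|V_{\mathrm{flex}}|/2$, there exists a perfect matching $M$ of $H-(X\cup Y)$. Let $P_3$ be the path absorbing $M$ in $(\cA,\cP,T)$; by definition, $P_3$ has $v_*$ as one endpoint, $V(P_3)\cap V'=V'\setminus X$, and $\phi(P_3)\cap C'=C'\setminus Y$. Hence $P'\cup P_2\cup P_3$ is rainbow, spans every vertex of $V=V(P')\cup U\cup X\cup(V'\setminus X)$, and uses every colour of $C=\phi(P')\cup D\cup Y\cup(C'\setminus Y)$, so it is a rainbow Hamilton path containing $P'$.

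For the rainbow cycle using all colours in $C$, I would carry out the same construction but additionally fix a pair $(v^\star,c^\star)\in(V_{\mathrm{flex}}\setminus X)\times(C_{\mathrm{flex}}\setminus Y)$ and invoke robust matchability to obtain a perfect matching $M'$ of $H-((X\cup\{v^\star\})\cup(Y\cup\{c^\star\}))$. The resulting path $P'\cup P_2\cup P_3'$ has $n-1$ vertices (missing $v^\star$) and uses all colours except $c^\star$. By exploiting the freedom in choosing $(v^\star,c^\star)$ and in the greedy construction of $P_2$ (which jointly control the identity of the two endpoints and the colour of the edge between them), one can arrange that the two endpoints are joined by the edge of colour $c^\star$; adding this edge closes the path into a rainbow cycle of length $n-1$ using all colours of $C$. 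The main obstacle throughout is the bookkeeping in the covering step---simultaneously covering $U$ and $D$, maintaining $|X|=|Y|$, and keeping enough slack to apply the $\delta n^2$-covers hypothesis greedily---with the cycle variant additionally requiring control over the colour of the closing edge via the freedom in $(v^\star,c^\star)$.
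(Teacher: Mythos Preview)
Your Hamilton-path argument is essentially the paper's, but you skip a count that your padding scheme cannot bypass. Each $(V_{\mathrm{flex}},C_{\mathrm{flex}},G_{\mathrm{flex}})$-cover contributes exactly three flex-vertices and three flex-colours; each padding joint adds one more flex-vertex and each padding target one more flex-colour. So with $p$ padding joints and $q$ padding targets you get $|X|-|Y|=p-q=(k-1-|U|)-(k-|D|)=|D|-|U|-1$, and $|X|=|Y|$ forces $|D|=|U|+1$ regardless of $k$. The paper proves this identity directly: regularity of $H$ gives $|V(H)\cap V|=|V(H)\cap C|$, and a short count over gadgets, links and the tail yields $|V'|=|C'|+1$, whence $|U|=|D|-1$. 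Once you have this, no padding is needed at all (take $k=|D|$), and your greedy construction of $P_2$ goes through exactly as in the paper.

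Your cycle argument, however, has a real gap. The two endpoints of $P'\cup P_2\cup P_3'$ are the second end $v_0'$ of $P'$ and the free vertex $u'$ of the $H$-absorber (the unique degree-two vertex of $\bigcup_{A}A\cup\bigcup_{P}P$ not lying in $T$). Both are fixed before you choose $(v^\star,c^\star)$ or build $P_2$; neither of these choices moves the endpoints, so you cannot ``arrange that the two endpoints are joined by the edge of colour $c^\star$''. The paper closes the cycle differently: it splits the covering into two pieces, a cover $P_1''$ of $\{v_0,v_1,\dots,v_{\ell-1},u\}$ and $\{c_1,\dots,c_{k-1}\}$ joining one end of $P'$ to the free end $u$ of $T$, and a single cover $P_2''$ of $v_0',u',c_k$ joining the other end of $P'$ to $u'$. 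Then $P'\cup P_1''\cup P'''\cup P_2''$ is the desired rainbow cycle using all colours (omitting just the one leftover vertex $v_\ell$, which is permitted since the statement does not demand a Hamilton cycle in the even case).
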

\begin{proof}
  Order the colours in $C\setminus(\phi(P')\cup C')$ as $c_1, \dots, c_k$, and note that $k \leq \delta n / 18$.  Order the vertices in $V\setminus (V(P') \cup V')$ as $v_1, \dots, v_\ell$.\COMMENT{
  \begin{align*}
    |V(P')| &= |\phi(P')| + 1,\\
    |V'| &= |C'| + 1,\\
    |V| &= |C| + 1,\\
    \ell &= |V| - |V'| - |V(P')|,\ \mathrm{and}\\
    k &= |C| - |C'| - |\phi(P')|,
  \end{align*}}
  Using that~$H$ is regular, it is easy to see that $|V'|=|C'|+1$, and thus $\ell = k - 1$.  Let $v_0$ and $v'_0$ be the ends of $P'$, let $u$ be the end of $T$ not in $V(A)$ for any $A \in \mathcal A$, and let $u'$ be the unique vertex in $\bigcup_{A\in\mathcal A}V(A)\setminus V(T)$ of degree two in $\bigcup_{A\in\mathcal A}A \cup \bigcup_{P\in \mathcal P}P$.
  Let $(V_{\mathrm{flex}}, C_{\mathrm{flex}}, G_{\mathrm{flex}})$ be the flexible sets of $H$.

    First we show that there is a rainbow Hamilton path containing $P'$.
  We claim there is a $(V_{\mathrm{flex}}, C_{\mathrm{flex}}, G_{\mathrm{flex}})$-cover $P''$ of $\{v_0, \dots, v_k\}$ and $\{c_1, \dots, c_k\}$, where $v_k \coloneqq u$.
  Suppose for $j \in [k - 1]$ and $i <  j$ that $P_i$ is a $(V_{\mathrm{flex}}, C_{\mathrm{flex}}, G_{\mathrm{flex}})$-cover of $v_i, v_{i + 1}$, and $c_{i + 1}$ such that $\bigcup_{i < j} P_i$ is a rainbow path.  We show that there exists a $(V_{\mathrm{flex}}, C_{\mathrm{flex}}, G_{\mathrm{flex}})$-cover $P_j$ of $v_j, v_{j + 1}$, and $c_{j + 1}$ that is internally-vertex- and colour-disjoint from $\bigcup_{i < j}P_i$, which implies that $\bigcup_{i \leq j} P_i$ is a rainbow path, and thus we can choose the path $P''$ greedily, proving the claim.  Since each vertex in $V_{\mathrm{flex}}$ and each colour in $C_{\mathrm{flex}}$ is contained in at most $3n$ $(V_{\mathrm{flex}}, C_{\mathrm{flex}}, G_{\mathrm{flex}})$-covers of $v_j, v_{j + 1}$, and $c_{j + 1}$, and since $H$ is a $\delta$-absorbing template, there are at least $\delta n^2 - 18n\cdot j$ $(V_{\mathrm{flex}}, C_{\mathrm{flex}}, G_{\mathrm{flex}})$-covers of $v_j, v_{j + 1}$, and $c_{j + 1}$ not containing a vertex or colour from $\bigcup_{i < j}P_i$.  Thus, since $j < k \leq \delta n / 18$, there exists a $(V_{\mathrm{flex}}, C_{\mathrm{flex}}, G_{\mathrm{flex}})$-cover $P_j$ of $v_j, v_{j + 1}$, and $c_{j + 1}$ such that $\bigcup_{i\leq j} P_i$ is a rainbow path, as desired, and consequently we can choose the path $P''$ greedily, as claimed.

  Now let $X \coloneqq V(P'') \cap V_{\mathrm{flex}}$, and let $Y \coloneqq \phi(P'')\cap C_{\mathrm{flex}}$.  Since $|X| = |Y| = 3k \leq |V_{\mathrm{flex}}|/2$ and $H$ is robustly matchable with respect to $V_{\mathrm{flex}}$ and $C_{\mathrm{flex}}$, there is a perfect matching $M$ in $H - (X\cup Y)$.  Let $P'''$ be the path absorbing $M$ in $(\mathcal A, \mathcal P, T)$.  Then $P'\cup P'' \cup P'''$ is a rainbow Hamilton path, as desired.  
  
  Now we show that there is a rainbow cycle containing $P'$ and all of the colours in $C$.  By the same argument as before, there is a $(V_{\mathrm{flex}}, C_{\mathrm{flex}}, G_{\mathrm{flex}})$-cover $P''_1$ of $\{v_0, \dots, v_{\ell-1}, u\}$ and $\{c_1, \dots, c_{k-1}\}$ as well as a $(V_{\mathrm{flex}}, C_{\mathrm{flex}}, G_{\mathrm{flex}})$-cover $P''_2$ of $v'_0$, $u'$, and $c_k$ such that $P''_1$ and $P''_2$ are vertex-and colour-disjoint.  Letting $X \coloneqq V(P''_1\cup P''_2) \cap V_{\mathrm{flex}}$ and $Y\coloneqq \phi(P''_1\cup P''_2) \cap C_{\mathrm{flex}}$, letting $M$ be a perfect matching in $H - (X\cup Y)$ and $P'''$ be the path absorbing $M$ in $(\mathcal A, \mathcal P, T)$ as before, $P'\cup P''_1\cup P''_2\cup P'''$ is a rainbow cycle using all the colours in $C$, as desired.
\end{proof}

\subsection{The proof of Theorem~\ref{mainthm} when~$n$ is even}\label{proof-section}

In this subsection, we prove the~$n$ even case of Theorem~\ref{mainthm} subject to two lemmas, Lemmas~\ref{main-switching-lemma} and~\ref{main-absorber-lemma}, which we prove in Sections~\ref{switching-section} and~\ref{absorption-section}, respectively.
The first of these lemmas, Lemma~\ref{main-switching-lemma}, states that almost all 1-factorizations have two key properties, introduced in the next two definitions.  Lemma~\ref{main-absorber-lemma} states that if a 1-factorization has both of these properties, then we can build an absorber using the reserved vertices and colours with high probability.

Recall the hierarchy of constants $\eps, \gamma, \eta, \mu$ from~\eqref{constant-heirarchy}.
%
Firstly, we will need to show that if $G\in\cG_{[n-1]}^{\text{col}}$ is chosen uniformly at random, then with high probability, for any $V'\subseteq V$, $C'\subseteq C$ that are not too small,~$G$ admits many edges with colour in~$C'$ and both endpoints in~$V'$.
This property will be used in the construction of the tail of our absorber.
\begin{defin}\label{def:edge-resilient}
For $D\subseteq C= [n-1]$, we say that~$G\in\cG_{D}^{\text{col}}$ is $\eps$\textit{-locally edge-resilient} if for all sets of colours $D'\subseteq D$ and all sets of vertices $V'\subseteq V$ of sizes $|V'|, |D'|\geq\eps n$, we have that $e_{V',D'}(G)\geq\eps^{3}n^{2}/100$.
\end{defin}
Secondly, we will need that almost all $G\in \cG_{[n-1]}^{\text{col}}$ contain many $(v,c)$-absorbing gadgets for all $v\in V$, $c\in C$.
\begin{defin}\label{spread}
Let $D\subseteq C=[n-1]$.
\begin{itemize}
\item For $G\in\cG_{D}^{\text{col}}$, $x\in V$, $c\in D$, and $t\in\bN_{0}$, we say that a collection~$\cA_{(x,c)}$ of $(x,c)$-absorbing gadgets in~$G$ is $t$\textit{-well-spread} if
\begin{itemize}
    \item for all $v\in V$, there are at most~$t$ $(x,c)$-absorbing gadgets in~$\cA_{(x,c)}$ using~$v$;
    \item for all $e\in E(G)$, there are at most~$t$ $(x,c)$-absorbing gadgets in~$\cA_{(x,c)}$ using~$e$;
    \item for all $d\in D$, there are at most~$t$ $(x,c)$-absorbing gadgets in~$\cA_{(x,c)}$ using~$d$.
\end{itemize}
(Note that by definition of `using' (see Definition~\ref{def:absorbing-gadget}), there are no $(x,c)$-absorbing gadgets using~$x$ or~$c$.)
\item We say that $G\in\cG_{[n-1]}^{\text{col}}$ is $\mu$\textit{-robustly gadget-resilient} if for all $x\in V$ and all $c\in C$, there is a $5\mu n/4$-well-spread collection of at least~$\mu^{4}n^{2}/2^{23}$ $(x,c)$-absorbing gadgets in~$G$.
\end{itemize}
\end{defin}

\begin{lemma}\label{main-switching-lemma}
  Suppose $1/n\ll \eps,\mu \ll1$.
  If~$\phi$ is a 1-factorization of~$K_n$ chosen uniformly at random, then~$\phi$ is $\eps$-locally edge-resilient and $\mu$-robustly gadget-resilient with high probability.
\end{lemma}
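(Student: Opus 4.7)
The natural plan is to prove the two resilience properties separately, each by a switching argument of the type used classically for random regular graphs and perfect matchings, followed by a union bound. The overall idea is that for a uniformly random $G \in \cG_{[n-1]}^{\mathrm{col}}$, the expected count of each ``good'' local configuration (edges of $E_{V',D'}(G)$, respectively $(x,c)$-absorbing gadgets) can be computed by fixing a candidate configuration and using the fact that conditional on being in a uniform 1-factorization, the probability of a given colour class being a given perfect matching is very close to $1/(n-1)!!$. The task is then to show concentration around this mean, strong enough to beat the union bound cost.

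For $\eps$-local edge-resilience, fix $V' \subseteq V$ and $D' \subseteq [n-1]$ with $|V'|=|D'|=\lceil\eps n\rceil$. A straightforward computation gives $\Expect{e_{V',D'}(G)} = \Theta(\eps^{3}n^{2})$ (since each of the $|D'|$ colour classes is a perfect matching, contributing in expectation roughly $|V'|(|V'|-1)/(2(n-1))$ edges inside $V'$). To show concentration below the mean, I would define a switching that, given a 1-factorization with $e_{V',D'}(G)=k$, picks a colour $c\in D'$ and two edges of $M_{c}$ each with exactly one endpoint in $V'$, and swaps their endpoints to produce an extra edge of colour $c$ inside $V'$ (and one outside). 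A ratio-counting argument comparing the number of 1-factorizations with $e_{V',D'}(G)=k$ and with $e_{V',D'}(G)=k+1$ then shows the probability that $e_{V',D'}(G)<\eps^{3}n^{2}/100$ decays super-exponentially in $n$, beating the $\binom{n}{\eps n}\binom{n-1}{\eps n} \le 2^{O(\eps n\log(1/\eps))}$ union-bound cost when $\eps$ is sufficiently small.

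For $\mu$-robust gadget-resilience, fix $x \in V$ and $c \in C$. By choosing three distinct colours $i_{1},i_{2},i_{3}$ and then verifying that the required triangle $T$ at $x$ and the required colour-$c$ edge with parallel colour-$i_{1}$/$i_{2}$ edges in a $4$-cycle $Q$ both exist, one computes that $\Expect{|\cA_{(x,c)}|}=\Theta(n^{2})$, well above the target $\mu^{4}n^{2}/2^{23}$. A similar switching argument, applied to a subgraph supporting an absorbing gadget (for example, swapping an edge of some colour $i_{j}$ with another edge of its matching to create/destroy a gadget), gives concentration of $|\cA_{(x,c)}|$ from below with exponentially small failure probability; a union bound over $x,c$ then gives the count part. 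For well-spreadness, I would for each fixed $(x,c)$ and each candidate ``overused'' element $z$ (a vertex, edge, or colour) show by a switching argument that the event $\{$more than $5\mu n/4$ gadgets in some $5\mu n/4$-maximal subcollection use $z\}$ has probability $n^{-\omega(1)}$, and then union bound. Concretely, one builds a large pairing of gadgets with/without $z$ and shows each gadget containing $z$ has many switches to a gadget avoiding $z$, forcing the expected number of gadgets using $z$ to be $O(\mu n)$ and hence at most $5\mu n/4$ with high probability.

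The main obstacle will be setting up the switchings for the gadget-resilience cleanly: because $(x,c)$-absorbing gadgets involve a triangle $T$ and a $4$-cycle $Q$ with rigid colour constraints linking them, a naive edge-swap inside one component can destroy the colour pattern that $T \cup Q$ must exhibit, so the switchings must be local enough to preserve the rest of the 1-factorization while flexible enough to hit many good configurations. Quantifying the switch ratios precisely, and then arranging the concentration inequalities so the failure probabilities survive the union bound over all $(x,c)$ and all overused elements $z$, is where the bulk of the technical work lies.
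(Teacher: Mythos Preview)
Your overall plan---switchings for each property, then union bounds---is right in spirit, but there is a structural gap that the paper's proof is specifically designed to overcome.

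\textbf{The rigidity problem.} Your proposed switchings operate directly on a $1$-factorization of $K_n$. But a $1$-factorization is rigid: if $M_c$ contains edges $ab$ and $cd$, you cannot simply ``swap endpoints'' to put $ac$ and $bd$ into $M_c$, because $ac$ and $bd$ already carry other colours and recolouring them forces further changes, triggering a cascade. The paper therefore never switches in $\cG_{[n-1]}^{\text{col}}$. Instead it fixes a small colour set $D$ (of size $\eps n$ for edge-resilience, $\mu n+1$ for gadget-resilience), performs the switchings in $\cG_D^{\text{col}}$---the space of $|D|$-regular graphs equipped with a $D$-coloured $1$-factorization, where there is plenty of room to add and delete edges---and then transfers the resulting probability bounds to $\cG_{[n-1]}^{\text{col}}$ via a \emph{weighting factor} (Corollary~\ref{wf}), obtained by combining the Kahn--Lov\'asz upper bound and the Ferber--Jain--Sudakov lower bound on the number of completions of any $H\in\cG_D^{\text{col}}$ to a full $1$-factorization. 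This two-step scheme (switch in a sparse space, then reweight) is the key missing ingredient in your plan.

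\textbf{Gadget-resilience is substantially harder than your sketch suggests.} The paper does not prove concentration of $|\cA_{(x,c)}|$ directly, nor does it establish well-spreadness by a separate switching for each overused element. Instead, working in $\cG_{D\cup\{c\}}^{\text{col}}$, it first establishes and conditions on a quasirandomness property (Lemma~\ref{quasirandom}); it then replaces $(x,c)$-absorbing gadgets by more restrictive $(x,c,\cP)$-gadgets tied to an ordered equitable partition $\cP=(D_1,\dots,D_4)$ of $D$ and carrying an extra edge; and it introduces notions of \emph{distinguishability} and \emph{saturation} of $c$-edges so that a carefully engineered counter $r(G)$ can be increased by \emph{exactly} one via a single ``twist'' switch (a coordinated $6$-edge rewiring; see Figure~\ref{fig:ts}). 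The point of $r(G)$ is that it is rigged so that $r(G)$ large automatically yields a $5\mu n/4$-well-spread family of $(x,c)$-absorbing gadgets (Lemma~\ref{justgadgets}); well-spreadness is thus built into the counted quantity rather than proved afterwards. Your sketch of ``swap an edge of some colour $i_j$'' gives no control over how many gadgets a single switch creates or destroys, which is precisely the difficulty the distinguishability/saturation machinery and the extra structure in $(x,c,\cP)$-gadgets are there to tame.
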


As discussed, we prove Lemma~\ref{main-switching-lemma} in Section~\ref{switching-section} using switching arguments.  The next lemma is used to construct an absorber using the reserved vertices and colours.

\begin{lemma}\label{main-absorber-lemma}
  Suppose $1/n\ll\eps\ll\gamma\ll\eta\ll\mu\ll1$, and let $p = q = \beta = 5\mu + 26887\eta/2 + \gamma/3 - 26880\eps$.
  If $\phi$ is an $\eps$-locally edge-resilient and $\mu$-robustly gadget-resilient 1-factorization of $K_n$ with vertex set $V$ and colour set $C$ and
  \begin{itemize}
  \item [(R1)] $V'$ is a $p$-random subset of $V$, 
  \item [(R2)] $C'$ is a $q$-random subset of $C$, and 
  \item [(R3)] $G'$ is a $\beta$-random subgraph of $K_n$,
  \end{itemize}
  then with high probability there is a $36\gamma$-absorber $(\mathcal A, \mathcal P, T, H)$ such that $\bigcup_{A\in\mathcal A}A \cup \bigcup_{P\in \mathcal P}P \cup T$  is contained in $(V', C', G')$ with $\gamma$-bounded remainder.
\end{lemma}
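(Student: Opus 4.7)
The plan is to build the $36\gamma$-absorber in four stages---template, gadgets, paths, tail---using the two resilience hypotheses and concentration for the random reservations. As a preliminary step I would apply Chernoff and bounded-differences inequalities to condition on a high-probability event on which $|V'|$, $|C'|$, $|E(G')|$, and many local counts (such as $|N_{G'}(v) \cap V'|$ or $|E_{G'}^{C''}(X, Y)|$ for prescribed $X, Y, C''$) lie within $(1 \pm \eps)$ of their expectations. This reduces the task to a deterministic construction inside a pseudorandom $(V', C', G')$.

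For the template I would pick uniformly random subsets $V_H \subseteq V'$ and $C_H \subseteq C'$ of a common size slightly larger than $\eta n$, designate flexible sets $V_{\text{flex}} \subseteq V_H$ and $C_{\text{flex}} \subseteq C_H$ of size exactly $\eta n$, and invoke a Montgomery-style construction to obtain a bounded-degree bipartite graph $H$ on $(V_H, C_H)$ that is robustly matchable with respect to $(V_{\text{flex}}, C_{\text{flex}})$. Taking $G_{\text{flex}}$ to be a suitable subgraph of $G'$ (essentially $G'$ with the few later-used edges removed), the $36\gamma$-absorbing template property reduces to counting rainbow length-four paths $u u' w v' v$ with $u', w, v' \in V_{\text{flex}}$, three of the four colours in $C_{\text{flex}}$, middle colour equal to $c$, and all edges in $G_{\text{flex}}$; a first-moment calculation gives order $\mu^4 \eta^6 n^2$ such covers per $(u, v, c)$, which exceeds $36\gamma n^2$ since $\gamma \ll \eta \ll \mu$, and concentration makes this uniform over all triples.

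Next, for each $vc \in E(H)$ I would select a $(v, c)$-absorbing gadget $A_{v,c}$ in $G'$ with extra vertices in $V' \setminus V_H$, extra colours in $C' \setminus C_H$, and the gadgets pairwise vertex- and colour-disjoint outside common endpoints. By $\mu$-robust gadget-resilience there are at least $\mu^4 n^2 / 2^{23}$ $(v,c)$-absorbing gadgets forming a $5\mu n/4$-well-spread collection; a uniformly random one passes the reservation test with probability $p^6 q^3 \beta^7 = \Theta(\mu^{16})$, so the expected number of reservation-valid gadgets is of order $\mu^{20} n^2$ per $(v, c)$, which a bounded-differences argument (using the well-spread codegree bound to limit single-edge/vertex/colour influence on the count) concentrates simultaneously for all $(v, c)$ with high probability. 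A greedy selection then succeeds: at each step $O(\eta n)$ resources have been used, and by the well-spread bound at most $O(\mu \eta n^2)$ gadgets are ruled out, which is far less than the valid pool since the hierarchy permits $\eta \ll \mu^{19}$. With $\mathcal A$ in hand I would order its elements $A_1, \ldots, A_{|\mathcal A|}$ and greedily construct the $2|\mathcal A|$ completing paths and the $|\mathcal A| - 1$ connecting paths forming $\mathcal P$, and finally extend from the free vertex of $A_{|\mathcal A|}$ into the tail $T$; at each path-extension step there are of order $\mu^3 n$ candidate continuations by $\eps$-local edge-resilience and the preliminary pseudorandomness, far more than the cumulative $O(\mu n)$ total usage, so the construction can be pushed until only $\gamma n$ residual reserved vertices and colours remain.

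The main obstacle will be the gadget stage: both establishing concentration of the reservation-valid $(v, c)$-gadget count despite the overlap-induced dependencies between gadgets sharing edges, and then carrying out a simultaneous non-conflict selection across all $\Theta(\eta n)$ edges of $H$. The $5\mu n/4$-well-spread codegree bounds are essential for both, limiting the effect of any single edge, vertex, or colour on the count and ensuring the greedy step never runs out of valid gadgets.
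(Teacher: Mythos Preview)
Your template and gadget stages are essentially what the paper does (Lemmas~5.3--5.4), including the McDiarmid argument using the $5\mu n/4$-well-spread bound to control the effect of any single vertex, colour, or edge on the reservation-valid gadget count. The genuine gap is in your third and fourth stages.

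You propose to build $\mathcal P$ greedily and then to grow the tail $T$ by simple rainbow path extension ``until only $\gamma n$ residual reserved vertices and colours remain''. This cannot be justified by $\eps$-local edge-resilience: that hypothesis guarantees many edges \emph{inside} a set $V''$ with colours in $C''$, but says nothing about edges \emph{from a fixed vertex} $v$ into $V''$ with colours in $C''$. At a generic step of your extension you are at a specific vertex $v$, and you need an edge $vw$ with $w$ in the unused vertex set and $\phi(vw)$ in the unused colour set; no hypothesis prevents all the unused-colour edges at $v$ from landing on already-used vertices. So greedy path extension can simply stall with $\Theta(\mu n)$ residuals, far above $\gamma n$.

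The paper circumvents this by (i) refining the random reservation into six size-calibrated slices $V_{\mathrm{flex}}, V_{\mathrm{buff}}, V_{\mathrm{abs}}, V_{\mathrm{link}}, V'_{\mathrm{link}}$ (and similarly for colours), and (ii) giving the tail a structured form: a greedy rainbow matching $M$ in the leftover of $V_{\mathrm{abs}}, C_{\mathrm{abs}}$ (here local edge-resilience \emph{does} apply, since a matching edge has both endpoints free), joined together by length-four ``links'' with internal vertices in $V_{\mathrm{link}}$ and colours in $C_{\mathrm{link}}$. The links for both $\mathcal P$ and the tail are then found simultaneously by a hypergraph-matching argument (Theorem~4.5 applied in Lemma~5.5): one builds an $8$-uniform auxiliary hypergraph whose edges are candidate links, verifies near-regularity and small codegree, and obtains a $(\delta,\mathcal F)$-perfect matching covering all but $\delta n$ of the pairs and all but $\delta n$ of $V_{\mathrm{link}}\cup C_{\mathrm{link}}$; the few leftover pairs are then linked greedily from the small reserve slice $V'_{\mathrm{link}}, C'_{\mathrm{link}}$. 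This near-perfect packing is exactly what delivers the $\gamma$-bounded remainder, and it is not recoverable by a pure greedy argument.
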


The final ingredient in the proof of Theorem~\ref{mainthm} is the following lemma which follows from~\cite[Lemma 16]{GKMO20}, that enables us to find the long rainbow path whose leftover we absorb using the absorber from Lemma~\ref{main-absorber-lemma}.

\begin{lemma}\label{long-rainbow-path-lemma}
  Suppose $1/n \ll \gamma \ll p$, and let $q = \beta = p$.  For every 1-factorization $\phi$ of $K_n$ with vertex set $V$ and colour set $C$, if
  \begin{itemize}
  \item $V'$ is a $p$-random subset of $V$, 
  \item $C'$ is a $q$-random subset of $C$, and
  \item $G$ is a $\beta$-random subgraph of $K_n$,
  \end{itemize}
  then with high probability there is a rainbow path contained in $(V', C', G)$ with $\gamma$-bounded remainder.
\end{lemma}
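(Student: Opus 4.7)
The plan is to apply the cited result \cite[Lemma 16]{GKMO20} essentially as a black box to an appropriately restricted edge-coloured graph, after verifying the corresponding pseudorandomness-type hypotheses by a Chernoff argument.

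First, I would form the restricted graph $G^* \subseteq G$ whose edge set is
\[
E(G^*) \coloneqq \{e \in E(G) : \phi(e) \in C',\ \text{both endpoints of } e \text{ lie in } V'\},
\]
equipped with the inherited colouring, which is automatically proper since $\phi$ is. The aim is to find a long rainbow path inside $G^*$; any such path is automatically contained in $(V',C',G)$, so it only remains to control the remainder.

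Next I would establish quantitative regularity of $G^*$ by standard concentration. Fix any small $\gamma' \ll \gamma$. Since $V'$, $C'$ and $E(G)$ are chosen by independent $p$-random (resp.\ $q$-random, $\beta$-random) sampling with $p=q=\beta$, a Chernoff bound together with a union bound over vertices and colours gives, with high probability,
\[
|V'|=(1\pm\gamma')pn,\qquad |C'|=(1\pm\gamma')qn,
\]
and, for every $v\in V'$, the degree of $v$ in $G^*$ is $(1\pm\gamma')p^3 n$; similarly, for every $c\in C'$, the number of $c$-coloured edges of $G^*$ is $(1\pm\gamma')p^3 n/2$. These three tail bounds follow because each relevant quantity is a sum of $\Theta(n)$ independent indicators with mean $\Theta(p^3 n)$ (using $p\gg 1/n$), so Chernoff gives failure probability $\exp(-\Omega(p^3 n))$ per vertex/colour, which survives the union bound over $V\cup C$. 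In particular, $V(G^*)=V'$ and $\phi(G^*)=C'$ with high probability.

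Then I would apply \cite[Lemma 16]{GKMO20} to $G^*$ with its inherited proper edge-colouring to extract a rainbow path $P\subseteq G^*$ missing at most $\gamma n$ vertices of $V(G^*)=V'$ and at most $\gamma n$ colours of $\phi(G^*)=C'$; this is exactly the $\gamma$-bounded remainder condition. The hypotheses of that lemma are a near-regularity assumption on the colour classes and vertex degrees of the host coloured graph, which is precisely what the Chernoff computation above supplies once $\gamma'$ is chosen sufficiently small compared to the tolerance required by the lemma (which is safe since $\gamma \ll p$).

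The only genuine content is matching the quantitative hypotheses of \cite[Lemma 16]{GKMO20} to the parameters obtained from the Chernoff step; no switching, absorption, or combinatorial novelty is required, so I do not anticipate a serious obstacle beyond bookkeeping of the constants in the chain $1/n\ll \gamma\ll p$.
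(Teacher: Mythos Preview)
Your approach is essentially the same as the paper's: the paper does not give a proof but simply states that the lemma follows from \cite[Lemma~16]{GKMO20}, and your plan of restricting to $(V',C',G)$, verifying the relevant pseudorandom hypotheses by Chernoff, and then invoking that lemma as a black box is exactly the intended derivation. The only caveat is that you have guessed the hypotheses of \cite[Lemma~16]{GKMO20} to be near-regularity of degrees and colour classes; in fact that lemma is already formulated for a random slice of a $1$-factorization (so the Chernoff verification is largely absorbed into its proof), but this does not affect the correctness of your outline.
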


We conclude this section with a proof of Theorem~\ref{mainthm} in the case that~$n$ is even, assuming Lemmas~\ref{main-switching-lemma} and~\ref{main-absorber-lemma}.

\lateproof{Theorem~\ref{mainthm}, $n$ even case}
  By Lemma~\ref{main-switching-lemma}, it suffices to prove that if $\phi$ is an $\eps$-locally edge-resilient and $\mu$-robustly gadget-resilient 1-factorization, then there is a rainbow Hamilton path and a rainbow cycle containing all of the colours.

  Let $p = q = \beta$ as in Lemma~\ref{main-absorber-lemma}, let $V_1, V_2$ be a random partition of $V$ where $V_1$ is $p$-random and $V_2$ is $(1 - p)$-random, let $C_1, C_2$ be a random partition of $C$ where $C_1$ is $q$-random and $C_2$ is $(1 - q)$-random, and let $G_1$ and $G_2$ be $\beta$-random and $(1 - \beta)$-random subgraphs of $K_n$ such that $E(G_1)$ and $E(G_2)$ partition the edges of $K_n$.  By Lemma~\ref{main-absorber-lemma} applied with $V' = V_1$, $C' = C_1$, and $G' = G_1$, and by Lemma~\ref{long-rainbow-path-lemma} applied with $V' = V_2$, $C' = C_2$, and $G = G_2$, the following holds with high probability.  There exists
  \begin{enumerate}[(i)]
  \item a $36\gamma$-absorber $(\mathcal A, \mathcal P, T, H)$ such that $\bigcup_{A\in\mathcal A}A \cup \bigcup_{P\in \mathcal P}P \cup T$  is contained in $(V_1, C_1, G_1)$ with $\gamma$-bounded remainder, and
  \item a rainbow path $P'$ contained in $(V_2, C_2, G_2)$ with $\gamma$-bounded remainder.
  \end{enumerate}
  Now we fix an outcome of the random partitions $(V_1, V_2)$, $(C_1, C_2)$, and $(G_1, G_2)$ so that (i) and (ii) hold.  By Proposition~\ref{main-absorbing-proposition}, there is both a rainbow Hamilton path containing $P'$ and a rainbow cycle containing $P'$ and all of the colours in $C$, as desired.
\endproof

\section{Tools}
In this section, we collect some results that we will use throughout the paper.
\subsection{Probabilistic tools}
We will use the following standard probabilistic estimates.
\begin{lemma}[Chernoff Bound]\label{chernoff bounds}
  Let~$X$ have binomial distribution with parameters~$n,p$.
  Then for any $0 < t \leq np$,
  \begin{equation*}
    \Prob{|X - np| > t} \leq 2\exp\left(\frac{-t^2}{3np}\right).
  \end{equation*}
\end{lemma}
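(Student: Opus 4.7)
The plan is to derive both the upper tail $\Prob{X \geq np + t}$ and the lower tail $\Prob{X \leq np - t}$ separately, each bounded by $\exp(-t^2/(3np))$, and then union bound to get the factor of $2$. For the upper tail I would use the standard Cramér--Chernoff moment generating function method: for any $\lambda > 0$, Markov's inequality applied to $e^{\lambda X}$ gives
\[
\Prob{X \geq np + t} \leq e^{-\lambda(np+t)}\Expect{e^{\lambda X}}.
\]
Since $X$ is a sum of $n$ independent Bernoulli$(p)$ variables, $\Expect{e^{\lambda X}} = (1 - p + p e^{\lambda})^n \leq \exp(np(e^{\lambda} - 1))$ using $1 + x \leq e^x$.

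Next I would optimize by choosing $\lambda = \ln(1 + t/(np))$, which reduces the bound to $\exp(-np \cdot h(t/(np)))$ where $h(x) \coloneqq (1+x)\ln(1+x) - x$. The analytic core of the argument is then the elementary inequality
\[
h(x) \geq \frac{x^2}{3} \qquad \text{for } 0 < x \leq 1,
\]
which handles the upper tail exactly when $t \leq np$. This can be verified either by showing that the second derivative of $h(x) - x^2/3$ is nonnegative on $(0,1]$, or by direct Taylor expansion combined with a monotonicity check at the endpoint $x = 1$. For the lower tail, a symmetric argument using $\lambda < 0$ (equivalently, applying the upper-tail calculation to the binomial variable $n - X$ with parameter $1-p$) gives the matching bound, and summing the two tails produces the factor of $2$ in the statement.

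The main obstacle is not conceptual but rather the sharp tracking of constants: the naive Taylor expansion $h(x) = x^2/2 - x^3/6 + O(x^4)$ suggests a constant close to $1/2$ near the origin, but one must ensure the inequality $h(x) \geq x^2/3$ holds \emph{uniformly} up to $x = 1$, which is precisely what the hypothesis $t \leq np$ translates to. This is why the stated constant is $1/3$ rather than $1/2$, and why the hypothesis $t \leq np$ cannot be dropped (for larger $t$ one would need a Bennett- or Bernstein-type bound with an additional linear correction).
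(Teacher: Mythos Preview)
The paper states this lemma as a standard tool without proof, so there is no approach to compare against; your outline is the canonical Cram\'er--Chernoff argument and is correct in substance.

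Two minor corrections are worth noting. First, the claim that the second derivative of $g(x)\coloneqq h(x)-x^2/3$ is nonnegative on $(0,1]$ is false: one has $g''(x)=\frac{1}{1+x}-\frac{2}{3}$, which changes sign at $x=1/2$. The inequality $h(x)\geq x^2/3$ still holds, but the correct justification is via the first derivative: $g'(x)=\ln(1+x)-2x/3$ vanishes at $0$, increases on $(0,1/2)$, then decreases to $g'(1)=\ln 2-2/3>0$, so $g'>0$ on $(0,1]$ and $g$ is strictly increasing from $g(0)=0$. Second, the parenthetical suggestion to handle the lower tail by applying the upper-tail bound to $n-X\sim\mathrm{Bin}(n,1-p)$ yields $\exp\bigl(-t^2/(3n(1-p))\bigr)$, which is not the desired bound in terms of $np$. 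The direct $\lambda<0$ route you mention first is the one that works: it produces $\exp\bigl(-np\,\tilde h(t/np)\bigr)$ with $\tilde h(x)=x+(1-x)\ln(1-x)$, and since $\tilde h''(x)=1/(1-x)\geq 1$ one gets $\tilde h(x)\geq x^2/2\geq x^2/3$, so the lower tail is in fact sharper than the upper.
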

  Let $X_1, \dots, X_m$ be independent random variables taking values in $\mathcal X$, and let $f : \mathcal X^m \rightarrow \mathbb R$.
  If for all $i\in [m]$ and $x'_i, x_1, \dots, x_m \in \mathcal X$, we have
    \begin{equation*}
      |f(x_1, \dots, x_{i - 1}, x_i, x_{i + 1}, \dots, x_m) - f(x_1, \dots, x_{i - 1}, x'_i, x_{i + 1}, \dots, x_m)| \leq c_i,
    \end{equation*}
    then we say $X_i$ \textit{affects} $f$ by at most $c_i$.
\begin{theorem}[McDiarmid's Inequality]\label{mcd}
  If $X_1, \dots, X_m$ are independent random variables taking values in $\mathcal X$ and $f : \mathcal X^m \rightarrow \mathbb R$ is such that $X_i$ affects $f$ by at most $c_i$ for all $i\in [m]$, then for all $t > 0$,
  \begin{equation*}
    \Prob{|f(X_1, \dots, X_m) - \Expect{f(X_1, \dots, X_m)}| \geq t} \leq \exp\left(-\frac{t^2}{\sum_{i=1}^m c^2_i}\right).
  \end{equation*}
\end{theorem}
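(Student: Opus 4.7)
The plan is to apply the Azuma--Hoeffding martingale inequality to the Doob martingale of $f$. Set $\cF_{0}\coloneqq\{\varnothing,\Omega\}$ and $\cF_{i}\coloneqq\sigma(X_{1},\dots,X_{i})$ for $i\in[m]$, and define $Z_{i}\coloneqq\ex[f(X_{1},\dots,X_{m})\mid\cF_{i}]$, so that $Z_{0}=\ex[f(X_{1},\dots,X_{m})]$, $Z_{m}=f(X_{1},\dots,X_{m})$, and the martingale differences $D_{i}\coloneqq Z_{i}-Z_{i-1}$ telescope to $f(X_{1},\dots,X_{m})-\ex[f(X_{1},\dots,X_{m})]$.

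The crux of the argument is a bounded-differences estimate: conditional on $\cF_{i-1}$, the random variable $D_{i}$ has mean zero and takes values in an interval of length at most $c_{i}$. To establish this, set $g_{i}(y)\coloneqq\ex[f(X_{1},\dots,X_{i-1},y,X_{i+1},\dots,X_{m})\mid\cF_{i-1}]$. Because the $X_{j}$'s are independent, $Z_{i}=g_{i}(X_{i})$, and $Z_{i-1}$ is the expectation of $g_{i}$ against the marginal of $X_{i}$; moreover, the hypothesis that $X_{i}$ affects $f$ by at most $c_{i}$ gives $|g_{i}(y)-g_{i}(y')|\leq c_{i}$ for all $y,y'\in\mathcal{X}$, which bounds the oscillation of $D_{i}$ on $\cF_{i-1}$ by $c_{i}$.

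Once the bounded-differences estimate is in hand, Hoeffding's lemma applied conditionally on $\cF_{i-1}$ yields $\ex[e^{\lambda D_{i}}\mid\cF_{i-1}]\leq\exp(\lambda^{2}c_{i}^{2}/8)$ for every $\lambda\in\bR$. Iterating this via the tower rule, and then optimising $\lambda>0$ in the exponential Markov inequality $\pr[\sum_{i}D_{i}\geq t]\leq e^{-\lambda t}\ex[e^{\lambda\sum_{i}D_{i}}]$, gives the one-sided tail $\pr[f-\ex f\geq t]\leq\exp(-2t^{2}/\sum_{i}c_{i}^{2})$. Running the same argument with $-f$ in place of $f$ and combining via a union bound produces the corresponding two-sided inequality, which after absorbing the leading constant into the exponent yields the form stated in the lemma.

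The only step that requires any care is the bounded-differences estimate on $D_{i}$: independence of the coordinates is essential, since it is what lets one rewrite $Z_{i-1}$ as an average of $g_{i}(X_{i})$ over the marginal of $X_{i}$, so that the conditional oscillation of $D_{i}$ reduces cleanly to the deterministic hypothesis on $f$. Everything else -- Hoeffding's lemma and the Cram\'er--Chernoff exponential Markov step -- is standard and routine.
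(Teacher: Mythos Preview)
The paper does not actually prove this statement: McDiarmid's Inequality is quoted in Section~4 as a standard probabilistic tool, with no proof supplied. Your argument via the Doob martingale, the bounded-differences estimate from independence, Hoeffding's lemma, and the Cram\'er--Chernoff method is the standard proof and is correct; the only minor remark is that the classical argument yields the constant $2$ in the exponent (and a factor $2$ in front from the union bound), so the inequality as stated in the paper is a slight weakening of what you actually prove.
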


\subsection{Hypergraph matchings}
When we build our absorber in the proof of Lemma~\ref{main-absorber-lemma}, we seek to efficiently use the vertices, colours, and edges of our random subsets $V'\subseteq V$, $C'\subseteq C$, $E'\subseteq E$, and to do this we make use of the existence of large matchings in almost-regular hypergraphs with small codegree.
In fact, we will need the stronger property that there exists a large matching in such a hypergraph which is well-distributed with respect to a specified collection of vertex subsets.
We make this precise in the following definition.
  Given a hypergraph~$\cH$ and a collection of subsets~$\cF$ of~$V(\cH)$, we say a matching~$\cM$ in~$\cH$ is $(\gamma,\cF)$\textit{-perfect} if for each $F\in\cF$, at most $\gamma\cdot\max\{|F|,|V(\cH)|^{2/5}\}$ vertices of~$\cF$ are left uncovered by~$\cM$.
The following theorem is a consequence of Theorem 1.2 in~\cite{AY05}, and is based on a result of Pippenger and Spencer~\cite{PS89}.
\begin{theorem}\label{hypergraph-matching-thm}
  Suppose $1/n \ll \eps \ll \gamma \ll 1/r$.  Let $\mathcal H$ be an $r$-uniform hypergraph on $n$ vertices such that for some $D\in\mathbb N$, we have $d_{\mathcal H}(x) = (1 \pm \eps)D$ for all $x\in V(\mathcal H)$ and $\Delta^c(\mathcal H) \leq D / \log^{9r}n$.  If $\mathcal F$ is a collection of subsets of $V(\mathcal H)$ such that $|\mathcal F| \leq n^{\log n}$, then there exists a $(\gamma, \mathcal F)$-perfect matching.
\end{theorem}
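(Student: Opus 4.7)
The plan is to derive the statement from the standard Rödl nibble / Pippenger--Spencer framework, specifically from Theorem~1.2 of~\cite{AY05}, which produces an almost-perfect matching in an $r$-uniform almost-regular hypergraph with small codegree that is pseudorandomly distributed with respect to a prescribed family of vertex subsets. The hypotheses $d_\cH(x) = (1\pm\eps)D$ and $\Delta^c(\cH) \leq D/\log^{9r}n$ match exactly the conditions required by that theorem, and the polylogarithmic codegree slack is what allows the concentration estimates in the nibble to beat any quasi-polynomial union bound.

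The one subtlety is that the family $\cF$ in our statement may contain very small sets: the nibble produces a matching which, for each target set $F$, leaves uncovered a number of vertices that is tightly concentrated around a value at most $\gamma|F|$, but this conclusion is meaningful only when $|F|$ is larger than the concentration error. This is precisely why the threshold $|V(\cH)|^{2/5}$ appears. To handle the two regimes uniformly, I would first split $\cF = \cF_{\mathrm{small}} \cup \cF_{\mathrm{large}}$ according to whether $|F| < n^{2/5}$ or $|F| \geq n^{2/5}$, and then replace each $F \in \cF_{\mathrm{small}}$ by an arbitrary superset $\widetilde{F} \supseteq F$ of size exactly $\lceil n^{2/5}\rceil$, obtained by padding with extra vertices of $\cH$. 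Let $\widetilde{\cF} \coloneqq \cF_{\mathrm{large}} \cup \{\widetilde{F} : F \in \cF_{\mathrm{small}}\}$; note $|\widetilde{\cF}| \leq |\cF| \leq n^{\log n}$ and every set in $\widetilde{\cF}$ has size at least $n^{2/5}$.

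Now I would apply the Alon--Yuster pseudorandom matching theorem to $\cH$ with the family $\widetilde{\cF}$. Since $n^{2/5}$ is polynomially large while the concentration error in a single round of the nibble is polylogarithmic, and since $\log(|\widetilde\cF|) \leq (\log n)^2 \ll \gamma^2 n^{2/5}$, a union bound over $\widetilde\cF$ shows that with positive probability (in fact, with high probability) the produced matching $\cM$ leaves uncovered at most $\gamma|\widetilde F|$ vertices of each $\widetilde F \in \widetilde\cF$ simultaneously. For $F \in \cF_{\mathrm{large}}$, this directly gives the required bound $\gamma |F| = \gamma \max\{|F|, n^{2/5}\}$. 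For $F \in \cF_{\mathrm{small}}$, the number of vertices of $F$ left uncovered is at most the number of vertices of $\widetilde F$ left uncovered, which is at most $\gamma|\widetilde F| = \gamma\lceil n^{2/5}\rceil \leq \gamma \max\{|F|, n^{2/5}\}$ (after slightly adjusting $\gamma$), as desired.

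The main obstacle is really just verifying that the quoted Alon--Yuster theorem is stated in a form that accommodates a family of size up to $n^{\log n}$; if their published version only permits polynomially many constraints, one would instead reprove the pseudorandom nibble directly, running $\Theta(\log n)$ nibble rounds and applying Freedman's martingale inequality or McDiarmid (Theorem~\ref{mcd}) after each round to track $|F \setminus V(\cM)|$ for each $F \in \widetilde\cF$. The key quantitative observation is that a single round of the nibble of density $\tau/D$ removes each vertex from the "uncovered" set with probability $\Theta(\tau)$, and the change in $|F \setminus V(\cM)|$ caused by a single edge decision is $O(r)$, so McDiarmid gives a tail bound of $\exp(-\Omega(\tau^2 |F|))$, which is $\ll n^{-2\log n}$ whenever $|F| \geq n^{2/5}$ and $\tau$ is a suitable polylogarithmic quantity. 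This leaves ample slack to union bound over $\widetilde\cF$, completing the argument.
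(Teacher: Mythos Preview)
Your approach is correct and matches the paper's: the paper does not give a proof of this theorem at all, stating only that it ``is a consequence of Theorem~1.2 in~\cite{AY05}, and is based on a result of Pippenger and Spencer~\cite{PS89}.'' Your sketch of the derivation from Alon--Yuster, including the padding trick for sets of size below $n^{2/5}$, is exactly the kind of routine reduction the paper is implicitly deferring to.
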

We will use Theorem~\ref{hypergraph-matching-thm} in the final step of constructing an absorber (see Lemma~\ref{linking-lemma}).
We construct an auxiliary hypergraph~$\cH$ whose edges represent structures we wish to find, and a large well-distributed matching in~$\cH$ corresponds to an efficient allocation of vertices, colours, and edges of the $1$-factorization to construct almost all of these desired structures.
We remark that this is also a key strategy in the proof of Lemma~\ref{long-rainbow-path-lemma}, and was first used in~\cite{KKKO20}.
\subsection{Robustly matchable bipartite graphs of constant degree}
In this subsection, we prove that there exist large bipartite graphs which are robustly matchable as in Definition~\ref{def:rmbg}, and have constant maximum degree.
\begin{defin}
  Let $m \in \mathbb N$.
  \begin{itemize}
  \item An $RMBG(3m, 2m, 2m)$ is a bipartite graph $H$ with bipartition $(A, B_1 \cup B_2)$ where $|A| = 3m$ and $|B_1| = |B_2| = 2m$ such that for any $B' \subseteq B_1$ of size $m$, there is a perfect matching in $H - B'$.  In this case, we say $H$ is \textit{robustly matchable} with respect to $B_1$, and that~$B_{1}$ is the identified \textit{flexible set}.
  \item A $2RMBG(7m, 2m)$ is a bipartite graph $H$ with bipartition $(A, B)$ where $|A| = |B| = 7m$ such that $H$ is robustly matchable with respect to sets $A'\subseteq A$ and $B'\subseteq B$ where $|A'| = |B'| = 2m$.
  \end{itemize}
\end{defin}

By~\cite[Lemma~10.7]{M18}, for all sufficiently large~$m$ there exists an $RMBG(3m, 2m, 2m)$ with maximum degree at most 100.
We use a one-sided (there is one flexible set) $RMBG(3m,2m,2m)$ exhibited in~\cite[Corollary~10]{GKMO20} in which each of the vertex classes are regular, to construct a $256$-regular two-sided (in that we identify a flexible set on each side of the vertex bipartition) $2RMBG(7m,2m)$.
\begin{lemma}\label{2rmbg-lemma}
  For all sufficiently large $m$, there is a $2RMBG(7m, 2m)$ that is $256$-regular.
\end{lemma}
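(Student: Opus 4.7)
\textbf{Proof Plan for Lemma~\ref{2rmbg-lemma}.}
The plan is to assemble two copies of a one-sided $RMBG(3m,2m,2m)$, joined by an auxiliary perfect matching, and then pad the result to achieve $256$-regularity. Invoke~\cite[Corollary~10]{GKMO20} to obtain a one-sided $RMBG(3m,2m,2m)$, call it $H_0$, whose two vertex classes are each regular with constant degrees $d_U$ on the part of size $3m$ and $d_V$ on the part of size $4m$; the regularity equation forces $3d_U=4d_V$, and~\cite[Lemma~10.7]{M18} gives $d_U,d_V\le 100$.

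Construct $H'$ on bipartition $(A,B)$ with $|A|=|B|=7m$ as follows. Write $A=U_1\sqcup V_1^{(2)}\sqcup V_2^{(2)}$ and $B=U_2\sqcup V_1^{(1)}\sqcup V_2^{(1)}$ with $|U_i|=3m$ and $|V_1^{(i)}|=|V_2^{(i)}|=2m$. Place one copy $H_1$ of $H_0$ on $(U_1,V_1^{(1)}\cup V_2^{(1)})$ with flexible set $V_1^{(1)}$, and another copy $H_2$ on $(V_1^{(2)}\cup V_2^{(2)},U_2)$ with flexible set $V_1^{(2)}$; then add a perfect matching $M$ between $V_1^{(1)}$ and $V_1^{(2)}$. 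Set $A'=V_1^{(2)}$ and $B'=V_1^{(1)}$, both of size $2m$.

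To verify robust matchability, fix $X\subseteq A'$ and $Y\subseteq B'$ with $|X|=|Y|=k\le m$. The set of $v\in V_1^{(2)}\setminus X$ with $M(v)\in V_1^{(1)}\setminus Y$ has size at least $|V_1^{(2)}\setminus X|-|Y|\ge 2m-2k\ge m-k$, so we may pick $Z_2\subseteq V_1^{(2)}\setminus X$ of size $m-k$ with $Z_1:=M(Z_2)\subseteq V_1^{(1)}\setminus Y$. Now $X^{*}:=X\cup Z_2$ and $Y^{*}:=Y\cup Z_1$ each have size exactly $m$ and lie in the relevant flexible sets; the one-sided $RMBG$ property yields perfect matchings $M_1$ of $H_1-Y^{*}$ (saturating $U_1\cup(V^{(1)}\setminus Y^{*})$) and $M_2$ of $H_2-X^{*}$ (saturating $U_2\cup(V^{(2)}\setminus X^{*})$). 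Then $M_1\cup M_2\cup\{vM(v):v\in Z_2\}$ is a perfect matching of $H'-(X\cup Y)$, proving $H'$ is a $2RMBG(7m,2m)$.

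Finally, enforce regularity. The graph $H'$ has maximum degree at most $d_U+d_V+1\le 201$, and we want degree exactly $256$ everywhere; the per-vertex deficit $d_v:=256-\deg_{H'}(v)$ is a non-negative integer, and the sums over the two sides agree by symmetry. Because $|A|=|B|=7m\gg 256$, the bipartite complement $K_{7m,7m}\setminus H'$ is nearly $(7m-201)$-regular, and a routine application of the Gale--Ryser theorem (or a greedy/Hall-type argument) inside this complement produces a simple bipartite graph $R$ with $\deg_R(v)=d_v$ for every $v$. Adding $R$ to $H'$ gives a $256$-regular bipartite graph, and since adding edges can only create perfect matchings in subgraphs where they previously existed, robust matchability is preserved. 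The main obstacle is the choice of $Z_2$ matched through $M$ into $V_1^{(1)}\setminus Y$: everything else is bookkeeping, but this step is where the construction genuinely uses that the flexible sets have double the required size, giving the counting slack $2m-2k\ge m-k$.
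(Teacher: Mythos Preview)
Your overall strategy --- two copies of a one-sided $RMBG(3m,2m,2m)$, a perfect matching between their flexible sets, and the same Hall-type counting to verify robust matchability --- is exactly the paper's approach, and your matchability argument is correct. Two points need attention.

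First, a citation slip: \cite[Corollary~10]{GKMO20} produces a $(256,192)$-biregular $RMBG(3m,2m,2m)$ (degree $256$ on the $3m$-side, $192$ on the $4m$-side), not one with $d_U,d_V\le 100$; the $\le 100$ bound belongs to the non-biregular construction in~\cite[Lemma~10.7]{M18}, and you have conflated the two. Fortunately your argument survives: with the true degrees, your $H'$ has maximum degree exactly $256$ (attained on $U_1,U_2$), so the deficits $256-\deg_{H'}(v)$ remain nonnegative.

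Second, the paper avoids your padding step entirely. Rather than linking the flexible sets by a bare perfect matching and then invoking a degree-sequence realisation argument in the complement, the paper joins the two $4m$-sides by a $64$-regular bipartite graph $H''$ that \emph{contains} a perfect matching between the flexible sets; since $192+64=256$, this yields $256$-regularity directly. (Such an $H''$ is trivial to build: take one perfect matching of $K_{4m,4m}$ matching flexible set to flexible set, then add $63$ further edge-disjoint perfect matchings from a $1$-factorisation.) Your padding claim is true --- any bipartite graph on equal parts with $\Delta\le k$ embeds in a $k$-regular bipartite graph on the same parts --- but it is not literally Gale--Ryser (which concerns a complete bipartite host), and you would need either a reference or the short max-flow argument to make it rigorous.
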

\begin{proof}
Suppose that $m\in\bN$ is sufficiently large.
  By~\cite[Corollary~10]{GKMO20}, there exists an $RMBG(3m, 2m, 2m)$ that is (256, 192)-regular (i.e.\ all vertices in the first vertex class have degree~$256$ and all vertices in the second vertex class of have degree~$192$). Let~$H$ and~$H'$ be two vertex-disjoint isomorphic copies of a $(256, 192)$-regular $RMBG(3m, 2m, 2m)$, and let $(A, B_1\cup B_2)$ and $(A', B'_1 \cup B'_2)$ be the bipartitions of $H$ and $H'$ respectively such that $H$ is robustly matchable with respect to $B_1$ and $H'$ is robustly matchable with respect to $B'_1$.

  Let $H''$ be a 64-regular bipartite graph with bipartition $(B_1 \cup B_2, B'_1 \cup B'_2)$ such that $H''[B_1 \cup B'_1]$ contains a perfect matching $M$.  We claim that $H\cup H'\cup H''$ is robustly matchable with respect to $B_1$ and $B'_1$.  To that end, let $X\subseteq B_1$ and $Y\subseteq B'_1$ such that $|X| = |Y| \leq m$.  It suffices to show that $H\cup H'\cup H'' - (X\cup Y)$ has a perfect matching.  Since $H''[B_1\cup B'_1]$ contains a perfect matching, $H''[B_1\cup B'_1] - (X\cup Y)$ contains a matching of size at least $2m - |X| - |Y| = 2(m - |X|)$.  Thus, there exists a matching $M'$ in $H''[B_1\cup B'_1] - (X\cup Y)$ of size $m - |X|$.  Let $X' \coloneqq X \cup (B_1\cap V(M'))$ and $Y' \coloneqq Y \cup (B'_1 \cap V(M'))$, and note that $|X'| = |Y'| = m$.  Since $H$ is an $RMBG(3m, 2m, 2m)$, $H - X'$ has a perfect matching $M_1$, and similarly $H' - Y'$ has a perfect matching $M_2$.  Now $M' \cup M_1 \cup M_2$ is a perfect matching in $H\cup H'\cup H'' - (X\cup Y)$, as required.  Since $H\cup H'\cup H''$ is 256-regular, the result follows.
\end{proof}

\section{Constructing the absorber: proof of Lemma~\ref{main-absorber-lemma}}
\label{absorption-section}
Throughout this section, let $\phi$ be an $\eps$-locally edge-resilient and $\mu$-robustly gadget resilient 1-factorization of $K_n$ with vertex set $V$ and colour set $C$, let $E\coloneqq E(K_{n})$, and recall
\begin{equation*}
  1/n \ll \eps  \ll \gamma \ll \eta \ll \mu \ll 1.
\end{equation*}

Let $\tilde H$ be a 256-regular $2RMBG(7m, 2m)$  where $2m = (\eta - 2\eps)n$, which exists by Lemma~\ref{2rmbg-lemma}.
We define the following probabilities:

\begin{minipage}{.5\linewidth}
  \begin{eqnarray}
    p_{\mathrm{flex}} &\coloneqq& \eta,\nonumber\\
    p_{\mathrm{buff}} &\coloneqq& 5\eta / 2,\nonumber\\
    p_{\mathrm{abs}} &\coloneqq& 6|E(\tilde H)|/n + 2\mu,\label{slicearray}\\
    p_{\mathrm{link}} &\coloneqq& 9|E(\tilde H)|/n + 3\mu,\nonumber\\
    p_{\mathrm{link}}' &\coloneqq& \gamma/3,\nonumber
  \end{eqnarray}
\end{minipage}%
\begin{minipage}{.5\linewidth}
  \begin{eqnarray*}
    q_{\mathrm{flex}} &\coloneqq& \eta,\nonumber\\
    q_{\mathrm{buff}} &\coloneqq& 5\eta / 2,\nonumber\\
    q_{\mathrm{abs}} &\coloneqq& 3|E(\tilde H)|/n + \mu,\nonumber\\
    q_{\mathrm{link}} &\coloneqq& 12|E(\tilde H)|/n + 4\mu,\nonumber\\  
    q_{\mathrm{link}}' &\coloneqq& \gamma/3,\nonumber\\
  \end{eqnarray*}
\end{minipage}

\noindent and we let $p_{\mathrm{main}} \coloneqq 1 - p_{\mathrm{flex}} - p_{\mathrm{buff}}- p_{\mathrm{abs}} - p_{\mathrm{link}} - p'_{\mathrm{link}}$ and $q_{\mathrm{main}} \coloneqq 1 - q_{\mathrm{flex}} -q_{\mathrm{buff}}- q_{\mathrm{abs}} - q_{\mathrm{link}} - q'_{\mathrm{link}}$.  Note that $p_{\mathrm{main}} = q_{\mathrm{main}}$, and let $\beta \coloneqq 1 - p_{\mathrm{main}}$.

\begin{defin}
An \textit{absorber partition of~$V$,~$C$, and~$K_{n}$} is defined as follows:
\begin{equation}\label{absorber-random-partition}
  \begin{split}
    &V = V_{\mathrm{main}}\, \dot{\cup}\, V_{\mathrm{flex}} \,\dot{\cup}\, V_{\mathrm{buff}}\, \dot{\cup}\, V_{\mathrm{abs}} \,\dot{\cup}\, V_{\mathrm{link}} \,\dot{\cup}\, V_{\mathrm{link}}',\ \text{and}\\
    &C = C_{\mathrm{main}} \,\dot{\cup}\, C_{\mathrm{flex}} \,\dot{\cup}\, C_{\mathrm{buff}} \,\dot{\cup}\, C_{\mathrm{abs}} \,\dot{\cup}\, C_{\mathrm{link}} \,\dot{\cup}\, C_{\mathrm{link}}', 
  \end{split}
\end{equation}
where $V_{\mathrm{main}}$ is $p_{\mathrm{main}}$-random,~$V_{\mathrm{flex}}$ is $p_{\mathrm{flex}}$-random etc, and the sets of colours are defined analogously.  Let $V' \coloneqq V\setminus V_{\mathrm{main}}$, $C' \coloneqq C\setminus C_{\mathrm{main}}$, and let $G'$ be a $\beta$-random subgraph of~$K_{n}$.
\end{defin}
Note that~$V'$,~$C'$, and~$G'$ satisfy~(R1)--(R3) in the statement of Lemma~\ref{main-absorber-lemma}.\COMMENT{Note that $|E(\Tilde{H})|=256\cdot7m=1792m=896\cdot2m=896(\eta-2\eps)n=896\eta n-1792\eps n$. Write $p \coloneqq 1-p_{\text{main}}$ and $q\coloneqq 1-q_{\text{main}}$. Then clearly $p=q=\beta$,~$V'$ is $p$-random,~$C'$ is $q$-random,~$G'$ is $\beta$-random, and $p=\eta+5\eta/2+(6|E(\Tilde{H})|/n+2\mu)+(9|E(\Tilde{H})|/n+3\mu)+\gamma/3=5\mu+7\eta/2+15|E(\Tilde{H})|/n+\gamma/3=5\mu+7\eta/2+13440\eta-26880\eps +\gamma/3=5\mu+26887\eta/2+\gamma/3-26880\eps$.}
\subsection{Overview of the proof}\label{overviewsec}

We now overview our strategy for proving Lemma~\ref{main-absorber-lemma}.  First we need the following definitions.
A \textit{link} is a rainbow path of length 4 with internal vertices in $V_{\mathrm{link}} \cup V'_{\mathrm{link}}$, ends in $V_{\mathrm{abs}}$, and colours and edges in $C_{\mathrm{link}} \cup C'_{\mathrm{link}}$ and $G'$, respectively. 
A link with internal vertices in $V_{\mathrm{link}}$ and colours in $C_{\mathrm{link}}$ is a \textit{main link}, and a link with internal vertices in $V'_{\mathrm{link}}$ and colours in $C'_{\mathrm{link}}$ is a \textit{reserve link}.
If $M$ is a matching and $\mathcal P = \{P_e\}_{e\in E(M)}$ is a collection of vertex-disjoint links such that $\bigcup_{P\in\mathcal P}P$ is rainbow and $P_{uv}$ has ends $u$ and $v$ for every $uv\in E(M)$, then $\mathcal P$ \textit{links} $M$.

We aim to build a $36\gamma$-absorber $(\mathcal A, \mathcal P, T, H)$ such that $\bigcup_{A\in \mathcal A} A \cup \bigcup_{P\in\mathcal P} P \cup T$ is contained in $(V', C', G')$ with $\gamma$-bounded remainder and $H\cong \tilde H$.  
First, we show (see Lemma~\ref{absorbing-template-lemma}) that with high probability there is a $36\gamma$-absorbing template $H\cong \tilde H$, where
\begin{itemize}
\item $H$ has flexible sets $(V'_{\mathrm{flex}}, C'_{\mathrm{flex}}, G')$ and $(V'_{\text{flex}},C'_{\text{flex}})$ is contained in $(V_{\mathrm{flex}}, C_{\mathrm{flex}})$ with $3\eps$-bounded remainder, and
\item $H$ has buffer sets $V'_{\mathrm{buff}}$ and $C'_{\mathrm{buff}}$ where $(V'_{\mathrm{buff}}, C'_{\mathrm{buff}})$ is contained in $(V_{\mathrm{buff}}, C_{\mathrm{buff}})$ with $6\eps$-bounded remainder.
\end{itemize}

Then, we show that with high probability, there exists an $H$-absorber $(\mathcal A, \mathcal P)$ where
\begin{itemize}
\item for every $vc\in E(H)$, the $(v, c)$-absorbing gadget $A_{v, c} \in \mathcal A$ uses vertices, colours, and edges in $V_{\mathrm{abs}}$, $C_{\mathrm{abs}}$, and $G'$, respectively, and
\item every $P\in\mathcal P$ is a link.
\end{itemize}
In particular, if $\mathcal A = \{A_1, \dots, A_k\}$, where~$A_{i}$ is a $(v_{i},c_{i})$-absorbing gadget, then $\mathcal P$ links the matching $M_1\cup M_2 \cup M_3$, where~$V(M_1)$,~$V(M_2)$, and~$V(M_3)$ are pairwise vertex-disjoint, and
\begin{itemize}
\item [(M1)] $M_{1}=\{r_{1}s_{1},\dots,r_{k}s_{k}\}$, where~$r_{i}$ and~$s_{i}$ are non-adjacent vertices of the $4$-cycle in~$A_{i}$, for each $i\in[k]$,
\item [(M2)] $M_{2}=\{w_{1}x_{1},\dots,w_{k}x_{k}\}$, where~$w_{i}$ is a non-$v_{i}$ vertex of the triangle in~$A_{i}$ and~$x_{i}$ is a vertex of the $4$-cycle in~$A_{i}$, for each $i\in[k]$, and
\item [(M3)] $M_{3}=\{y_{1}z_{2},\dots,y_{k-1}z_{k}\}$, where~$y_{i}$ is a non-$v_{i}$ vertex of the triangle in~$A_{i}$ for each $i\in[k-1]$, and~$z_{i}$ is a vertex of the $4$-cycle in~$A_{i}$ for each $i\in[k]\setminus\{1\}$.
\end{itemize}

Finally, letting $V'_{\mathrm{abs}}$ and $C'_{\mathrm{abs}}$ 
be the vertices and colours 
in $V_{\mathrm{abs}}$ and $C_{\mathrm{abs}}$ 
not used by any $(v, c)$-absorbing gadget in $\mathcal A$, we show that with high probability there is a tail $T$ for $(\mathcal A, \mathcal P)$ where $T$ is the union of 
\begin{itemize}
\item a rainbow matching $M$ contained in ($V'_{\mathrm{abs}}, C'_{\mathrm{abs}}, G')$ with $6\eps$-bounded remainder and
\item a collection $\mathcal T$ of vertex-disjoint links where all but one vertex in~$V(M)$ is the end of precisely one link.
\end{itemize}
In particular, if $E(M) = \{a_{1}b_{1}, \dots, a_{\ell} b_{\ell}\}$, then $\mathcal T$ links $M_4$, where
\begin{itemize}
\item [(M4)] $M_4$ is a matching of size~$\ell$ with edges $b_{i} a_{i + 1}$ for every $i \in [\ell - 1]$ and an edge $va_{1}$ where $v$ is one of the two vertices used by a gadget in $\mathcal A$ that is not in a link in $\mathcal P$.
\end{itemize}
\begin{figure}
  \centering
    \begin{tikzpicture}[scale = 0.9]
    \tikzstyle{vtx}=[draw, fill, circle, scale = .5];
    \tikzstyle{link}=[snake=zigzag];

    \node[label=left:{\Large $v_1$}, vtx] (v) at (0, 0) {};

    \node[vtx] (t1) at ($(v) + (30:1)$) {};
    \node[vtx] (b1) at ($(v) + (-30:1)$) {};

    \node[vtx] (t2) at ($(t1) + (1, 0)$) {};
    \node[vtx] (b2) at ($(b1) + (1, 0)$) {};

    \node[vtx] (t3) at ($(t2) + (1, 0)$) {};
    \node[vtx] (b3) at ($(b2) + (1, 0)$) {};

    \draw (v) -- (t1);
    \draw (t1) -- (b1);
    \draw (b1)-- (v);

    \draw (t2) -- (b2);
    \draw (t2) -- (t3);
    \draw (b2) -- (b3);
    \draw[OwlRed] (t3) [ultra thick] -- (b3);

    \draw[link] (t1) -- (b2);
    \draw[link] (t2) -- (b3);

    \node[vtx] (t'1) at ($(t3) + (1, 0)$) {};
    \node[vtx] (b'1) at ($(b3) + (1, 0)$) {};

    \node[vtx] (t'2) at ($(t'1) + (1, 0)$) {};
    \node[vtx] (b'2) at ($(b'1) + (1, 0)$) {};

    \node[vtx] (t'3) at ($(t'2) + (1, 0)$) {};
    \node[vtx] (b'3) at ($(b'2) + (1, 0)$) {};

    \draw (v) .. controls ($(v) + (0, 1)$) and ($(t'1) + (0, .5)$) .. (t'1);
    \draw  (v) .. controls ($(v) + (0, -1)$) and ($(b'1) + (0, -.5)$) .. (b'1);
    \draw (t'1) -- (b'1);

    \draw (t'2) -- (b'2);
    \draw (t'2) -- (t'3);
    \draw (b'2) -- (b'3);
    \draw (t'3) [OwlBlue, ultra thick] -- (b'3);

    \draw[link] (t'1) -- (b'2);
    \draw[link] (t'2) -- (b'3);

    \draw[link] (t3) -- (b'1);

    \node[vtx] (t''1) at ($(t'3) + (2.5, 0)$) {};
    \node[vtx] (b''1) at ($(b'3) + (2.5, 0)$) {};

    \node[vtx] (t''2) at ($(t''1) + (1, 0)$) {};
    \node[vtx] (b''2) at ($(b''1) + (1, 0)$) {};

    \node[vtx] (t''3) at ($(t''2) + (1, 0)$) {};
    \node[vtx] (b''3) at ($(b''2) + (1, 0)$) {};

    \draw (v) .. controls ($(v) + (0, 1.5)$) and ($(t''1) + (0, .5)$) .. (t''1);
    \draw  (v) .. controls ($(v) + (0, -1.5)$) and ($(b''1) + (0, -.5)$) .. (b''1);
    \draw (t''1) -- (b''1);

    \draw (t''2) -- (b''2);
    \draw (t''2) -- (t''3);
    \draw (b''2) -- (b''3);
    \draw (t''3) [ultra thick, OwlCyan] -- (b''3);

    \draw[link] (t''1) -- (b''2);
    \draw[link] (t''2) -- (b''3);

    \node (ttm) at ($(t'3)!.5!(t''1)$) {};
    \node (tbm) at ($(b'3)!.5!(b''1)$) {};
    \node at ($(ttm)!.5!(tbm)$) {\Huge $\cdots$};

    \draw[link] (t'3) -- ($(t'3) + (.5, -.5)$);
    \draw[link] (b''1) -- ($(b''1) + (-.5, .5)$);


    \node[label=left:{\Large $v_{7m}$}, vtx] (bv) at (0, -4) {};

    \node[vtx] (bt1) at ($(bv) + (30:1)$) {};
    \node[vtx] (bb1) at ($(bv) + (-30:1)$) {};

    \node[vtx] (bt2) at ($(bt1) + (1, 0)$) {};
    \node[vtx] (bb2) at ($(bb1) + (1, 0)$) {};

    \node[vtx] (bt3) at ($(bt2) + (1, 0)$) {};
    \node[vtx] (bb3) at ($(bb2) + (1, 0)$) {};

    \draw (bv) -- (bt1);
    \draw (bt1) -- (bb1);
    \draw (bb1)-- (bv);

    \draw (bt2) -- (bb2);
    \draw (bt2) -- (bt3);
    \draw (bb2) -- (bb3);
    \draw (bt3) [ultra thick, OwlGreen] -- (bb3);

    \draw[link] (bt1) -- (bb2);
    \draw[link] (bt2) -- (bb3);

    \node[vtx] (bt'1) at ($(bt3) + (1, 0)$) {};
    \node[vtx] (bb'1) at ($(bb3) + (1, 0)$) {};

    \node[vtx] (bt'2) at ($(bt'1) + (1, 0)$) {};
    \node[vtx] (bb'2) at ($(bb'1) + (1, 0)$) {};

    \node[vtx] (bt'3) at ($(bt'2) + (1, 0)$) {};
    \node[vtx] (bb'3) at ($(bb'2) + (1, 0)$) {};

    \draw (bv) .. controls ($(bv) + (0, 1)$) and ($(bt'1) + (0, .5)$) .. (bt'1);
    \draw (bv) .. controls ($(bv) + (0, -1)$) and ($(bb'1) + (0, -.5)$) .. (bb'1);
    \draw (bt'1) -- (bb'1);

    \draw (bt'2) -- (bb'2);
    \draw (bt'2) -- (bt'3);
    \draw (bb'2) -- (bb'3);
    \draw (bt'3)[ultra thick, OwlOrange] -- (bb'3);

    \draw[link] (bt'1) -- (bb'2);
    \draw[link] (bt'2) -- (bb'3);

    \draw[link] (bt3) -- (bb'1);

    \node[vtx] (bt''1) at ($(bt'3) + (2.5, 0)$) {};
    \node[vtx] (bb''1) at ($(bb'3) + (2.5, 0)$) {};

    \node[vtx] (bt''2) at ($(bt''1) + (1, 0)$) {};
    \node[vtx] (bb''2) at ($(bb''1) + (1, 0)$) {};

    \node[vtx] (bt''3) at ($(bt''2) + (1, 0)$) {};
    \node[vtx] (bb''3) at ($(bb''2) + (1, 0)$) {};

    \draw (bv) .. controls ($(bv) + (0, 1.5)$) and ($(bt''1) + (0, .5)$) .. (bt''1);
    \draw  (bv) .. controls ($(bv) + (0, -1.5)$) and ($(bb''1) + (0, -.5)$) .. (bb''1);
    \draw (bt''1) -- (bb''1);

    \draw (bt''2) -- (bb''2);
    \draw (bt''2) -- (bt''3);
    \draw (bb''2) -- (bb''3);
    \draw (bt''3) [ultra thick, OwlViolet]-- (bb''3);

    \draw[link] (bt''1) -- (bb''2);
    \draw[link] (bt''2) -- (bb''3);

    \node (bttm) at ($(bt'3)!.5!(bt''1)$) {};
    \node (btbm) at ($(bb'3)!.5!(bb''1)$) {};
    \node at ($(bttm)!.5!(btbm)$) {\Huge $\cdots$};

    \draw[link] (bt'3) -- ($(bt'3) + (.5, -.5)$);
    \draw[link] (bb''1) -- ($(bb''1) + (-.5, .5)$);

    \node (2b''3) at ($(b''3)+(0, -2)$) {};
    \draw[decorate, decoration=zigzag] (b1) .. controls ($(b1) + (0, -1)$) .. ($(b1)!.5!(2b''3)$);
    \node (2bt1) at ($(bt1)+(0, 2)$) {};
    \draw[decorate, decoration=zigzag] (bt''3) .. controls ($(bt''3) + (0, 1)$) .. ($(bt''3)!.5!(2bt1)$);

    \node[label=left:{\Huge $\vdots$}] at ($(v)!.5!(bv)$) {};


    \node[vtx] (m1) at ($(t''3) + (1, 0)$) {};
    \node[vtx] (m'1) at ($(m1) + (.5, 0)$) {};
    \draw (m1) -- (m'1);
    \node[vtx] (2m1) at ($(m'1) + (.5, 0)$) {};
    \node[vtx] (2m'1) at ($(2m1) + (.5, 0)$) {};
    \draw (2m1) -- (2m'1);
    \node[vtx] (m2) at ($(b''3) + (1, 0)$) {};
    \node[vtx] (m'2) at ($(m2) + (.5, 0)$) {};
    \draw (m2) -- (m'2);
    \node[vtx] (2m2) at ($(m'2) + (.5, 0)$) {};
    \node[vtx] (2m'2) at ($(2m2) + (.5, 0)$) {};
    \draw (2m2) -- (2m'2);
    \node[vtx] (m3) at ($(bt''3) + (1, 0)$) {};
    \node[vtx] (m'3) at ($(m3) + (.5, 0)$) {};
    \draw (m3) -- (m'3);
    \node[vtx] (2m3) at ($(m'3) + (.5, 0)$) {};
    \node[vtx] (2m'3) at ($(2m3) + (.5, 0)$) {};
    \draw (2m3) -- (2m'3);
    \node[vtx] (m4) at ($(bb''3) + (1, 0)$) {};
    \node[vtx] (m'4) at ($(m4) + (.5, 0)$) {};
    \draw (m4) -- (m'4);
    \node[vtx] (2m4) at ($(m'4) + (.5, 0)$) {};
    \node[vtx] (2m'4) at ($(2m4) + (.5, 0)$) {};
    \draw (2m4) -- (2m'4);

    \draw[link] (t''3) -- (m1);
    \draw[link] (m'1) -- (m2);
    \draw[link] (m'2) -- ($(m'2) + (-.25, -.5)$);
    \draw[link] (m3) -- ($(m3) + (.25, .5)$);
    \draw[link] (m'3) -- (m4);
    \draw[decorate, decoration=zigzag] (m'4) .. controls ($(m'4) + (0, .5)$) and ($(2m4) + (0, .5)$) .. (2m4);
    \draw[link] (2m'4) -- (2m3);
    \draw[link] (2m'3) -- ($(2m'3) + (-.25, .5)$);
    \draw[link] (2m2) -- ($(2m2) + (.25, -.5)$);
    \draw[link] (2m'2) -- (2m1);

    \node at ($(m'2)!.5!(m'3) + (.25, 0)$) {\Huge $\vdots$};

    \node (tl-corner) at ($(t1) + (-2, 1)$) {};
    \node (top-edge1) at ($(t1) + (-.5, 1)$) {};
    \node (top-edge2) at ($(t'1) + (-.5, 1)$) {};
    \node (top-edge3) at ($(t'3) + (.5, 1)$) {};
    \node (top-edge4) at ($(t''1) + (-.5, 1)$) {};
    \node (top-edge5) at ($(t''3) + (.5, 1)$) {};
    \node (top-edge6) at ($(2m'1) + (.5, 1)$) {};
    \node (bottom-edge1) at ($(bb1) + (-.5, -1)$) {};
    \node (bottom-edge2) at ($(bb'1) + (-.5, -1)$) {};
    \node (bottom-edge3) at ($(bb'3) + (.5, -1)$) {};
    \node (bottom-edge4) at ($(bb''1) + (-.5, -1)$) {};
    \node (bottom-edge5) at ($(bb''3) + (.5, -1)$) {};
    \node (bottom-edge6) at ($(2m'4) + (.5, -1)$) {}; 

    \draw (tl-corner) rectangle ($(2m'4) + (.5, -1)$);
    
    \draw (top-edge1.center) -- (bottom-edge1.center);
    \draw (top-edge2.center) -- (bottom-edge2.center);
    \draw (top-edge3.center) -- (bottom-edge3.center);
    \draw (top-edge4.center) -- (bottom-edge4.center);
    \draw (top-edge5.center) -- (bottom-edge5.center);
    
    \node[label=below:1, inner sep = 0pt] at ($(top-edge1)!.5!(top-edge2)$) {};
    \node[label=below:2, inner sep = 0pt] at ($(top-edge2)!.5!(top-edge3)$) {};
    \node[label=below:$\cdots$, inner sep = 0pt] at ($(top-edge3)!.5!(top-edge4)$) {};
    \node[label=below:256, inner sep = 0pt] at ($(top-edge4)!.5!(top-edge5)$) {};
    \node[label=below:$T$, inner sep = 0pt] at ($(top-edge5)!.5!(top-edge6)$) {};


  \end{tikzpicture}
    \caption{
    An absorber $(\mathcal A, \mathcal P, T, H)$, where $\mathcal P$ links $\bigcup_{i=1}^3 M_i$ and $T = M\cup \bigcup_{P\in\mathcal T} P$, where $\mathcal T$ links $M_4$.  Links are drawn as zigzags.  
  }
  \label{fig:building-absorber}
\end{figure}
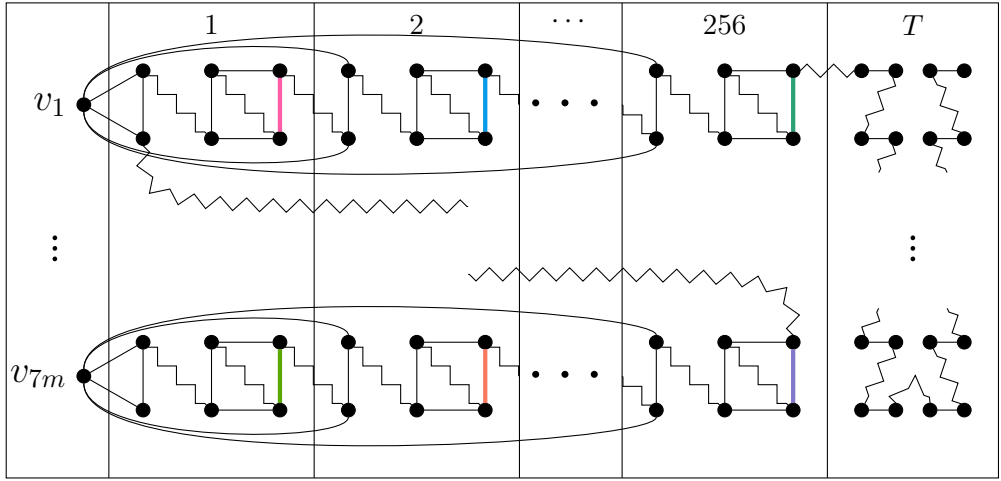

See Figure~\ref{fig:building-absorber}.
\begin{fact}\label{factref}
Suppose that~$\cA$ satisfies~$H$.
If~$\cP\cup\cT$ links $M_{1}\cup\dots\cup M_{4}$, where~$\cP$ links $M_{1}\cup M_{2}\cup M_{3}$ and~$\cT$ links~$M_{4}$, then~$\cP$ completes~$\cA$ and thus $(\cA,\cP)$ is an $H$-absorber.
Moreover, $T\coloneqq M\cup\bigcup_{P\in\cT}P$ is a tail of~$(\cA,\cP)$.
Thus~$(\cA,\cP,T,H)$ is a $36\gamma$-absorber.
\end{fact}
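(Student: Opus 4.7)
The plan is essentially a verification: the four matchings (M1)--(M4) are engineered so that linking them with $\mathcal P \cup \mathcal T$ produces exactly the structure demanded by the definitions of ``completes'', ``tail'', and ``absorber''. Since $H$ is already guaranteed to be a $36\gamma$-absorbing template by the earlier Lemma~\ref{absorbing-template-lemma}, it suffices to check that $\mathcal P$ completes $\mathcal A$ and that $T$ is a tail of $(\mathcal A, \mathcal P)$; the $36\gamma$-absorber conclusion then follows immediately from the corresponding definition.

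For $\mathcal P$ completing $\mathcal A$, I would verify each bullet in the definition of ``completes'' in turn. That $\bigcup_{P \in \mathcal P} P$ is rainbow follows from the stronger rainbow property of $\bigcup_{P \in \mathcal P \cup \mathcal T} P$, which is built into the definition of linking a matching. That no colour in $C'$ or used by a gadget appears in any $P \in \mathcal P$, and analogously for vertices, follows from each $P \in \mathcal P$ being a link (so its internal vertices lie in $V_{\mathrm{link}} \cup V'_{\mathrm{link}}$ and its colours in $C_{\mathrm{link}} \cup C'_{\mathrm{link}}$), together with the hypothesis that $\mathcal A$ satisfies $H$ (so vertices and colours used by distinct gadgets are disjoint, and gadget vertices/colours lie in $V_{\mathrm{abs}}\cup V'$ and $C_{\mathrm{abs}}\cup C'$, disjoint from the link sets). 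For each $(v_i, c_i)$-absorbing gadget $A_i$, the link corresponding to $r_i s_i \in M_1$ serves as the path $P_1$ in Definition~\ref{def:absorbing-gadget} (its endpoints are the non-adjacent $4$-cycle vertices $r_i, s_i$), and the link corresponding to $w_i x_i \in M_2$ serves as $P_2$ (the endpoint $w_i$ is a non-$v_i$ triangle vertex and $x_i$ is a $4$-cycle vertex which, by the choice of $M_2$, is not incident to the $c_i$-edge). Finally, each non-$v_i$ vertex of $A_i$ has degree $2$ in the gadget; for $i \in \{2, \dots, k-1\}$ every such vertex is the endpoint of exactly one link in $\mathcal P$ (via $M_1, M_2,$ or $M_3$), giving total degree $3$, while $z_1$ and $y_k$ are the two ``dangling'' vertices of degree $2$. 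The $M_3$-links chain the $A_i$'s into a single connected structure, and minimality of $\mathcal P$ is immediate since removing any link would either disconnect the union or destroy a gadget-completion.

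For $T$ being a tail of $(\mathcal A, \mathcal P)$, take $x$ to be the vertex $v \in \{z_1, y_k\}$ chosen in (M4); it lies in a gadget but is not $v_i$ for any $i$, as required. The conditions $V(T) \cap V(A) \subseteq \{x\}$ for $A \in \mathcal A$ and $V(T) \cap V(P) = \varnothing$ for $P \in \mathcal P$ follow from $M$ being contained in $(V'_{\mathrm{abs}}, C'_{\mathrm{abs}}, G')$ (so its vertices lie in $V_{\mathrm{abs}}$ but are not used by any gadget in $\mathcal A$) and from the links in $\mathcal T$ having internal vertices in $V_{\mathrm{link}} \cup V'_{\mathrm{link}}$, which is disjoint from $V_{\mathrm{abs}}$ and from the internal vertices of the links in $\mathcal P$; the colour-disjointness is symmetric. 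The rainbow property of $T$ is again part of the overall rainbow union provided by the linking of $M_1 \cup \cdots \cup M_4$. Combining all three verifications with the $36\gamma$-absorbing-template status of $H$ yields that $(\mathcal A, \mathcal P, T, H)$ is a $36\gamma$-absorber. There is no genuine obstacle in the argument — it is pure bookkeeping — with the only delicate point being the requirement in (M2) that $x_i$ be the $4$-cycle vertex not incident to the $c_i$-edge, which is precisely what makes the $M_2$-link play the role of $P_2$ in the gadget-completion.
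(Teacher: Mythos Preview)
Your verification is correct and matches the paper's approach: the paper states Fact~\ref{factref} without proof, treating it as evident from the definitions and Figure~\ref{fig:building-absorber}, and your bullet-by-bullet check of the definitions of ``completes'', ``tail'', and ``$\delta$-absorber'' is exactly what is implicitly intended. Your flag at the end about (M2) is apt---for the $M_2$-link to serve as the path $P_2$ one indeed needs $x_i$ to be chosen as the $4$-cycle vertex not incident to the $c_i$-edge, a constraint not spelled out in (M2) but available since the $M_i$ are constructed rather than given.
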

We find these structures in the following steps.
For Steps 1 and 2, see Lemma~\ref{greedy-absorber-lemma}, and for Steps 3 and 4, see Lemma~\ref{linking-lemma}.
\begin{enumerate}[1)]
\item First, we find the collection $\mathcal A$ of absorbing gadgets greedily, using the robust gadget-resilience property of $\phi$,
\item then we greedily construct the matching $M$, using the local edge-resilience property of $\phi$.
\item Next, we construct an auxiliary hypergraph in which each hyperedge corresponds to a main link and apply Theorem~\ref{hypergraph-matching-thm} to choose most of the links in $\mathcal P$, and
\item finally we greedily choose the remainder of the links in $\mathcal P$ from the reserve links.
\end{enumerate}

\subsection{The absorbing template}

\begin{lemma}\label{absorbing-template-lemma}
Consider an absorber partition of~$V$,~$C$, and~$K_{n}$.
  With high probability, there exists a $36\gamma$-absorbing template $H\cong \tilde H$, where
  \begin{enumerate}[(\ref{absorbing-template-lemma}.1), topsep = 6pt]
  \item\label{template-flexible-sets} $H$ has flexible sets $(V'_{\mathrm{flex}}, C'_{\mathrm{flex}}, G')$ where~$(V'_{\text{flex}},C'_{\text{flex}})$ is contained in $(V_{\mathrm{flex}}, C_{\mathrm{flex}})$ with $3\eps$-bounded remainder, and
  \item\label{template-buffer-sets} $H$ has buffer sets $V'_{\mathrm{buff}}$ and $C'_{\mathrm{buff}}$ where $(V'_{\mathrm{buff}}, C'_{\mathrm{buff}})$ is contained in $(V_{\mathrm{buff}}, C_{\mathrm{buff}})$ with $6\eps$-bounded remainder.
  \end{enumerate}
\end{lemma}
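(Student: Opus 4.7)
The plan is to combine Chernoff-type concentration of the sizes of the random parts, a deterministic embedding of $\tilde H$, and a McDiarmid-based union bound for the cover count. First I would apply Lemma~\ref{chernoff bounds} to show that with high probability $|V_{\mathrm{flex}}|, |C_{\mathrm{flex}}| = (\eta \pm \eps/2)n$ and $|V_{\mathrm{buff}}|, |C_{\mathrm{buff}}| = (5\eta/2 \pm \eps/2)n$. Since $2m = (\eta - 2\eps)n$ and $5m = (5\eta/2 - 5\eps)n$, I can then pick arbitrary subsets $V'_{\mathrm{flex}} \subseteq V_{\mathrm{flex}}$, $C'_{\mathrm{flex}} \subseteq C_{\mathrm{flex}}$ of size $2m$, and $V'_{\mathrm{buff}} \subseteq V_{\mathrm{buff}}$, $C'_{\mathrm{buff}} \subseteq C_{\mathrm{buff}}$ of size $5m$, whose complements inside the respective random parts have size less than $3\eps n$ and $6\eps n$ respectively. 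I take $H$ to be an isomorphic copy of $\tilde H$ with bipartition $(V'_{\mathrm{flex}} \cup V'_{\mathrm{buff}}, C'_{\mathrm{flex}} \cup C'_{\mathrm{buff}})$, identifying the flexible and buffer sets of $\tilde H$ with the correspondingly-named subsets. This immediately yields a $256$-regular graph robustly matchable with respect to $V'_{\mathrm{flex}}$ and $C'_{\mathrm{flex}}$ that satisfies~\ref{template-flexible-sets} and~\ref{template-buffer-sets}.

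The substantive content is verifying the cover-count condition in Definition~4.3 with $\delta = 36\gamma$. For each $(u, v, c) \in V \times V \times C$, let $N(u, v, c)$ be the number of length-$4$ rainbow paths $u u' w v' v$ in $K_n$ with $\phi(u'w) = c$. A direct enumeration using only that $\phi$ is proper gives $N(u, v, c) = \Omega(n^2)$: for each of the $n - O(1)$ choices of colour $c_1 \in C \setminus \{c, \phi(uv)\}$ the vertex $u'$ and then $w$ are determined, and $v'$ can then be chosen among the $n - O(1)$ neighbours of $v$ avoiding repeated colours and vertices. Each such path is independently a $(V_{\mathrm{flex}}, C_{\mathrm{flex}}, G')$-cover of $(u, v, c)$ with probability $p_{\mathrm{flex}}^3 q_{\mathrm{flex}}^3 \beta^4 = \Theta(\eta^6 \mu^4)$, since the vertex partition, colour partition, and edge-subset $G'$ are mutually independent. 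Hence the expected cover count is $\Omega(\eta^6 \mu^4 n^2)$, which is far larger than $\gamma n^2$.

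For concentration I would apply Theorem~\ref{mcd} with one independent random variable per vertex part-assignment, per colour part-assignment, and per edge-inclusion in $G'$. Each vertex, colour and edge lies in at most $O(n)$ of the paths contributing to $N(u, v, c)$, so every Lipschitz constant is $O(n)$; crucially, the total number of (path, edge) incidences is only $4 N(u, v, c) = O(n^2)$, so $\sum_e c_e^2 \leq (\max_e c_e)\cdot\sum_e c_e = O(n^3)$, and the analogous bound holds for the vertex and colour contributions. McDiarmid's inequality then yields failure probability $\exp(-t^2 / O(n^3))$, and taking $t = n^{7/4}$ produces a super-polynomially small bound that easily absorbs a union bound over the $n^3$ triples. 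This shows that with high probability every triple admits at least $100\gamma n^2$ $(V_{\mathrm{flex}}, C_{\mathrm{flex}}, G')$-covers. Finally, any such cover that fails to be a $(V'_{\mathrm{flex}}, C'_{\mathrm{flex}}, G')$-cover must use a vertex of $V_{\mathrm{flex}} \setminus V'_{\mathrm{flex}}$ or a colour of $C_{\mathrm{flex}} \setminus C'_{\mathrm{flex}}$; since each such vertex or colour lies in $O(n)$ paths and there are fewer than $3\eps n$ of them, at most $O(\eps n^2) \leq 64\gamma n^2$ covers are lost (using $\eps \ll \gamma$), leaving at least $36\gamma n^2$, as required.

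The main obstacle is keeping McDiarmid's inequality exponentially strong enough to survive a $n^3$-sized union bound while still constraining the deviation to $o(n^2)$. This works only because the Lipschitz constants are concentrated on $O(n)$ of the vertices, colours, and edges rather than taking their worst-case value on every one of the $\Theta(n^2)$ edge variables, producing $\sum c_i^2 = O(n^3)$ rather than the naive $O(n^4)$; beyond this accounting, the rest of the argument is a sequence of essentially routine checks.
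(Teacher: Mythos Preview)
Your proposal is correct and follows essentially the same approach as the paper: Chernoff for the part sizes, an arbitrary embedding of $\tilde H$ onto subsets of the right cardinality, McDiarmid's inequality with $\sum c_i^2 = O(n^3)$ for the cover count, and then a subtraction of the $O(\eps n^2)$ covers lost when passing from $V_{\mathrm{flex}},C_{\mathrm{flex}}$ to $V'_{\mathrm{flex}},C'_{\mathrm{flex}}$. Your incidence-counting bound $\sum_e c_e^2 \le (\max_e c_e)\sum_e c_e$ is a slightly slicker way to obtain the $O(n^3)$ denominator than the paper's explicit case split over edge roles, but the two arguments are otherwise the same.
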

\begin{proof}
  For convenience, let $p \coloneqq p_{\mathrm{flex}}$ and $q \coloneqq q_{\mathrm{flex}}$.
  We claim that the following holds with high probability:
  \begin{enumerate}[(a)]
  \item\label{flexible-sets-right-size} $|V_{\mathrm{flex}}|, |C_{\mathrm{flex}}| = (\eta \pm \eps)n$,
  \item\label{buffer-sets-right-size} $|V_{\mathrm{buff}}|, |C_{\mathrm{buff}}| = (5\eta/2 \pm \eps)n$, and
  \item\label{many-covers-survive} for every distinct $u, v\in V$ and $c \in C$, there are at least $p^3q^3\beta^4 n^2 / 4$ $(V_{\mathrm{flex}}, C_{\mathrm{flex}}, G')$-covers of $u$, $v$, and $c$.
  \end{enumerate}
  Indeed,~\ref{flexible-sets-right-size} and~\ref{buffer-sets-right-size} follow from the Chernoff Bound (Lemma~\ref{chernoff bounds}). To prove~\ref{many-covers-survive}, for each $u, v$, and $c$, we apply McDiarmid's Inequality (Theorem~\ref{mcd}). Consider the random variable $f$ counting the number of $(V_{\mathrm{flex}}, C_{\mathrm{flex}}, G')$-covers of~$u$,~$v$, and~$c$. Note that~$f$ is determined by the following independent binomial random variables: $\{X_z\}_{z\in V}$, where $X_z$ indicates if $z\in V_{\mathrm{flex}}$, $\{X_{c'}\}_{c'\in C}$, where $X_{c'}$ indicates if $c'\in C_{\mathrm{flex}}$, and for each edge $e$, the random variable $X_e$ which indicates if $e\in E(G')$.  We claim there are at least $2(n/2 - 2)(n - 7)$ $(V, C, K_{n})$-covers of $u$, $v$, and $c$.  To that end, let $u'w$ be a $c$-edge with $u',w\in V\setminus\{u,v\}$.  There are at least $n - 7$ vertices $v'\in V\setminus \{u, v, u', w\}$ such that $\phi(vv'), \phi(wv')\notin \{\phi(uu'), c\}$, and for each such vertex $v'$ the path~$uu'wv'v$ is a $(V, C, K_{n})$-cover of $u$, $v$, and $c$.
  Similarly, there are at least~$n-7$ $(V,C,K_{n})$-covers of the form~$uwu'v'v$.
  Altogether this gives at least $2(n/2 - 2)(n - 7) \geq n^2/2$ $(V, C, K_{n})$-covers of $u$, $v$, and $c$, as claimed.  Therefore $\Expect{f} \geq p^3q^3\beta^4 n^2/2$.  For each $z\in V$, $X_z$ affects $f$ by at most $3n$, and $X_{uz}$, and $X_{vz}$ each affect $f$ by at most $n$, and for each $c'\in C$, $X_{c'}$ affects $f$ by at most $3n$.  For each edge $e$ not incident to $u$ or $v$, if $e$ is a $c$-edge, then $X_e$ affects $f$ by at most $2n$, and otherwise $e$ affects $f$ by at most two.  Thus, by McDiarmid's Inequality applied with $t = \Expect{f}/2$, there are at least $p^3q^3\beta^4 n^2/4$ $(V_{\mathrm{flex}}, C_{\mathrm{flex}}, G')$-covers of $u$, $v$, and $c$ with probability at least $1 - \exp\left(- p^{6}q^{6}\beta^{8}n^4 / O(n^3)\right)$. Thus by a union bound,~\ref{many-covers-survive} also holds with high probability.

  Now we assume~\ref{flexible-sets-right-size}--\ref{many-covers-survive} holds, and we show there exists a $36\gamma$-absorbing template $H\cong \tilde H$ satisfying~\ref{template-flexible-sets} and~\ref{template-buffer-sets}.

  Since $m= (\eta/2 - \eps)n$, by~\ref{flexible-sets-right-size} and~\ref{buffer-sets-right-size}, there exists $V'_{\mathrm{flex}} \subseteq V_{\mathrm{flex}}$, $C'_{\mathrm{flex}} \subseteq C_{\mathrm{flex}}$, $V'_{\mathrm{buff}} \subseteq V_{\mathrm{buff}}$, and $C'_{\mathrm{buff}} \subseteq C_{\mathrm{buff}}$, such that $|V'_{\mathrm{flex}}|, |C'_{\mathrm{flex}}| = 2m$ and $|V'_{\mathrm{buff}}|, |C'_{\mathrm{buff}}| = 5m$, which we choose arbitrarily, and moreover, $|V_{\mathrm{flex}}\setminus V'_{\mathrm{flex}}|, |C_{\mathrm{flex}}\setminus C'_{\mathrm{flex}}| \leq 3\eps n$ and $|V_{\mathrm{buff}}\setminus V'_{\mathrm{buff}}|, |C_{\mathrm{buff}}\setminus C'_{\mathrm{buff}}| \leq 6\eps n$, as required.  Choose bijections from $V'_{\mathrm{flex}}, C'_{\mathrm{flex}}$, $V'_{\mathrm{buff}}$, and $C'_{\mathrm{buff}}$ to the flexible sets and the buffer sets of $\tilde H$ arbitrarily, and let $H\cong \tilde H$ be the corresponding graph.  Now $H$ satisfies~\ref{template-flexible-sets} and~\ref{template-buffer-sets}, as required, so it remains to show that $H$ is a $36\gamma$-absorbing template.  Since each vertex or colour in $V_{\mathrm{flex}}$ or $C_{\mathrm{flex}}$ is in at most $3n$ $(V_{\mathrm{flex}}, C_{\mathrm{flex}}, G')$-covers of $u$, $v$, and $c$,~\ref{flexible-sets-right-size} and~\ref{many-covers-survive} imply that there are at least $p^3q^3\beta^4 n^2 / 4 - 18\eps n^2 \geq 36\gamma n^2$ $(V'_{\mathrm{flex}}, C'_{\mathrm{flex}}, G')$-covers of $u$, $v$, and $c$, so $H$ is a $36\gamma$-absorbing template, as desired.
\end{proof}

\subsection{Greedily building an $H$-absorber}

\begin{lemma}\label{greedy-absorber-lemma}
Consider an absorber partition of~$V$,~$C$, and~$K_{n}$.
  The following holds with high probability. Suppose $V_{\text{res}}\subseteq V_{\text{flex}}\cup V_{\text{buff}}$ and $C_{\text{res}}\subseteq C_{\text{flex}}\cup C_{\text{buff}}$. For every graph $H\cong \tilde H$ with bipartition $(V_{\mathrm{res}}, C_{\mathrm{res}})$, there exists
  \begin{enumerate}[(\ref{greedy-absorber-lemma}.1), topsep = 6pt]
  \item\label{greedily-choosing-absorber} a collection $\cA=\{A_{vc}\colon vc\in E(H)\}$ such that~$\cA$ satisfies~$H$ and such that for all $A_{vc}\in\cA$ we have that~$A_{vc}$ uses vertices, colours, and edges in $V_{\mathrm{abs}}$, $C_{\mathrm{abs}}$, and $G'$ respectively, and
  \item\label{greedily-choosing-matching} a rainbow matching $M$ contained in $(V'_{\mathrm{abs}}, C'_{\mathrm{abs}}, G')$ with $5\eps$-bounded remainder, where $V'_{\text{abs}}$ and~$C'_{\text{abs}}$ are the sets of vertices and colours in~$V_{\text{abs}}$ and~$C_{\text{abs}}$ not used by any absorbing gadget in~$\cA$.
  \end{enumerate}
\end{lemma}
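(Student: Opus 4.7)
The plan is to separate the argument into a high-probability part that depends only on the random partition and $G'$ (not on $H$), and a deterministic greedy construction given those events. The high-probability events I want to establish are: (a) by Chernoff, $|V_{\mathrm{abs}}|$ and $|C_{\mathrm{abs}}|$ are within $\eps n$ of their expectations; (b) for every $(x,c)\in V\times C$, the sub-collection $\mathcal A_{x,c}$ of the $\mu$-robustly gadget-resilient family whose non-$x$ vertices lie in $V_{\mathrm{abs}}$, non-$c$ colours lie in $C_{\mathrm{abs}}$, and edges lie in $G'$ has size at least $c_0\mu^{27}n^2$ for some absolute constant $c_0>0$; (c) for every $V''\subseteq V$, $C''\subseteq C$ with $|V''|,|C''|\geq\eps n$, the $\eps$-local edge-resilience gives $\eps^3 n^2/100$ such edges in $K_n$, and Chernoff plus a union bound over $2^{2n}$ pairs leaves at least $\beta\eps^3 n^2/200$ such edges in $G'$ (the $\exp(-\Omega(n^2))$ tail easily beats $2^{2n}$).

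The main obstacle is (b). A direct application of McDiarmid to the edge variables is insufficient: each $X_e$ can change $N_{x,c}$ by up to $5\mu n/4$ and there are $\binom{n}{2}$ of them. I would therefore apply McDiarmid in three stages, revealing $V_{\mathrm{abs}}$, then $C_{\mathrm{abs}}$, then $G'$. At each stage I use the sharper bound $\sum_i c_i^2\leq\max_i c_i\cdot\sum_i c_i$, where $\sum_i c_i$ counts (vertex/colour/edge)-gadget incidences and is bounded by $6|\mathcal A^*_{x,c}|$, $3 N^V_{x,c}$, and $7 N^{V,C}_{x,c}$ respectively. Combined with the trivial lower bounds $p_{\mathrm{abs}}\geq 2\mu$, $q_{\mathrm{abs}}\geq\mu$, $\beta\geq 5\mu$ and the lower bound $|\mathcal A^*_{x,c}|\geq\mu^4 n^2/2^{23}$, each stage yields a concentration exponent of order $\mu^{O(1)}n$. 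Thus each stage fails with probability $\exp(-\Omega(\mu^{26}n))$, and a union bound over all $n^2$ pairs $(x,c)$ gives $o(1)$ total failure.

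Assuming (a)--(c), I would build $\mathcal A$ greedily. Order the edges of $H$ as $e_1,\dots,e_{|E(H)|}$ and for each $e_k=v_kc_k$, pick any gadget from $\mathcal A_{v_k,c_k}$ that is disjoint (in used vertices, colours, and edges) from the previously chosen gadgets. The used elements at step $k$ number at most $6k+3k+7k=16k$, and by the well-spread property each kills at most $5\mu n/4$ candidates, so the loss is at most $20\mu k n\leq 20\mu|E(H)|n\leq O(\mu\eta n^2)$. Since $\eta\ll\mu$ forces $\eta\ll\mu^{26}$, this is strictly less than $c_0\mu^{27}n^2\leq|\mathcal A_{v_k,c_k}|$, so the greedy step succeeds. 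Vertices in $V_{\mathrm{res}}$ and colours in $C_{\mathrm{res}}$ are automatically avoided because $\mathcal A_{v_k,c_k}\subseteq G'$ has its used vertices and colours in the disjoint parts $V_{\mathrm{abs}}$ and $C_{\mathrm{abs}}$.

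Finally, having fixed $\mathcal A$, the sizes $|V'_{\mathrm{abs}}|=(2\mu\pm\eps)n$ and $|C'_{\mathrm{abs}}|=(\mu\pm\eps)n$ follow from (a). I build $M$ greedily: as long as $|V'_{\mathrm{abs}}\setminus V(M)|\geq\eps n$ and $|C'_{\mathrm{abs}}\setminus\phi(M)|\geq\eps n$, event (c) guarantees at least $\beta\eps^3 n^2/200$ rainbow-candidate edges in $G'$, which easily beats the $7|E(H)|+|M|=O(\eta n)$ edges already used for large~$n$; pick one and continue. When the process stops because one of the two sets first drops below $\eps n$, the ratio $|V'_{\mathrm{abs}}|:|C'_{\mathrm{abs}}|\approx 2:1$ forces the other set to also be within $5\eps n$ of being exhausted, establishing the $5\eps$-bounded remainder. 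The only non-routine obstacle in the whole argument is the three-stage McDiarmid calculation for (b); everything else is either Chernoff or a greedy counting argument enabled by the well-spread property.
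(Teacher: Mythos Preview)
Your overall strategy matches the paper's exactly: isolate three high-probability events (size concentration, survival of many well-spread gadgets for each $(x,c)$, and edge-resilience surviving into $G'$), then run two greedy procedures. The greedy steps and the matching-remainder calculation are carried out just as in the paper.

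The one place you diverge is the concentration argument for (b), and here you have made things harder than necessary. Your claim that ``a direct application of McDiarmid to the edge variables is insufficient'' is not correct: the very trick you invoke stage-by-stage, namely $\sum_i c_i^2 \le (\max_i c_i)\cdot\sum_i c_i$, already rescues the single-stage application. The paper applies McDiarmid once to all variables $\{X_u\}_{u\in V}\cup\{X_d\}_{d\in C}\cup\{X_e\}_{e\in E}$ simultaneously, using the naive bound $n\cdot(5\mu n/4)^2$ for the vertex and colour sums (there are only $n$ of each, so this is $O(\mu^2 n^3)$), and using your trick for the edges: $\sum_e m_e^2 \le (5\mu n/4)\cdot\sum_e m_e = (5\mu n/4)\cdot 7|\cA_{v,c}| = O(\mu^5 n^3)$. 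The total denominator is $O(\mu^2 n^3)$, which against $t^2 = \Theta(\mu^{40} n^4)$ gives an exponent of order $\mu^{38} n$---more than enough for the union bound over $n^2$ pairs. Your three-stage approach is valid but adds conditioning bookkeeping (you must carry the high-probability event from each stage into the next and check that the surviving sub-collection remains well-spread) for no gain.
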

\begin{proof}
  For convenience, let $p \coloneqq p_{\mathrm{abs}}$ and $q \coloneqq q_{\mathrm{abs}}$ 
  in this proof.

Since~$\phi$ is $\mu$-robustly gadget-resilient, for every $v\in V$, $c\in C$, there is a collection~$\cA_{v,c}$ of precisely\COMMENT{Helps to simplify the calculation where we bound $\prob{\cE_{v,c}}$, as~$|\cA_{v,c}|$ appears on both top and bottom.}~$2^{-23}\mu^{4}n^{2}$ $(v,c)$-absorbing gadgets such that every vertex, every colour, and every edge is used by at most~$5\mu n/4$ of the $A\in\cA_{v,c}$. (Recall from Definition~\ref{def:absorbing-gadget} that a $(v,c)$-absorbing gadget does not use~$v$ and~$c$.)
Fix $v\in V$, $c\in C$.
The expected number of the $(v,c)$-absorbing gadgets in~$\cA_{v,c}$ using only vertices in~$V_{\text{abs}}$, colours in~$C_{\text{abs}}$, and edges in~$G'$ is~$p^{6}q^{3}\beta^{7}|\cA_{v,c}|$.
Let~$\cE_{v,c}$ be the event that fewer than~$p^{6}q^{3}\beta^{7}|\cA_{v,c}|/2$ of the $(v,c)$-absorbing gadgets in~$\cA_{v,c}$ use only vertices in~$V_{\text{abs}}$, colours in~$C_{\text{abs}}$ and edges in~$G'$.
We claim that $\prob{\cE_{v,c}}\leq\exp(-2^{-51}p^{12}q^{6}\beta^{14}\mu^{6}n)$.

To see this, for each $u\in V$, $d\in C$, $e\in E$, let~$m_{u}$,~$m_{d}$, and~$m_{e}$ denote the number of $(v,c)$-absorbing gadgets in~$\cA_{v,c}$ using~$u$,~$d$, and~$e$, respectively.
We will apply McDiarmid's Inequality (Theorem~\ref{mcd}) to the function~$f_{v,c}$ which counts the number of $A\in\cA_{v,c}$ using only vertices in~$V_{\text{abs}}$, colours in~$C_{\text{abs}}$, and edges in~$G'$.
We use independent indicator random variables $\{X_{u}\}_{u\in V}\cup\{X_{d}\}_{d\in C}\cup\{X_{e}\}_{e\in E}$ which indicate whether or not a vertex~$u$ is in~$V_{\text{abs}}$, a colour~$d$ is in~$C_{\text{abs}}$, and an edge~$e$ is in~$G'$.
Each random variable~$X_{u}$,~$X_{d}$,~$X_{e}$ affects~$f_{v,c}$ by at most~$m_{u}$,~$m_{d}$,~$m_{e}$, respectively.
Since $m_{u}\leq 5\mu n/4$ for all $u\in V$ and $m_{d}\leq 5\mu n/4$ for all $d\in C$, we have $\sum_{u\in V}m_{u}^{2}$, $\sum_{d\in C}m_{d}^{2}\leq 25\mu^{2}n^{3}/16$.
Since $\sum_{e\in E}m_{e}=7|\cA_{v,c}|$ and $m_{e}\leq 5\mu n/4$ for all $e\in E$, it follows that $\sum_{e\in E}m_{e}^{2}\leq 35\mu n|\cA_{v,c}|/4$.
Therefore, by McDiarmid's Inequality, we have
\[
\prob{\cE_{v,c}}\leq\exp\left(-\frac{p^{12}q^{6}\beta^{14}|\cA_{v,c}|^{2}/4}{25\mu^{2}n^{3}/8 + 35\mu n|\cA_{v,c}|/4}\right)\leq\exp(-2^{-51}p^{12}q^{6}\beta^{14}\mu^{6}n),
\]
as claimed.
Thus, by a union bound, the probability that there exist $v\in V$, $c\in C$ such that~$\cE_{v,c}$ holds is at most $\exp(-2^{-52}p^{12}q^{6}\beta^{14}\mu^{6}n)$.

  We claim the following holds with high probability:
  \begin{enumerate}[(a)]
  \item\label{absorbing-sets-right-size} $|V_{\mathrm{abs}}| = (p \pm \eps)n$ and $|C_{\mathrm{abs}}| = (q \pm \eps)n$;
  \item\label{many-gadgets-survive} for every $v\in V$, $c\in C$, the event~$\cE_{v,c}$ does not hold;
  \item\label{weak-pseudorandomness-in-slice} for every $V^\circ\subseteq V_{\mathrm{abs}}$ and $C^\circ \subseteq C_{\mathrm{abs}}$ such that $|V^\circ|, |C^\circ| \geq \eps n$, there are at least $\beta \eps^3 n^2/ 200$ edges in $G'$ with both ends in $V^\circ$ and a colour in $C^\circ$.
  \end{enumerate}
  Indeed,~\ref{absorbing-sets-right-size} holds by the Chernoff Bound (Lemma~\ref{chernoff bounds}), we have already shown~\ref{many-gadgets-survive}, and since $\phi$ is $\eps$-locally edge-resilient,~\ref{weak-pseudorandomness-in-slice} holds by applying the Chernoff Bound for each $V^\circ$ and $C^\circ$ and using a union bound.

  Now we assume that~\ref{absorbing-sets-right-size}--\ref{weak-pseudorandomness-in-slice} hold\COMMENT{i.e. fix an outcome of all the random slicing in which~\ref{absorbing-sets-right-size}-\ref{weak-pseudorandomness-in-slice} hold. If there are no $H\cong \tilde H$ with bipartition $(V_{\mathrm{res}}, C_{\mathrm{res}})$ contained in $(V_{\mathrm{flex}}\cup V_{\mathrm{buff}}, C_{\mathrm{flex}} \cup C_{\mathrm{buff}})$, then the desired conclusion holds vacuously for such an outcome. Otherwise, fix any $H\cong \tilde H$ with bipartition $(V_{\mathrm{res}}, C_{\mathrm{res}})$ contained in $(V_{\mathrm{flex}}\cup V_{\mathrm{buff}}, C_{\mathrm{flex}} \cup C_{\mathrm{buff}})$, and we use~\ref{absorbing-sets-right-size}-\ref{weak-pseudorandomness-in-slice} to find the necessary extra structure for~$H$.}, we suppose $H\cong \tilde H$ has bipartition $(V_{\mathrm{res}}, C_{\mathrm{res}})$ contained in $(V_{\mathrm{flex}}\cup V_{\mathrm{buff}}, C_{\mathrm{flex}}\cup C_{\mathrm{buff}})$, and we show that~\ref{greedily-choosing-absorber} and~\ref{greedily-choosing-matching} hold. Arbitrarily order the edges of $H$ as $e_1, \dots, e_{|E(H)|}$.
  Let $i\in[|E(H)|]$ and suppose that for each $j<i$ we have found a $(v_{j},c_{j})$-absorbing gadget~$A_{j}$, where $e_{j}=v_{j}c_{j}$, and further, the collection~$\{A_{1},\dots,A_{i-1}\}$ satisfies the spanning subgraph of~$H$ containing precisely the edges~$e_{1},\dots,e_{i-1}$.  
  Writing $e_{i}=v_{i}c_{i}$, by~\ref{many-gadgets-survive} there is a collection~$\cA_{v_{i},c_{i}}^{\text{abs}}$ of at least $2^{-24}p^{6}q^{3}\beta^{7}\mu^{4}n^{2}$ $(v_{i},c_{i})$-absorbing gadgets each using only $V_{\text{abs}}$-vertices,~$C_{\text{abs}}$-colours, and~$G'$-edges, and moreover, each vertex in~$V_{\text{abs}}$, colour in~$C_{\text{abs}}$, and edge in~$G'$ is used by at most~$5\mu n/4$ of the $A\in\cA_{v_{i},c_{i}}^{\text{abs}}$.
  Thus, at most\COMMENT{Each of the $A_{j}\in\{A_{1},\dots,A_{i-1}\}$ chosen thus far has $6$ $V_{\text{abs}}$-vertices, $3$ $C_{\text{abs}}$-colours, and $7$ $G'$-edges. Thus for each $A_{j}$ we must remove at most $(6+3+7)\cdot 5\mu n/4=20\mu n$ of the gadgets in $\cA_{v_{i},c_{i}}^{\text{abs}}$ from consideration.} $20\mu n\cdot i\leq 20\mu n|E(H)|\leq 17920\eta\mu n^{2}$ of the $(v_{i},c_{i})$-absorbing gadgets in~$\cA_{v_{i},c_{i}}^{\text{abs}}$ use a vertex, colour, or edge used by any of the~$A_{j}$ for $j<i$.
  Since $|\cA_{v_{i},c_{i}}^{\text{abs}}|\geq 2^{-24}p^{6}q^{3}\beta^{7}\mu^{4}n^{2}$, we conclude\COMMENT{Recall $\eta\ll\mu$ and that $p,q,\beta=\Theta(\mu)$.} that there is at least one $(v_{i},c_{i})$-absorbing gadget $A\in\cA_{v_{i},c_{i}}^{\text{abs}}$ using vertices, colours, and edges which are disjoint from the vertices, colours, and edges used by~$A_{j}$, for all $j<i$.
  We arbitrarily choose such an~$A$ to be~$A_{i}$.
  Continuing in this way, it is clear that $\cA\coloneqq\{A_{i}\}_{i=1}^{|E(H)|}$ satisfies~$H$, so~\ref{greedily-choosing-absorber} holds.


  Now we prove~\ref{greedily-choosing-matching}.  
Let $V'_{\mathrm{abs}}$ and $C'_{\mathrm{abs}}$ be the vertices, colours, and edges in $V_{\mathrm{abs}}$ and $C_{\mathrm{abs}}$ not used by any $(v, c)$-absorbing gadget in $\mathcal A$.  By~\ref{absorbing-sets-right-size} and~(\ref{slicearray}), we have $|V'_{\mathrm{abs}}| = (2\mu \pm \eps)n$ and $|C'_{\mathrm{abs}}| = (\mu \pm \eps)n$.  Thus, by~\ref{weak-pseudorandomness-in-slice}, we can greedily choose a rainbow matching $M$ in $(V'_{\mathrm{abs}}, C'_{\mathrm{abs}}, G')$ of size at least $(\mu - 2\eps )n$, and $M$ satisfies~\ref{greedily-choosing-matching}.
\end{proof}

\subsection{Linking}

Lastly, we need the following lemma, inspired by~\cite[Lemma 20]{GKMO20}, which we use to both complete the set of absorbing gadgets obtained by Lemma~\ref{greedy-absorber-lemma} to an $H$-absorber and also construct its tail.
Recall that links were defined at the beginning of Section~\ref{overviewsec}.
\begin{lemma}\label{linking-lemma}
Consider an absorber partition of~$V$,~$C$, and~$K_{n}$.
  The following holds with high probability.  For every matching $M$ such that $V(M)\subseteq V_{\mathrm{abs}}$ and $|V_{\mathrm{abs}}\setminus V(M)| \leq \eps n$,
  there exists a collection $\mathcal P$ of links in~$G'$ such that
  \begin{enumerate}[(\ref{linking-lemma}.1), topsep = 6pt]
  \item\label{linking-matching} $\mathcal P$ links $M$ and
  \item\label{links-bounded-remainder} $\bigcup_{P\in\cP}P\setminus V(M)$ is contained in $(V_{\mathrm{link}} \cup V'_{\mathrm{link}}, C_{\mathrm{link}} \cup C'_{\mathrm{link}}, G')$ with $\gamma/2$-bounded remainder.
  \end{enumerate}
\end{lemma}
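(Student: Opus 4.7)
The plan is to apply Theorem~\ref{hypergraph-matching-thm} to an auxiliary hypergraph encoding potential main links, and then complete the remaining linking greedily with reserve links. I would first establish via Chernoff (Lemma~\ref{chernoff bounds}), McDiarmid (Theorem~\ref{mcd}) and union bounds that with high probability over the random absorber partition the following hold: $|V_{\mathrm{link}}|=(1\pm\eps)p_{\mathrm{link}}n$ and the analogues for $C_{\mathrm{link}}, V'_{\mathrm{link}}, C'_{\mathrm{link}}$; and for each triple of distinct vertices $(u,v,w)$, the number $L_{u,v,w}$ of length-$4$ rainbow paths from $u$ to $v$ through $w$ with the non-$w$ internal vertices in $V_{\mathrm{link}}$, colours in $C_{\mathrm{link}}$ and edges in $G'$ satisfies $L_{u,v,w}=(1\pm\eps)p_{\mathrm{link}}^2 q_{\mathrm{link}}^4\beta^4 N_{u,v,w}$, where $N_{u,v,w}$ is the deterministic count in $K_n$. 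Standard path-counting in $\phi$ shows that $N_{u,v,w}$ equals $(1\pm o(1))$ times a universal value, and similarly for $N_{u,v}\coloneqq\sum_w N_{u,v,w}/3$. Analogous concentration for length-$4$ rainbow paths in $V'_{\mathrm{link}}, C'_{\mathrm{link}}, G'$ should be proved in the same way.

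Now fix an outcome of the partition for which the above bounds hold, fix a valid matching $M$, and build the $8$-uniform hypergraph $\mathcal H_M$ with vertex set $E(M)\cup V_{\mathrm{link}}\cup C_{\mathrm{link}}$ in which each hyperedge is $\{uv,x_1,x_2,x_3,\phi(ux_1),\phi(x_1x_2),\phi(x_2x_3),\phi(x_3v)\}$ corresponding to a main link $ux_1x_2x_3v$ with $uv\in E(M)$. The slot degrees in $\mathcal H_M$ are $(1\pm O(\eps))p_{\mathrm{link}}^3 q_{\mathrm{link}}^4 \beta^4 N_{u,v}$, and the internal-vertex and colour degrees take the same value up to $(1\pm o(1))$ thanks to the identities $|V_{\mathrm{link}}|=3|E(M)|$ and $|C_{\mathrm{link}}|=4|E(M)|$ (which hold up to additive $O(\eps n)$ error from the choices of $p_{\mathrm{link}}$, $q_{\mathrm{link}}$, $p_{\mathrm{abs}}$). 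Moreover $\Delta^c(\mathcal H_M)=O(n^2)\ll D/\log^{72}n$, where $D=\Theta(n^3)$ is this common degree. Hence Theorem~\ref{hypergraph-matching-thm} applied with $r=8$, $\mathcal F=\{E(M),V_{\mathrm{link}},C_{\mathrm{link}}\}$ and parameter $\gamma^*\coloneqq\gamma^{10}$ returns a $(\gamma^*,\mathcal F)$-perfect matching in $\mathcal H_M$ corresponding to a collection $\mathcal P_1$ of vertex- and colour-disjoint main links covering all but at most $\gamma^*|E(M)|$ slots of $M$ and using all but at most $\gamma^*|V_{\mathrm{link}}|$ vertices of $V_{\mathrm{link}}$ and $\gamma^*|C_{\mathrm{link}}|$ colours of $C_{\mathrm{link}}$.

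Finally, I would link the residual slots greedily using reserve links. For each of the at most $\gamma^*|E(M)|$ uncovered slots $uv$, the reserve-link concentration gives $\Theta(\gamma^7 n^3)$ candidate reserve links, while only $O(\gamma^*|E(M)|\cdot n^2)$ of them are ruled out by previously chosen reserve links (each used internal vertex or colour blocks $O(n^2)$ candidates); since $\gamma^*\ll\gamma^7$ this greedy step succeeds and produces a collection $\mathcal P_2$ of reserve links. Setting $\mathcal P\coloneqq\mathcal P_1\cup\mathcal P_2$ then links $M$, establishing~\ref{linking-matching}; and the count $|V_{\mathrm{link}}\cup V'_{\mathrm{link}}|-3|E(M)|=(\gamma/3\pm O(\eps))n\leq\gamma n/2$, together with the analogous calculation for colours, establishes~\ref{links-bounded-remainder}. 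The main obstacle is obtaining the degree concentration for $\mathcal H_M$ uniformly over every valid $M$; I handle this by controlling the triple counts $L_{u,v,w}$ individually for each $(u,v,w)$ via McDiarmid's inequality and absorbing the $n^3$ triples by a union bound, so that every $M$ inherits the required degree estimates from the same high-probability event.
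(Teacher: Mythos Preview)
Your high-level plan—apply Theorem~\ref{hypergraph-matching-thm} to an $8$-uniform hypergraph on $E(M)\cup V_{\mathrm{link}}\cup C_{\mathrm{link}}$ encoding main links, then greedily finish with reserve links—is exactly the paper's strategy, and your verification of~\ref{links-bounded-remainder} and the greedy step are fine. The difference lies in how degree regularity of $\mathcal H_M$ is established: the paper concentrates simple $\Theta(n)$ ``building blocks'' (sizes of $N_{G_1}(v)\cap V_{\mathrm{abs}}$, $N_{G_1}(v)\cap V_{\mathrm{link}}$, $N_{G_1}(u)\cap N_{G_1}(v)\cap V_{\mathrm{link}}$, and $|E^c_{G'}(V_{\mathrm{abs}},V_{\mathrm{link}})|$, $|E^c(G'[V_{\mathrm{link}}])|$), all independent of $M$, and then computes each degree type by multiplicative counting. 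Your alternative via the triple-counts $L_{u,v,w}$ is a legitimate way to get slot degrees and $V_{\mathrm{link}}$-degrees uniformly over all valid $M$.

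However, there is a genuine gap for the colour degrees. The identities $|V_{\mathrm{link}}|\approx 3|E(M)|$ and $|C_{\mathrm{link}}|\approx 4|E(M)|$ only force the \emph{average} colour-degree to match the common value $D$; they say nothing about individual colours. Your $L_{u,v,w}$ sums cleanly to the degree of $uv$ (over $w\in V_{\mathrm{link}}$) and to the degree of $w$ (over $uv\in E(M)$), but no analogous sum produces the degree of a fixed $c\in C_{\mathrm{link}}$. To close this you would need a separate concentration, e.g.\ on $L^{(c)}_{u,v}$, the number of length-$4$ rainbow paths from $u$ to $v$ using colour $c$ with internals in $V_{\mathrm{link}}$, remaining colours in $C_{\mathrm{link}}$, and edges in $G'$; this is $\Theta(n^2)$ and McDiarmid applies, but it is an extra ingredient not implied by what you wrote. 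The paper's building-block route sidesteps this because the per-colour edge counts $|E^c_{G'}(V_{\mathrm{abs}},V_{\mathrm{link}})|$ and $|E^c(G'[V_{\mathrm{link}}])|$ feed directly into a case analysis over which edge of the link carries colour $c$.
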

\begin{proof}
  We choose a new constant $\delta$ such that $\eps \ll \delta \ll \gamma$.  For convenience, let $p \coloneqq p_{\mathrm{link}}$ and $q \coloneqq q_{\mathrm{link}}$, let~$G_1$ be the spanning subgraph of~$G'$ consisting of edges with a colour in $C_{\mathrm{link}}$, and let $G_2$ be the spanning subgraph of~$G'$ consisting of edges with a colour in $C'_{\mathrm{link}}$.  
  First we claim that with high probability the following holds:
  \begin{enumerate}[(a)]
  \item\label{linking-sets-right-size} $|V_{\mathrm{link}}| = (p \pm \eps)n$, $|C_{\mathrm{link}}| = (q \pm \eps)n$, $|V'_{\mathrm{link}}| = (\gamma/3 \pm \eps)n$, and $|C'_{\mathrm{link}}| = (\gamma/3 \pm \eps)n$,
  \item\label{absorbing-vtcs-right-size} $|V_{\mathrm{abs}}| = (1 \pm \eps)p_{\mathrm{abs}} n = (1 \pm \eps)2pn/ 3$, 
  \item\label{linking-neighborhoods-right-size} for all $v\in V$, we have
    \begin{enumerate}[(i)]
    \item $|N_{G_1}(v) \cap V_{\mathrm{abs}}| = (1 \pm \eps)p_{\mathrm{abs}}\beta qn = (1 \pm \eps)2p\beta qn / 3$ and
    \item $|N_{G_1}(v) \cap V_{\mathrm{link}}| = (1 \pm \eps)p\beta q n$,
    \end{enumerate}
  \item\label{right-number-of-c-edges} for all $c \in C$, we have
    \begin{enumerate}[(i)]
    \item $|E_{G'}^c(V_{\mathrm{abs}}, V_{\mathrm{link}})| = (1 \pm \eps)p_{\mathrm{abs}}p\beta n = (1 \pm \eps)2p^2\beta n/3$ and
    \item $|E^c(G'[V_{\mathrm{link}}])| = (1 \pm \eps)p^2\beta n/2$,
    \end{enumerate}
  \item\label{common-linking-nbrhood-right-size}for all distinct $u,v\in V$, we have $|N_{G_1}(u)\cap N_{G_1}(v)\cap V_{\mathrm{link}}| = (1 \pm \eps)p\beta^2q^2 n$, and
  \item\label{common-linking-nbrhood-large-reserve}for all $u,v\in V$ we have $|N_{G_2}(u) \cap N_{G_2}(v) \cap V'_{\mathrm{link}}| \geq  \gamma^6 n$.
  \end{enumerate}

  Indeed~\ref{linking-sets-right-size}--\ref{right-number-of-c-edges} follow from~(\ref{slicearray}) and the Chernoff Bound.  We prove~\ref{common-linking-nbrhood-right-size} and~\ref{common-linking-nbrhood-large-reserve} using McDiarmid's Inequality.  To prove~\ref{common-linking-nbrhood-right-size}, for each $u, v \in V$, we apply McDiarmid's Inequality to the random variable $f$ counting $|N_{G_{1}}(u)\cap N_{G_{1}}(v)\cap V_{\mathrm{link}}|$ with respect to independent binomial random variables $\{X_w, X_{uw}, X_{vw}\}_{w\in V}$ and $\{X_c\}_{c\in C}$, where $X_w$ indicates if $w \in V_{\mathrm{link}}$, $X_{uw}$ and $X_{vw}$ indicate if the edges $uw$ and $vw$ respectively are in~$G'$, and~$X_{c}$ indicates if $c \in C_{\mathrm{link}}$. For each $w\in V$, $X_w$, $X_{uw}$, and $X_{vw}$ affect $f$ by at most one, and for each $c\in C$, $X_c$ affects $f$ by at most two.
  Thus, by McDiarmid's Inequality with $t = \eps \Expect{f}/2$, we have $|N_{G_{1}}(u) \cap N_{G_{1}}(v)\cap V_{\text{link}}| = (1 \pm \eps)p\beta^2q^2 n$ with probability at least $1 - \exp\left(-(\eps p\beta^2q^2 n/2)^2 / 7n\right)$. By a union bound,~\ref{common-linking-nbrhood-right-size} also holds with high probability.  The proof of~\ref{common-linking-nbrhood-large-reserve} is similar, so we omit it.

  Now we assume~\ref{linking-sets-right-size}--\ref{common-linking-nbrhood-large-reserve} hold\COMMENT{i.e. fix an outcome of all the random slicing in which~\ref{linking-sets-right-size}-\ref{common-linking-nbrhood-large-reserve} hold. If there is no~$M$ as in the hypothesis of the lemma then the conclusion holds vacuously. Otherwise we fix any such~$M$ and continue.}, we suppose $M$ is a matching such that $V(M)\subseteq V_{\mathrm{abs}}$ and $|V_{\mathrm{abs}}\setminus V(M)| \leq \eps n$, and we show that~\ref{linking-matching} and~\ref{links-bounded-remainder} hold with respect to $M$.  Since $|V_{\mathrm{abs}}\setminus V(M)| \leq \eps n$,~\ref{absorbing-vtcs-right-size} implies that
  \begin{equation}
    \label{eq:matching-right-size}
    |V(M)| = (1 \pm \sqrt\eps)2pn/3.
  \end{equation}
  
  We apply Theorem~\ref{hypergraph-matching-thm} to the following 8-uniform hypergraph $\mathcal H$: the vertex-set is $E(M) \cup V_{\mathrm{link}} \cup C_{\mathrm{link}}$, and for every $xy \in E(M)$, $v_1, v_2, v_3 \in V_{\mathrm{link}}$, and $c_1, c_2, c_3, c_4 \in C_{\mathrm{link}}$, $\mathcal H$ contains the hyperedge $\{xy, v_1, v_2, v_3, c_1, c_2, c_3, c_4\}$ if there is a main link $P$ such that
  \begin{itemize}
  \item $P$ has ends $x$ and $y$,
  \item $v_1$, $v_2$, and $v_3$ are the internal vertices in $P$, and
  \item $\phi(P) = \{c_1, c_2, c_3, c_4\}$.
  \end{itemize}

\begin{claim}\label{linklemc1}
$d_{\mathcal H}(v) = (1 \pm 2\sqrt{\eps})p^3\beta^4q^4 n^3$ for all $v\in V(\mathcal H)$.
\end{claim}
\claimproof{}
  Let $xy \in E(M)$.  
  By~\ref{linking-sets-right-size}, there are $(1 \pm \eps)pn$ vertices $v_1 \in V_{\mathrm{link}}$ that can be in a link $P$ with ends $x$ and $y$ corresponding to a hyperedge in $\mathcal H$, where $v_1$ is not adjacent to $x$ or $y$.  For each such $v_1\in V_{\mathrm{link}}$, by~\ref{common-linking-nbrhood-right-size}, there are $(1 \pm \eps)p\beta^2 q^2 n$ choices for the vertex in $V_{\mathrm{link}}$ adjacent to both $x$ and $v_1$ in $P$, and for each such $v_2 \in V_{\mathrm{link}}$, again by~\ref{common-linking-nbrhood-right-size}, there are $(1 \pm 2\eps)p\beta^2 q^2 n$ choices for the vertex in $V_{\mathrm{link}}$ adjacent to both $v_1$ and $y$ in $P$ such that $P$ is a main link.  Thus, $d_{\mathcal H}(xy) = (1 \pm 5\eps)p^3\beta^4 q^4 n^3$, as required.

  Now let $v_1 \in V_{\mathrm{link}}$.  First, we count the number of hyperedges in $\mathcal H$ containing $v_1$ corresponding to a link $P$ where $v_1$ is adjacent to one of the ends.  By~\ref{linking-neighborhoods-right-size}, and since $|V_{\mathrm{abs}}\setminus V(M)| \leq \eps n$, there are $(1 \pm \sqrt\eps)2p\beta qn/3$ choices of the vertex $x \in V(M)$ adjacent to $v_1$ in $P$.  For each such $x$, again by~\ref{linking-neighborhoods-right-size}, there are $(1 \pm 2\eps)p\beta q n$ choices of the vertex $v_2 \in V_{\mathrm{link}}$ adjacent to $y$ in $P$ where $xy\in E(M)$.  For each such $v_2 \in V_{\mathrm{link}}$, by~\ref{common-linking-nbrhood-right-size}, there are $(1 \pm 2\eps)p\beta^2 q^2 n$ choices of the vertex $v_3\in V_{\mathrm{link}}$ adjacent to both $v_1$ and $v_2$ in $P$.  Thus, the number of hyperedges in $\mathcal H$ containing $v_1$ corresponding to a link where $v_1$ is adjacent to one of the ends is $(1 \pm 2\sqrt\eps)2p^3\beta^4 q^4n^3/3$.

  Next, we count the number of hyperedges in $\mathcal H$ containing $v_1$ corresponding to a link $P$ where $v_1$ is not adjacent to one of the ends.  By~\eqref{eq:matching-right-size}, there are $(1 \pm \sqrt{\eps})pn/3$ choices for the edge $xy \in E(M)$ where $x$ and $y$ are the ends of $P$.  For each such $xy\in E(M)$, by~\ref{common-linking-nbrhood-right-size}, there are $(1 \pm \eps)p\beta^2 q^2 n$ choices of the vertex $v_2 \in V_{\mathrm{link}}$ such that $v_2$ is adjacent to $x$ and $v_1$ in $P$, and again by~\ref{common-linking-nbrhood-right-size}, for each such $v_2\in V_{\mathrm{link}}$, there are $(1 \pm 2\eps)p\beta^2 q^2n$ choices of the vertex $v_3\in V_{\mathrm{link}}$ adjacent to both $y$ and $v_1$ in $P$.  Thus, the number of hyperedges in $\mathcal H$ containing $v_1$ corresponding to a link where $v_1$ is not adjacent to one of the ends is $(1 \pm 2\sqrt{\eps})p^3\beta^4q^4 n^3/3$, so
  \begin{equation*}
    d_{\mathcal H}(v_1) = (1 \pm 2\sqrt\eps)(2p^3\beta^4 q^4 n^3/3) + (1 \pm 2\sqrt{\eps})p^3\beta^4q^4 n^3/3 = (1 \pm 2\sqrt{\eps})p^3\beta^4q^4 n^3,
  \end{equation*}
  as required.

  Now let $c_1 \in C_{\mathrm{link}}$.  First we count the number of hyperedges in $\mathcal H$ containing $c_1$ corresponding to a link $P$ where $c_1$ is the colour of one of the edges incident to an end of $P$.  By~\ref{right-number-of-c-edges}, and since $|V_{\mathrm{abs}}\setminus V(M)| \leq \eps n$, there are $(1 \pm \sqrt\eps)2p^2\beta n/3$ choices of the edge $xv_1$ in $P$ where $x\in V(M)$ is an end of $P$ and $\phi(xv_1) = c_1$.  For each such edge $xv_1$, by~\ref{linking-neighborhoods-right-size}, there are $(1\pm 2\eps) p\beta q n$ choices of the vertex $v_2\in V_{\mathrm{link}}$ adjacent to $y$ in $P$ where $xy \in E(M)$.  For each such vertex $v_2$, by~\ref{common-linking-nbrhood-right-size}, there are $(1 \pm 2\eps)p\beta^2q^2 n$ choices of the vertex $v_3$ adjacent to both $v_1$ and $v_2$ in $P$.  Thus, the number of hyperedges in $\mathcal H$ containing $c_1$ corresponding to a link where $c_1$ is the colour of one of the edges incident to an end of $P$ is $(1 \pm 2\sqrt\eps)2p^4\beta^4 q^3 n^3/ 3$.

  Next, we count the number of hyperedges in $\mathcal H$ containing $c_1$ corresponding to a link $P$ where $c_1$ is the colour of one of the edges with both ends in $V_{\mathrm{link}}$.  By~\ref{right-number-of-c-edges}, there are $(1 \pm \eps)p^2 \beta n/2$ choices for the edge $v_1v_2$ in $P$ such that $\phi(v_1v_2) = c_1$, and thus $(1\pm \eps)p^2 \beta n$ choices for the edge if we assume $v_1$ is adjacent to an end in $P$.  For each such edge $v_1v_2$, by~\ref{linking-neighborhoods-right-size}, and since $|V_{\mathrm{abs}}\setminus V(M)| \leq \eps n$, there are $(1 \pm \sqrt\eps)2p\beta q n/3$ choices for the vertex $x \in V(M)$ adjacent to $v_1$ in $P$.  For each such vertex $x$, by~\ref{common-linking-nbrhood-right-size}, there are $(1 \pm 2\eps)p\beta^2 q^2n$ choices for the vertex $v_3$ adjacent to both $y$ and $v_2$ in $P$, where $xy \in E(M)$.  Thus, the number of hyperedges in $\mathcal H$ containing $c_1$ corresponding to a link where $c_1$ is the colour of one of the edges with both ends in $V_{\mathrm{link}}$ is $(1 \pm 2\sqrt\eps)2p^4 \beta^4 q^3 n^3/ 3$, so by~(\ref{slicearray})
  \begin{equation*}
    d_{\mathcal H}(c_1) = (1\pm 2\sqrt{\eps})4p^4\beta^4 q^3 n^3 / 3 = (1\pm 2\sqrt{\eps})p^3\beta^4 q^4 n^3,
  \end{equation*}
  as required to prove Claim~\ref{linklemc1}.
  \endclaimproof{}
\begin{claim}\label{linklemc2}
 $\Delta^c(\mathcal H) \leq 100n^2$.
 \end{claim}
 This can be proved similarly as above (with room to spare).\COMMENT{\claimproof{}
 Let $xy \in E(M)$, $v_1, v_2 \in V_{\mathrm{link}}$, and $c_1, c_2 \in C_{\mathrm{link}}$.  There are at most $3n^2$ links with ends $x$ and $y$ containing $v_1$, so the codegree of pairs in $E(M)\times V_{\mathrm{link}}$ is at most $3n^2$, as required.  Similarly, there are at most $2n^2$ links with ends $x$ and $y$ such that the colour of the edge incident to $x$ or $y$ is $c_1$, and there are at most $2n^2$ links with ends $x$ and $y$ such that the colour of one of the edges not incident to $x$ or $y$ is $c_1$, so the codegree of pairs in $E(M)\times C_{\mathrm{link}}$ is at most $4n^2$, as required.  The codegree of pairs in $E(M)\times E(M)$ is zero.
  Now we count the number of hyperedges in $\mathcal H$ containing $v_1$ and $c_1$ corresponding to a link~$P$ where an edge $e$ in $P$ incident to $v_1$ is coloured $c_1$.  If $e$ is incident to an end $x$ of $P$, then the other end of $P$ is $y$ where $xy \in E(M)$.  In this case, there are at most $n^2$ choices for the internal vertices of $P$ other than $v_1$.  If $e$ is not incident to an end of $P$, then let $v_2$ denote the end of $e$ that is not $v_1$.  There are at most $|E(M)|$ choices for the ends $x$ and $y$ of $P$ and at most $n$ choices for the other internal vertex $v_3$ of $P$.  For each such choice of $xy \in E(M)$ and $v_3\in V_{\mathrm{link}}$, there are at most six links with these vertices.  Since $|E(M)| \leq n/2$, there are at most $4n^2$ links $P$ containing $v_1$ where an edge in $P$ incident to $v_1$ is coloured $c_1$.  Now we count the number of hyperedges in $\mathcal H$ containing $v_1$ and $c_1$ corresponding to a link $P$ containing $v_1$ and an edge $e$ coloured $c_1$ that is not incident to $v_1$.  If $e$ is incident to an end of $P$, say $x$, then the other end of $P$ is $y$ where $xy\in E(M)$.  In this case, there are at most~$n$ choices for the other internal vertex of $P$, and since there are at most $n/2$ possibilities for the edge $e$ coloured $c_1$, there are at most~$n^2$ such links.  If $e$ is not incident to an end of $P$, then the internal vertices of $P$ are determined. Since there are at most $n/2$ possibilities for the edge $e$ and $|E(M)| \leq n/2$, there are at most $n^2$ such links.  Thus, there are at most $2n^2$ links $P$ containing $v_1$ such that there is an edge in $P$ not incident to $v_1$ that is coloured $c_1$, so the codegree of pairs in $V_{\mathrm{link}}\times C_{\mathrm{link}}$ is at most $6n^2$, as required.
  Now we count the number of hyperedges in $\mathcal H$ containing $v_1$ and $v_2$, where $v_{1},v_{2}\in V_{\text{link}}$.  If $P$ is a link containing $v_1$ and $v_2$, then there are at most $n$ choices for the other internal vertex $v_3$ of $P$ and at most $n/2$ choices for the ends $x$ and $y$ of $P$.  For each such choice of $v_3, x$, and $y$, there are at most six links containing these vertices, so there at most $3n^2$ links containing $v_1$ and $v_2$.  Thus, the codegree of pairs in $V_{\mathrm{link}}\times V_{\mathrm{link}}$ is at most $3n^2$, as required.
  Next, we count the number of hyperedges in $\mathcal H$ containing $c_1$ and $c_2$ corresponding to a link $P$ containing edges $e_1$ and $e_2$ such that $\phi(e_i) = c_i$ for $i\in\{1, 2\}$ such that $e_1$ and $e_2$ do not share an end.  At least one of $e_1$ and $e_2$, say $e_1$, is incident to an end $x$ of $P$, and this determines the other end~$y$ of~$P$, where $xy\in E(M)$.  If $e_2$ is incident to $y$, then two of the internal vertices of $P$ are determined by $e_1$ and $e_2$, and there are at most $n$ choices for the other internal vertex.  If $e_2$ is not incident to $y$, then $e_1$ and $e_2$ determine the internal vertices of $P$, but there are up to $n/2$ possible choices for the edge $e_2$.  Since there are at most $n/2$ possible choices for the edge $e_1$, there are at most $2n^2$ such links.
    Finally, we count the number of hyperedges in $\mathcal H$ containing $c_1$ and $c_2$ corresponding to a link $P$ containing edges $e_1$ and $e_2$ such that $\phi(e_i) = c_i$ for $i\in\{1, 2\}$ such that $e_1$ and $e_2$ share an end $v_1$.  If $v_1$ is adjacent to an end $x$ of $P$, then the other end of $P$ is $y$ where $xy\in E(M)$, and there are at most $n$ choices for the internal vertex of $P$ not adjacent to $v_1$.  If $v_1$ is not adjacent to an end of $P$, then there are $|E(M)|$ choices for the ends of $P$.  Since there are at most $n$ choices for the vertex $v_1$, the total number of such links is at most $3n^2$.  Therefore the codegree of pairs in $C_{\mathrm{link}}\times C_{\mathrm{link}}$ is at most $5n^2$, as required.
 \endclaimproof{}}
    Let $\mathcal F \coloneqq \{E(M), V_{\mathrm{link}}, C_{\mathrm{link}}\}$. By Theorem~\ref{hypergraph-matching-thm}, $\mathcal H$ has a $(\delta, \mathcal F)$-perfect matching $\mathcal M$.  Let $\mathcal P_1$ be the collection of links corresponding to $\mathcal M$, and let $M'$ be the matching consisting of all those $xy\in E(M)$ that are not covered by $\mathcal M$.  To complete the proof, we greedily find a collection $\mathcal P_2$ of reserve links that links $M'$.
    
    Write $E(M') = \{x_1y_1, \dots, x_ky_k\}$, and suppose $P_i$ is a reserve link with ends $x_i$ and $y_i$ for $i < j$, where $j \in [k]$.  We show that that there is a reserve link $P_j$ that is vertex- and colour-disjoint from $\bigcup_{i < j}P_i$, which implies that $\bigcup_{i=1}^j P_i$ links $\{x_1y_1, \dots, x_jy_j\}$, and thus we can choose $\mathcal P_2$ greedily.  Since $k \leq \delta n$ and each link has at most three vertices in $V'_{\mathrm{link}}$, by~\ref{linking-sets-right-size}, there is a vertex $v \in V'_{\mathrm{link}}\setminus \bigcup_{i<j}V(P_i)$.  By~\ref{common-linking-nbrhood-large-reserve}, there are at least $\gamma^6 n - 11j$ vertices $v_1 \in (N_{G_2}(x_{j})\cap N_{G_2}(v) \cap V'_{\mathrm{link}})\setminus \bigcup_{i < j}V(P_i)$ such that $\phi(x_{j}v_1), \phi(v_{1}v) \notin \bigcup_{i < j}\phi(P_i)$, and since $j/n \leq \delta \ll \gamma$, we may let $v_1$ be such a vertex.  Similarly, by~\ref{common-linking-nbrhood-large-reserve}, there is a vertex $v_2 \in (N_{G_2}(y_{j})\cap N_{G_2}(v) \cap V'_{\mathrm{link}}) \setminus \bigcup_{i < j}V(P_i)$ such that $\phi(y_{j}v_2), \phi(v_{2}v) \notin \bigcup_{i < j}\phi(P_i) \cup \{\phi(x_{j}v_1), \phi(v_{1}v)\}$.  Now there is a reserve link $P_j$ with ends $x_j$ and $y_j$ and internal vertices $v$, $v_1$, and $v_2$ that is vertex- and colour-disjoint from $\bigcup_{i < j}P_{i}$, as claimed, and therefore there exists a collection $\mathcal P_2$ of reserve links that links $M'$.  Now $\mathcal P_1 \cup \mathcal P_2$ links $M$, so~\ref{linking-matching} holds.  By~\ref{linking-sets-right-size}, and since $\mathcal M$ is $(\delta, \mathcal F)$-perfect,~\ref{links-bounded-remainder} holds, as required.
\end{proof}

\subsection{Proof}
We now have all the tools we need to prove Lemma~\ref{main-absorber-lemma}.

\lateproof{Lemma~\ref{main-absorber-lemma}}
Consider an absorber partition of~$V$,~$C$, and~$K_{n}$.
  By Lemmas~\ref{absorbing-template-lemma},~\ref{greedy-absorber-lemma}, and~\ref{linking-lemma}, there exists an outcome of the absorber partition satisfying the conclusions of these lemmas simultaneously.  In particular, by Lemmas~\ref{absorbing-template-lemma} and~\ref{greedy-absorber-lemma} there exists $H, \mathcal A$, and $M$ such that, writing~$(V_{\text{res}},C_{\text{res}})$ for the bipartition of~$H$,
  \begin{itemize}
  \item $H\cong \tilde H$ is a $36\gamma$-absorbing template satisfying~\ref{template-flexible-sets} and~\ref{template-buffer-sets},
  \item $\mathcal A$ and $H$ satisfy \ref{greedily-choosing-absorber}, and
  \item $M$ satisfies \ref{greedily-choosing-matching}.
  \end{itemize}
  Write $\mathcal A = \{A_1, \dots, A_k\}$ and $E(M) = \{a_1b_1, \dots, a_\ell b_\ell\}$.
  Consider $M_1\,\dot{\cup}\, M_2\,\dot{\cup}\, M_3\,\dot{\cup}\, M_4$, where~$M_{i}$ is a matching satisfying~(Mi) for $i\in[4]$ (see Section~\ref{overviewsec}).
  By~\ref{greedily-choosing-matching} we have $|V_{\text{abs}}\setminus V(M_{1}\cup\dots\cup M_{4})|\leq 5\eps n+1\leq 6\eps n$.
  Thus by Lemma~\ref{linking-lemma} there exist collections of links $\mathcal P$ and $\mathcal T$ in~$G'$ such that
  \begin{itemize}
  \item $\mathcal P\cup\mathcal T$ is a collection of links satisfying~\ref{linking-matching} with respect to $\bigcup_{i=1}^4M_i$ and
  \item $\mathcal P\cup\mathcal T$ satisfies~\ref{links-bounded-remainder}.
  \end{itemize}
 In particular $\mathcal P$ links $\bigcup_{i=1}^3M_i$ and $\mathcal T$ links $M_4$. Let $T \coloneqq M\cup\bigcup_{P\in \mathcal T}P$. By Fact~\ref{factref},  $(\mathcal A, \mathcal P, T, H)$ is a $36\gamma$-absorber, as desired.  Moreover, since $H$ satisfies~\ref{template-flexible-sets} and~\ref{template-buffer-sets}, $M$ satisfies \ref{greedily-choosing-matching}, and $\mathcal P\cup\mathcal T$ satisfies~\ref{links-bounded-remainder}, we have $\bigcup_{A\in\mathcal A}A \cup \bigcup_{P\in \mathcal P}P \cup T$  is contained in $(V', C', G')$ with $\gamma$-bounded remainder, as required.
\endproof

\section{Finding many well-spread absorbing gadgets}\label{switching-section}
The aim of this section is to prove Lemma~\ref{main-switching-lemma}, which states that, for appropriate~$\mu,\eps$, almost all $1$-factorizations of~$K_{n}$ are $\eps$-locally edge-resilient and $\mu$-robustly gadget-resilient.
We will use switchings in~$\cG_{D}^{\text{col}}$ for appropriate $D\subsetneq [n-1]$ to analyse the probability that a uniformly random $G\in \cG_{D}^{\text{col}}$ satisfies the necessary properties, and then use a `weighting factor' (see Corollary~\ref{wf}) to make comparisons to the probability space corresponding to a uniform random choice of $G\in\cG_{[n-1]}^{\text{col}}$.
\subsection{Switchings}
We begin by analysing the property of $\eps$-local edge-resilience.
\begin{lemma}\label{localedge}
Suppose $1/n \ll \eps\ll1$, and let $D\subseteq[n-1]$ have size $|D|=\eps n$.
Suppose $\mathbf{G}\in\cG_{D}^{\text{col}}$ is chosen uniformly at random.
Then $\prob{\mathbf{G}\,\text{is}\,\,\eps\text{-locally}\,\text{edge-resilient}\,}\geq 1- \exp(-\eps^{3}n^{2}/1000)$.
\end{lemma}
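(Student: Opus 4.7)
The plan is to prove a tail bound for a single large $V'$ and combine with a union bound over the at most $2^{n}$ choices of $V'$. Since $|D|=\eps n$, the condition $|D'|\geq \eps n$ in the definition of $\eps$-local edge-resilience forces $D'=D$, so the statement reduces to showing that $e(\mathbf{G}[V'])\geq \eps^{3}n^{2}/100$ for every $V'\subseteq V$ with $|V'|\geq \eps n$. A direct computation gives $\Expect{e(\mathbf{G}[V'])}=\binom{|V'|}{2}\cdot \eps n/(n-1)\geq \eps^{3}n^{2}/2$, so the target threshold lies a factor of about $50$ below the mean, leaving ample room for concentration. Since $2^{n}\leq \exp(\eps^{3}n^{2}/2000)$ for large $n$, it suffices to prove
\[
\prob{e(\mathbf{G}[V'])<\eps^{3}n^{2}/100}\leq \exp(-\eps^{3}n^{2}/500)
\]
for each fixed $V'$ with $|V'|\geq \eps n$.

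To bound this per-$V'$ probability I plan to use a switching argument inside $\cG_{D}^{\text{col}}$. For $k\geq 0$, set $\cS_{k}\coloneqq\{G\in \cG_{D}^{\text{col}}\colon e(G[V'])=k\}$. A forward switching from $G\in \cS_{k}$ chooses a colour $c\in D$ and an ordered pair of $c$-coloured edges $x_{1}y_{1},x_{2}y_{2}\in E(G)$ with $x_{1},x_{2}\in V'$ and $y_{1},y_{2}\in V\setminus V'$, subject to the non-collision conditions $x_{1}x_{2},y_{1}y_{2}\notin E(G)$, and then replaces these two edges by $x_{1}x_{2}$ and $y_{1}y_{2}$, both still of colour $c$; the result is a graph in $\cS_{k+1}$. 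Double-counting the forward data from $\cS_{k}$ against the corresponding reverse data on $\cS_{k+1}$ (an ordered choice of a $V'$-$V'$ edge and a same-colour $V\setminus V'$-$V\setminus V'$ edge in $G'$) yields a bound on the ratio $|\cS_{k}|/|\cS_{k+1}|$. The aim is a uniform bound $|\cS_{k}|/|\cS_{k+1}|\leq \rho<1$ valid for all $k\leq \eps^{3}n^{2}/100$; chaining these ratios and using $|\cG_{D}^{\text{col}}|\geq |\cS_{\lceil \eps^{3}n^{2}/100\rceil}|$, the resulting geometric tail gives the required concentration.

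For the forward count, I would apply Cauchy--Schwarz to $\sum_{c}b_{c}=|V'|\eps n-2k$, where $b_{c}$ is the number of $V'$-to-$V\setminus V'$ edges in colour class $c$, to deduce $\sum_{c}b_{c}(b_{c}-1)=\Omega(|V'|^{2}\eps n)$; the reverse count from $\cS_{k+1}$ is at most a constant times the number of $V'$-$V'$ edges times the number of same-colour $V\setminus V'$-$V\setminus V'$ edges, both of which are small for $k\leq \eps^{3}n^{2}/100$. The main obstacle is controlling the \emph{invalid} forward switchings — those where $x_{1}x_{2}$ or $y_{1}y_{2}$ already lies in $E(G)$ in some other colour class — particularly when $|V'|$ is close to $\eps n$ and the $V\setminus V'$-part of $G$ is dense enough to block a non-negligible fraction of candidate switchings. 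I expect to handle this either by a sharper count bounding blocking pairs via the codegree-style sum $\sum_{y\in V\setminus V'}\deg_{V'}(y)\deg_{V\setminus V'}(y)$, or by passing to a slightly more elaborate two-colour switching variant in the problematic regime; the large mean-to-threshold gap should leave enough slack to afford a switching ratio $\rho$ bounded away from $1$ uniformly in $k$ and $V'$.
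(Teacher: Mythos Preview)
Your switching strategy is the right idea and matches the paper in spirit, but the 4-cycle switching you propose does not reach the required threshold $\eps^{3}n^{2}/100$. The obstacle you flag --- forward switchings blocked by $y_{1}y_{2}\in E(G)$ --- is not a technicality: it costs a full factor of~$\eps$. Your codegree bound gives blocking at most $\sum_{y\in V\setminus V'}\deg_{V'}(y)\deg_{V\setminus V'}(y)$, and since $\deg_{V'}(y)+\deg_{V\setminus V'}(y)=\eps n$ with $\sum_{y}\deg_{V'}(y)\approx\eps^{2}n^{2}$, this sum can be as large as $(1-2\eps)\eps^{3}n^{3}/(1-\eps)$, leaving only about $\eps^{4}n^{3}$ valid forward switchings. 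This is essentially tight: partition $V\setminus V'$ into roughly $1/\eps$ blocks of size $\eps n$, assign each block about $\eps^{2}n$ colours which match $V'$ bijectively into that block, and let every other colour match internally within each block. Then $e(G[V'])=0$, yet for each colour $c$ the set $Y_{c}$ is a single block inside which $G$ is nearly complete, so almost every candidate pair $(y_{1},y_{2})$ is blocked. With valid forward count $\Theta(\eps^{4}n^{3})$ and reverse count at most $2(k+1)n$, the ratio $|\cS_{k}|/|\cS_{k+1}|\leq 1/2$ holds only for $k=O(\eps^{4}n^{2})$, a factor $\eps$ below what the lemma demands.

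The paper's fix is to use a 6-cycle (three-edge) switching. One picks a non-edge $uv$ inside $V'$ and a colour $c$ for which the $c$-neighbours $w,z$ of $v,u$ lie in $V\setminus V'$; then one picks a \emph{free} $c$-edge $xy$ with both ends outside $V'\cup N_{G}(w)\cup N_{G}(z)$. There are at least $n/4$ choices for $xy$, and by construction the new edges $wx,yz$ are automatically non-edges of $G$ --- no blocking analysis is needed at all. The extra factor $n$ in the forward count gives $\delta_{s}\geq\Omega(\eps^{3}n^{4})$ against $\Delta_{s+1}=O((s+1)n^{2})$, so the ratio is at most $1/2$ for all $s\leq\eps^{3}n^{2}/80$. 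Your ``more elaborate switching variant'' is pointing in the right direction; the concrete realisation is simply to lengthen the alternating cycle by one $c$-edge and use that extra freedom to absorb the non-edge constraints.
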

\begin{proof}
Note that if $G\in\cG_{D}^{\text{col}}$ has at least~$\eps^{3}n^{2}/100$ edges with endpoints in~$V'$ for all choices of $V'\subseteq V$ of size precisely~$\eps n$, then~$G$ is $\eps$-locally edge-resilient.\COMMENT{In any larger sets $V'\subseteq V$, we can just find all the necessary edges in any subset $V''\subseteq V'$ of size precisely~$\eps n$. Further, of course, $D$ is the only subset of $D$ of size at least $\eps n$.}
Fix $V'\subseteq V$ of size precisely $\eps n$.
For any $G\in\cG_{D}^{\text{col}}$, we say that a subgraph $H\subseteq G$ together with a labelling of its vertices $V(H)=\{u,v,w,x,y,z\}$ is a \textit{spin system} of~$G$ if $E(H)=\{vw, xy, zu\}$, where $u,v\in V'$, $w,x,y,z\in V\setminus V'$, $uv, wx, yz\notin E(G)$, and $\phi_{G}(vw)=\phi_{G}(xy)=\phi_{G}(zu)$.
(Note that different labellings of a subgraph $H\subseteq G$ that both satisfy these conditions will be considered to correspond to different spin systems of~$G$.)
We now define the \textit{spin} switching operation.
Suppose $G\in\cG_{D}^{\text{col}}$ and $H\subseteq G$ is a spin system.
Then\COMMENT{In this case it may have been easier to just define $\text{spin}_{(u,v,w,x,y,z)}(G)$ for a sequence of vertices $u,v,w,x,y,z$, and avoid the need for defining a `spin system'. However, this approach won't work for our main switching later where the switching system has 14 vertices. It seems easiest to refer to a switching in terms of a subgraph like this, and to be consistent about it so I'm using it here. (And of course, a subgraph could correspond to several different switchings depending on which vertices play what role, so the labelling is important.)} we define~$\text{spin}_{H}(G)$ to be the coloured graph obtained from~$G$ by deleting the edges $vw,xy,zu$, and adding the edges $uv, wx, yz$, each with colour~$\phi_{G}(vw)$.
Writing $G'\coloneqq\text{spin}_{H}(G)$, we have $G'\in\cG_{D}^{\text{col}}$ and $e_{V',D}(G')=e_{V',D}(G)+1$.

We define a partition~$\{M_{s}\}_{s=0}^{\binom{\eps n}{2}}$ of~$\cG_{D}^{\text{col}}$ by setting $M_{s}\coloneqq \{G\in\cG_{D}^{\text{col}}\colon e_{V',D}(G)=s\}$, for each $s\in[\binom{\eps n}{2}]_{0}$.
For each $s\in[\binom{\eps n}{2}-1]_{0}$ we define an auxiliary bipartite multigraph\COMMENT{We can have two edges between $G\in M_{s}$ and $G'\in M_{s+1}$ if say there is an $H\subseteq G$ such that $\text{spin}_{H}(G)=G'$. In this case we find a second spin system $H'$ such that $\text{spin}_{H'}(G)=G'$, by swapping the roles of $u$ and $v$, swapping the roles of $w$ and $z$, and swapping the roles of $x$ and $y$.}~$B_{s}$ with vertex bipartition $(M_{s}, M_{s+1})$, where for each $G\in M_{s}$ and each spin system $H\subseteq G$ we put an edge in~$B_{s}$ with endpoints $G\in M_{s}$ and $\text{spin}_{H}(G)\in M_{s+1}$.
Define $\delta_{s}\coloneqq \min_{G\in M_{s}}d_{B_{s}}(G)$ and $\Delta_{s+1}\coloneqq \max_{G\in M_{s+1}}d_{B_{s}}(G)$.
Observe, by double counting~$e(B_{s})$, that $|M_{s}|/|M_{s+1}|\leq \Delta_{s+1}/\delta_{s}$.
To bound~$\Delta_{s+1}$ from above, we fix $G'\in M_{s+1}$ and bound the number of pairs~$(G,H)$, where $G\in M_{s}$ and~$H$ is a spin system of~$G$ such that $\text{spin}_{H}(G)=G'$.
There are~$s+1$ choices for the edge $e\in E_{V',D}(G')$ created by a spin operation, and~$2$ choices for which endpoint of~$e$ played the role of~$u$ in a spin, and which played the role of~$v$.
Now there are at most~$(n/2)^{2}$ choices for two edges with colour~$\phi_{G'}(e)$ in~$G'$ with both endpoints outside of~$V'$, and at most~$8$ choices for which endpoints of these edges played the roles of $w,x,y,z$ in a spin operation yielding~$G'$.
We deduce that $\Delta_{s+1}\leq 4(s+1)n^{2}$.

Suppose that $s\leq \eps^{3}n^{2}/80$.
To bound~$\delta_{s}$ from below, we fix $G\in M_{s}$ and find a lower bound for the number of spin systems\COMMENT{Even if spinning on two different spin systems yields the same graph, they will correspond to two different edges of~$B_{s}$.} $H\subseteq G$.
For a vertex $v\in V'$, let $D_{G}^{*}(v)\subseteq D$ denote the set of colours~$c\in D$ such that the $c$-neighbour of~$v$ is not in~$V'$, in~$G$.
Let $V_{G}^{*}\coloneqq\{v\in V'\colon |D_{G}^{*}(v)|\geq 9\eps n/10\}$, and suppose for a contradiction that $|V_{G}^{*}|< 9\eps n/10$.
Then there are at least~$\eps n/10$ vertices $v\in V'$ for which there are at least~$\eps n/10$ colours $c\in D$ such that the $c$-neighbour of~$v$ is in~$V'$, in~$G$, whence $s=e_{V',D}(G)\geq \eps^{2}n^{2}/200> \eps^{3}n^{2}/80\geq s$, a contradiction.
Note further that, since $s\leq \eps^{3}n^{2}/80$, there are at least $\binom{9\eps n/10}{2}-\eps^{3}n^{2}/80\geq \eps^{2}n^{2}/4$ pairs~$\{a,b\}\in\binom{V_{G}^{*}}{2}$ such that $ab\notin E(G)$.
For each such choice of~$\{a,b\}$, there are two choices of which vertex will play the role of~$u$ and which will play the role of~$v$ in a spin system.
Since $u,v\in V_{G}^{*}$, there are at least~$4\eps n/5$ colours $c\in D$ such that the $c$-neighbour~$z$ of~$u$, and the $c$-neighbour~$w$ of~$v$, are such that $w,z\in V\setminus V'$, in~$G$.
Finally, there are at least~$n/2-3\eps n\geq n/4$ edges coloured~$c$ in~$G$ with neither endpoint in~$V'\cup N_{G}(w)\cup N_{G}(z)$, and two choices of which endpoint of such an edge will play the role of~$x$, and which will play the role of~$y$.
We deduce that $\delta_{s}\geq \eps^{3}n^{4}/5$.
Altogether, we conclude that if $s\leq \eps^{3}n^{2}/80$ and~$M_{s}$ is non-empty, then~$M_{s+1}$ is non-empty and $|M_{s}|/|M_{s+1}|\leq 20(s+1)n^{2}/\eps^{3}n^{4} \leq 1/2$.

Now, fix $s\leq \eps^{3}n^{2}/100$.
If~$M_{s}$ is empty, then\COMMENT{Note $\prob{e_{V',D}(\mathbf{G})=s}=|M_{s}|/|\cG_{D}^{\text{col}}|$, so we need to know $\cG_{D}^{\text{col}}$ is non-empty to know that we are not dividing by zero. But this follows from the usual existence results of $1$-factorizations of complete graphs. (Just restrict such a $1$-factorization to our colour set of interest)} $\prob{e_{V',D}(\mathbf{G}) =s}=0$.
If~$M_{s}$ is non-empty, then
\[
\prob{e_{V',D}(\mathbf{G}) =s} = \frac{|M_{s}|}{|\cG_{D}^{\text{col}}|}\leq \frac{|M_{s}|}{|M_{\eps^{3}n^{2}/80}|}= \prod_{j=s}^{\eps^{3}n^{2}/80-1}\frac{|M_{j}|}{|M_{j+1}|} \leq \left(\frac{1}{2}\right)^{\eps^{3}n^{2}/80-s},
\]
and thus\COMMENT{Note the middle expression is bounded above by $\left(\frac{\eps^{3}n^{2}}{100}+1\right)\exp\left(-\frac{\eps^{3}n^{2}}{400}\ln2\right)$.} 
\[
\prob{e_{V',D}(\mathbf{G}) \leq \eps^{3}n^{2}/100} \leq \sum_{s=0}^{\eps^{3}n^{2}/100}\exp(-(\eps^{3}n^{2}/80-s)\ln2) \leq \exp\left(-\frac{\eps^{3}n^{2}}{800}\right).
\]
A union bound over all choices of $V'\subseteq V$ of size~$\eps n$ now completes the proof.\COMMENT{$\prob{\mathbf{G}\,\text{is}\,\text{not}\,\eps\text{-locally}\,\text{edge-resilient}}\leq \binom{n}{\eps n}\exp(-\eps^{3}n^{2}/800)\leq\exp(-\eps^{3}n^{2}/1000)$.}
\end{proof}
We now turn to showing that for suitable $D\subseteq [n-1]$, almost all $G\in\cG_{D}^{\text{col}}$ are robustly gadget-resilient, which turns out to be a much harder property to analyse than local edge-resilience, and we devote the rest of this section to it.
We first need to show that almost all $G\in\cG_{D}^{\text{col}}$ are `quasirandom', in the sense that small sets of vertices do not have too many crossing edges.
\begin{defin}
Let $D\subseteq [n-1]$.
We say that $G\in\cG_{D}^{\text{col}}$ is \textit{quasirandom} if for all sets $A,B\subseteq V$, not necessarily distinct, such that $|A|=|B|=|D|$, we have that $e_{G}(A,B)<8(|D|-1)^{3}/n$.
We define $\cQ_{D}^{\text{col}}\coloneqq\{G\in\cG_{D}^{\text{col}}\colon G\,\text{is}\,\text{quasirandom}\}$.\COMMENT{We are using this quasirandomness definition that doesn't see the colours, but investigating it in the context of coloured graphs, because we wish to know about the probability of obtaining this quasirandomness property in the space where we pick a coloured graph u.a.r., which is a fundamentally different probability space to the one in which we just pick regular graphs u.a.r.}
\end{defin}
When we are analysing switchings to study the property of robust gadget-resilience (see Lemma~\ref{masterswitch}), it will be important to condition on this quasirandomness.
One can use another switching argument to show that almost all $G\in\cG_{D}^{\text{col}}$ are quasirandom.
\begin{lemma}\label{quasirandom}
Suppose that $1/n\ll\mu\ll1$, let $D\subseteq [n-1]$ have size $|D|=\mu n+1$. 
Suppose that $\mathbf{G}\in\cG_{D}^{\text{col}}$ is chosen uniformly at random.
Then $\prob{\mathbf{G}\in\cQ_{D}^{\text{col}}}\geq 1- \exp(-\mu^{3}n^{2})$.
\end{lemma}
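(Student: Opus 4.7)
The plan is to run the switching argument of Lemma~\ref{localedge} in reverse, with a switching that \emph{decreases} $e_G(A,B)$ by one. Fix $A,B\subseteq V$ with $|A|=|B|=|D|$ and set $M_s\coloneqq\{G\in\cG_D^{\text{col}}:e_G(A,B)=s\}$. A switching on $G\in M_s$ is specified by an edge $ab\in E_G(A,B)$ of some colour $c$ together with an ordered pair $(x,y)$ such that $xy$ is a $c$-edge with $x,y\in V\setminus(A\cup B)$ and $ax,by\notin E(G)$; replacing $\{ab,xy\}$ by $\{ax,by\}$ (both coloured $c$) produces a graph $G'\in M_{s-1}\cap\cG_D^{\text{col}}$, since the colour-$c$ class remains a perfect matching and the edge $ab\in E_G(A,B)$ is swapped for two edges $ax,by$ which both lie outside $E(A,B)$.

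I would build the auxiliary bipartite multigraph $B_s$ on $M_s\cup M_{s-1}$ with one edge per switching and bound its extremal degrees. Given $G'\in M_{s-1}$, any switching producing $G'$ is determined by the triple $(c,a,b)\in D\times A\times B$, because $x,y$ are then forced to be the $c$-neighbours of $a,b$ in $G'$; hence $\Delta_{s-1}\leq|D|^3$. For $G\in M_s$, each of the $s$ edges in $E_G(A,B)$ can be extended in at least $n-6|D|\geq n/2$ ways to a valid ordered pair $(x,y)$: each colour class is a perfect matching of $n/2$ edges, at most $2|D|$ of which touch $A\cup B$, and at most $2|D|$ more of the ordered $c$-arcs remaining in $V\setminus(A\cup B)$ can violate one of $ax,by\notin E(G)$, since $a$ and $b$ have only $|D|$ neighbours each. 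Hence $\delta_s\geq sn/2$ and
\begin{equation*}
\frac{|M_s|}{|M_{s-1}|}\;\leq\;\frac{\Delta_{s-1}}{\delta_s}\;\leq\;\frac{2|D|^3}{sn}.
\end{equation*}

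Setting $s_0\coloneqq 4|D|^3/n$, the right-hand side is at most $1/2$ for every $s\geq s_0$, so iterating gives $|M_s|\leq(1/2)^{s-s_0}|M_{s_0}|\leq(1/2)^{s-s_0}|\cG_D^{\text{col}}|$ for $s\geq s_0$. Using $|D|=\mu n+1$ with $\mu\ll 1$, one checks that $8(|D|-1)^3/n-s_0\geq 3\mu^3 n^2$, so summing the geometric tail yields
\begin{equation*}
\prob{e_{\mathbf G}(A,B)\geq 8(|D|-1)^3/n}\;\leq\;\sum_{s\geq 8(|D|-1)^3/n}(1/2)^{s-s_0}\;\leq\;2\cdot 2^{-3\mu^3 n^2}\;\leq\;\exp(-2\mu^3 n^2).
\end{equation*}
A union bound over the $\binom{n}{|D|}^2\leq\exp(O(\mu n\ln(1/\mu)))$ choices of $(A,B)$ will then conclude that $\prob{\mathbf{G}\notin\cQ_D^{\text{col}}}\leq\exp(-\mu^3 n^2)$, since the hierarchy $1/n\ll\mu\ll 1$ makes the $\mu^3 n^2$ term swamp the union-bound factor.

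The only technical point I expect to need care is the lower bound on $\delta_s$: one must verify that after discarding $c$-edges meeting $A\cup B$ and then those whose endpoints are forbidden neighbours of $a$ or $b$, at least $n/2$ ordered $c$-arcs remain. This is automatic from the $|D|$-regularity of $G$ together with $|D|=\mu n+1\ll n$, so no a priori pseudorandomness of $G$ is invoked; quasirandomness emerges purely from the switching-ratio bound, exactly as local edge-resilience did in Lemma~\ref{localedge}.
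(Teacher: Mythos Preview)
Your proposal is correct and follows essentially the same approach as the paper. The paper defines precisely this switching (calling the labelled subgraph a \emph{rotation system}, with $(a,b,v,w)$ in place of your $(a,b,y,x)$), obtains the same degree bounds $\Delta_{s-1}\leq|D|^3$ and $\delta_s\geq sn/2$, and concludes with the same geometric tail estimate and union bound.
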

\begin{proof}
Fix $A,B\subseteq V$ satisfying $|A|=|B|=\mu n+1$.
For any $G\in\cG_{D}^{\text{col}}$, we say that a subgraph $H\subseteq G$ together with a labelling of its vertices $V(H)=\{a,b,v,w\}$ is a \textit{rotation system} of~$G$ if $E(H)=\{ab, vw\}$, where $a\in A$, $b\in B$, $v,w\notin A\cup B$, $aw, bv\notin E(G)$, and $\phi_{G}(ab)=\phi_{G}(vw)$. 
We now define the \textit{rotate} switching operation.
Suppose $G\in\cG_{D}^{\text{col}}$ and $H\subseteq G$ is a rotation system.
Then we define~$\text{rot}_{H}(G)$ to be the coloured graph obtained from~$G$ by deleting the edges $ab,vw$, and adding the edges $aw, bv$, each with colour~$\phi_{G}(ab)$.
Writing $G'\coloneqq\text{rot}_{H}(G)$, notice that $G'\in\cG_{D}^{\text{col}}$ and $e_{G'}(A,B)=e_{G}(A,B)-1$.

Lemma~\ref{quasirandom} follows by analysing the degrees of auxiliary bipartite multigraphs~$B_{s}$ in a similar way as in the proof of Lemma~\ref{localedge}. We omit the details.\COMMENT{We define a partition~$\{M_{s}\}_{s=0}^{(\mu n+1)^{2}}$ of~$\cG_{D}^{\text{col}}$ by setting $M_{s}\coloneqq\{G\in\cG_{D}^{\text{col}}\colon e_{G}(A,B)=s\}$ for each $s\in[(\mu n+1)^{2}]_{0}$.\COMMENT{Depending on $A,B$ (say for example $A=B$), it might be that some of $M_{(\mu n+1)^{2}}, M_{(\mu n+1)^{2}-1},\dots$ are empty, but this would just mean the partition has some empty parts. Regardless, $(\mu n+1)^{2}$ is of course a universal upper bound for~$e_{G}(A,B)$.}
For each $s\in[(\mu n+1)^{2}]$ we define an auxiliary bipartite multigraph\COMMENT{We can have two edges between $G\in M_{s}$ and $G'\in M_{s-1}$ if say there is an~$H\subseteq G$ such that $\text{rot}_{H}(G)=G'$ and $a,b\in A\cap B$. In this case we find a second rotation system $H'$ such that $\text{rot}_{H'}(G)=G'$, by swapping the roles of $a$ and $b$, and swapping the roles of $v$ and $w$.}~$B_{s}$ with vertex bipartition~$(M_{s-1}, M_{s})$, where for each $G\in M_{s}$ and each rotation system $H\subseteq G$ we put an edge in~$B_{s}$ with endpoints~$G\in M_{s}$ and $\text{rot}_{H}(G)\in M_{s-1}$.
Define $\delta_{s}\coloneqq \min_{G\in M_{s}}d_{B_{s}}(G)$ and $\Delta_{s-1}\coloneqq \max_{G\in M_{s-1}}d_{B_{s}}(G)$.
Thus $|M_{s}|/|M_{s-1}|\leq \Delta_{s-1}/\delta_{s}$.
To bound~$\Delta_{s-1}$ from above, we fix $G'\in M_{s-1}$ and bound the number of pairs~$(G,H)$, where $G\in M_{s}$ and~$H$ is a rotation system of~$G$ such that $\text{rot}_{H}(G)=G'$.
There are at most $(\mu n+1)^{2}$ choices for which pair of vertices $a\in A, b\in B$ were caused to be a non-edge in~$G'$ by a rotation.
Then there are at most~$\mu n+1$ choices for a colour $d\in D$ such that the $d$-neighbour of~$b$ and the $d$-neighbour of~$a$ in~$G'$ could have played the roles of~$v$ and~$w$ respectively in a rotation yielding~$G'$.
We deduce that $\Delta_{s-1}\leq (\mu n+1)^{3}$.
%
%
%
To bound~$\delta_{s}$ from below, we fix $G\in M_{s}$ and find a lower bound for the number of rotation systems $H\subseteq G$.
Notice that there are~$s$ choices of an edge~$e\in E(G)$ with an endpoint in~$A$ and an endpoint in~$B$, and at least one choice of which of these endpoints will play the role of $a\in A$ in a rotation system~$H$, and which will play the role of $b\in B$.
Observe that $|A\cup B \cup N_{G}(a)\cup N_{G}(b)|\leq 4\mu n+4$, and therefore there are at least $n/2-4\mu n-4\geq n/4$ edges $f\in E_{\phi_{G}(e)}(G)$ such that both endpoints of~$f$ are in $V\setminus(A\cup B\cup N_{G}(a)\cup N_{G}(b))$, and there are two choices of which endpoint of~$f$ will play the role of~$v$, and which will play the role of~$w$.
Thus $\delta_{s}\geq sn/2$.
We conclude that if $s\geq 5\mu^{3}n^{2}$ and~$M_{s}$ is non-empty, then~$M_{s-1}$ is non-empty and $|M_{s}|/|M_{s-1}|\leq 2(\mu n+1)^{3}/sn \leq 1/2$.
%
Now, fix $s\geq 8\mu^{3}n^{2}$.
If~$M_{s}$ is empty, then\COMMENT{Again, we need to know $\cG_{D}^{\text{col}}$ is non-empty to know that we are not dividing by zero. But this follows from the usual existence results of $1$-factorizations of complete graphs. (Just restrict such a $1$-factorization to our colour set of interest)} $\prob{e_{\mathbf{G}}(A,B)=s}=0$.
If~$M_{s}$ is non-empty, then
\[
\prob{e_{\mathbf{G}}(A,B)=s}=\frac{|M_{s}|}{|\cG_{D}^{\text{col}}|}\leq \frac{|M_{s}|}{|M_{5\mu^{3}n^{2}}|}=\prod_{j=0}^{s-5\mu^{3}n^{2}-1}\frac{|M_{s-j}|}{|M_{s-j-1}|}\leq \left(\frac{1}{2}\right)^{s-5\mu^{3}n^{2}},
\]
and thus
\begin{eqnarray*}
\prob{e_{\mathbf{G}}(A,B)\geq 8\mu^{3}n^{2}} & \leq & \sum_{s=8\mu^{3}n^{2}}^{(\mu n+1)^{2}}\exp(-(s-5\mu^{3}n^{2})\ln 2)\leq (\mu n+1)^{2}\exp\left(-3\mu^{3}n^{2}\ln 2\right) \\ & \leq & \exp\left(-2\mu^{3}n^{2}\right).
\end{eqnarray*}
Finally, by a union bound over all choices of $A,B\subseteq V$, each of size $\mu n+1$, we conclude that $\prob{\mathbf{G}\notin\cQ_{D}^{\text{col}}}\leq\binom{n}{\mu n+1}^{2}\exp(-2\mu^{3}n^{2})\leq\exp(-\mu^{3}n^{2})$, which completes the proof of the lemma.}
\end{proof}
Next we will use a switching argument to find a large set of well-spread absorbing gadgets (cf.\ Definition~\ref{spread}).
For this, we consider slightly more restrictive substructures than the absorbing gadgets defined in Definition~\ref{def:absorbing-gadget}.
These additional restrictions (an extra edge~$f$ as well as an underlying partition~$\cP$ of the colours) give us better control over the switching process: they allow us to argue that we do not create more than one additional gadget per switch.
Let $D\subseteq [n-1]$, $c\in[n-1]\setminus D$, write $D^{*}\coloneqq D\cup\{c\}$, and let $G\in\cG_{D^{*}}^{\text{col}}$.
Suppose that~$\cP=\{D_{i}\}_{i=1}^{4}$ is an (ordered) partition of~$D$ into four subsets, and let~$x\in V$.
\begin{defin}
An $(x,c,\cP)$\textit{-gadget} in~$G$ is a subgraph~$J=A\cup\{f\}$ of~$G$ the following form (see Figure~\ref{fig:xcp}):
\begin{enumerate}[label=\upshape(\roman*)]
\item $A$ is an $(x,c)$-absorbing gadget in~$G$;
\item there is an edge $e_{1}\in\partial_{A}(x)$ such that $\phi(e_{1})\in D_{1}$, and the remaining edge $e_{2}\in\partial_{A}(x)$ satisfies $\phi(e_{2})\in D_{2}$;
\item the edge~$e_{3}$ of~$A$ which is not incident to~$x$ but shares an endvertex with~$e_{1}$ and an endvertex with~$e_{2}$ satisfies $\phi(e_{3})\in D_{3}$;
\item $f=xv$ is an edge of~$G$, where $v$ is the unique vertex of~$A$ such that $\phi(\partial_{A}(v))=\{c,\phi(e_{1})\}$;
\item $\phi(f)\in D_{4}$.
\end{enumerate}
\end{defin}
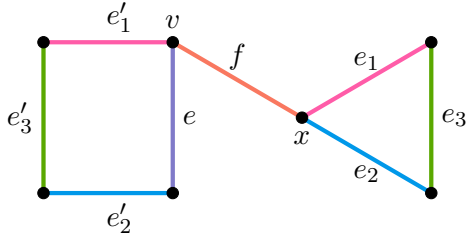
\begin{figure}
\centering
\begin{tikzpicture} [scale=0.5]
\draw [OwlRed,ultra thick] (0,4)--(3.4,4);
\draw [OwlBlue,ultra thick] (0,0)--(3.4,0);
\draw [OwlGreen,ultra thick] (0,0)--(0,4);
\draw [OwlViolet,ultra thick] (3.4,0)--(3.4,4);
\draw [OwlOrange,ultra thick] (3.4,4)--(6.8,2);
\draw [OwlRed,ultra thick] (6.8,2)--(10.2,4);
\draw [OwlBlue,ultra thick] (6.8,2)--(10.2,0);
\draw [OwlGreen,ultra thick] (10.2,0)--(10.2,4);
\draw [fill] (0,0) circle [radius=0.15];
\draw [fill] (0,4) circle [radius=0.15];
\draw [fill] (3.4,0) circle [radius=0.15];
\draw [fill] (3.4,4) circle [radius=0.15];
\draw [fill] (6.8,2) circle [radius=0.15];
\draw [fill] (10.2,0) circle [radius=0.15];
\draw [fill] (10.2,4) circle [radius=0.15];
\node [below] at (6.8,1.9) {$x$};
\node [left] at (0,2) {$e_{3}'$};
\node [below] at (2,0) {$e_{2}'$};
\node [above] at (2,4) {$e_{1}'$};
\node [right] at (3.4,2) {$e$};
\node [above] at (5.1,3) {$f$};
\node [above] at (8.5,3) {$e_{1}$};
\node [below] at (8.5,1) {$e_{2}$};
\node [right] at (10.2,2) {$e_{3}$};
\node [above] at (3.4,4.1) {$v$};
\end{tikzpicture}
\caption{An $(x,c,\cP)$-gadget. Here, $\phi(f)\in D_{4}$, $\phi(e)=c$, and $\phi(e_{i})=\phi(e_{i}')\in D_{i}$ for each $i\in[3]$.}
\label{fig:xcp}
\end{figure}
%
We now define some terminology that will be useful for analysing how many $(x,c,\cP)$-gadgets there are in a graph $G\in\cG_{D^{*}}^{\text{col}}$, and how well-spread these gadgets are.
Each of the terms we define here will have a dependence on the choice of the triple~$(x,c,\cP)$, but since this triple will always be clear from context, for presentation we omit the $(x,c,\cP)$-notation.
\begin{defin}
We say that an $(x,c,\cP)$-gadget~$J$ in~$G$ is \textit{distinguishable} in~$G$ if the edges $e_{3}, e_{3}'$ of~$J$ such that $\phi(e_{3})=\phi(e_{3}')\in D_{3}$ are such that there is no other $(x,c,\cP)$-gadget $J'\neq J$ in~$G$ such that $e_{3}\in E(J')$ or $e_{3}'\in E(J')$.
\end{defin}
We will aim only to count distinguishable $(x,c,\cP)$-gadgets, which will ensure the collection of gadgets we find is well-spread across the set of edges in $G\in\cG_{D^{*}}^{\text{col}}$ that can play the roles of~$e_{3}$,~$e_{3}'$.
We also need to ensure that the collection of gadgets we find is well-spread across the $c$-edges of~$G$.
\begin{defin}~
\begin{itemize}
\item For each $c$-edge~$e$ of~$G\in\cG_{D^{*}}^{\text{col}}$, we define the \textit{saturation} of~$e$ in~$G$, denoted~$\text{sat}_{G}(e)$, or simply~$\text{sat}(e)$ when~$G$ is clear from context, to be the number of distinguishable $(x,c,\cP)$-gadgets of~$G$ which contain~$e$.
We say that~$e$ is \textit{unsaturated} in~$G$ if $\text{sat}(e)\leq |D|-1$, \textit{saturated} if $\text{sat}(e)\geq |D|$, and \textit{supersaturated} if $\text{sat}(e)\geq |D|+6$.
We define~$\text{Sat}(G)$ to be the set of saturated $c$-edges of~$G$, and~$\text{Unsat}(G)\coloneqq E_{c}(G)\setminus\text{Sat}(G)$. 
\item We define the function\COMMENT{Actually, since (later)~$\cP$ is equitable,~$r(G)$ cannot exceed $n|D|/16$ as there are only $n|D|/8$ edges of~$G$ with colours in~$D_{3}$, and each distinguishable $(x,c,\cP)$-gadget uses precisely two of these, and no such edge is in more than one distinguishable $(x,c,\cP)$-gadget by definition.} $r\colon \cG_{D^{*}}^{\text{col}}\rightarrow [n|D|/2]_{0}$ by
\[
r(G)\coloneqq |D||\text{Sat}(G)|+\sum_{e\in\text{Unsat}(G)}\text{sat}(e).
\]
\end{itemize}
\end{defin}

In Lemma~\ref{masterswitch}, we will use switchings to show that~$r(G)$ is large (for some well-chosen~$\cP$) in almost all quasirandom $G\in\cG_{D^{*}}^{\text{col}}$.
In Lemma~\ref{justgadgets}, we use distinguishability, saturation, and the fact that any non-$x$ vertex in an $(x,c,\cP)$-gadget must be incident to an edge playing the role of either~$e_{3}$,~$e_{3}'$, or the $c$-edge, to show that~$r(G)$ being large means that there are many well-distributed $(x,c,\cP)$-gadgets in~$G$, and thus many well-spread $(x,c)$-absorbing gadgets.
We now define a relaxation of~$\cQ_{D^{*}}^{\text{col}}$, which will be a convenient formulation for ensuring that quasirandomness is maintained when we use switchings to find $(x,c,\cP)$-gadgets.
For each $s\in[n|D|/2]_{0}$, we write $A_{s}^{D^{*}}\coloneqq\{G\in\cG_{D^{*}}^{\text{col}}\colon r(G)=s\}$, and
\[
Q_{s}^{D^{*}}\coloneqq\{G\in\cG_{D^{*}}^{\text{col}}\colon e_{G}(A,B)< 8|D|^{3}/n + 6s\,\,\, \text{for}\,\text{all}\,A,B\subseteq V\,\text{such}\,\text{that}\,|A|=|B|=|D|\}.
\]
We also define $T_{s}^{D^{*}}\coloneqq A_{s}^{D^{*}}\cap Q_{s}^{D^{*}}$ and $\widetilde{\cQ}_{D^{*}}^{\text{col}}\coloneqq \bigcup_{s=0}^{n|D|/2}T_{s}^{D^{*}}$.
Notice that\COMMENT{In the definition of~$\cQ_{D^{*}}^{\text{col}}$, we look at $A$, $B$ of size $|D^{*}|=|D|+1$, while in the definition of~$\widetilde{\cQ}_{D^{*}}^{\text{col}}$, we look at $A$, $B$ of size~$|D|$, but this inclusion still holds.}
\begin{equation}\label{eq:qsubset}
\cQ_{D^{*}}^{\text{col}}\subseteq \widetilde{\cQ}_{D^{*}}^{\text{col}}.
\end{equation}

Finally, we discuss the switching operation that we will use in Lemma~\ref{masterswitch}.
\begin{defin}~
For any $G\in\cG_{D^{*}}^{\text{col}}$, we say that a subgraph $H\subseteq G$ together with a labelling of its vertices $V(H)=\{x,u_{1},u_{2},\dots,u_{14}\}$ is a \textit{twist system} of~$G$ if (see Figure~\ref{fig:ts}):
\begin{enumerate}[label=\upshape(\roman*)]
    \item $E(H)=\{u_{1}u_{2}, u_{3}u_{5}, u_{4}u_{6}, u_{5}u_{7}, u_{6}u_{8}, u_{7}u_{8}, u_{7}x, xu_{9}, xu_{10}, u_{9}u_{11}, u_{10}u_{12}, u_{13}u_{14}\}$;
    \item $\phi(u_{5}u_{7})=\phi(xu_{9})\in D_{1}$;
    \item $\phi(u_{6}u_{8})=\phi(xu_{10})\in D_{2}$;
    \item $\phi(u_{1}u_{2})=\phi(u_{3}u_{5})=\phi(u_{4}u_{6})=\phi(u_{9}u_{11})=\phi(u_{10}u_{12})=\phi(u_{13}u_{14})\in D_{3}$;
    \item $\phi(u_{7}x)\in D_{4}$;
    \item $\phi(u_{7}u_{8})=c$;
     \item $u_{1}u_{3}, u_{2}u_{4}, u_{5}u_{6}, u_{9}u_{10}, u_{11}u_{13}, u_{12}u_{14}\notin E(G)$.
\end{enumerate}
For a twist system~$H$ of~$G$, we define~$\text{twist}_{H}(G)$ to be the coloured graph obtained from~$G$ by deleting the edges $u_{1}u_{2}$, $u_{3}u_{5}$, $u_{4}u_{6}$, $u_{9}u_{11}$, $u_{10}u_{12}$, $u_{13}u_{14}$, and adding the edges $u_{1}u_{3}$, $u_{2}u_{4}$, $u_{5}u_{6}$, $u_{9}u_{10}$, $u_{11}u_{13}$, $u_{12}u_{14}$, each with colour~$\phi_{G}(u_{1}u_{2})$.\COMMENT{Writing $G'=\text{twist}_{H}(G)$, notice that since~$\phi_{G}$ is a $1$-factorization of~$G$, we have that the colouring~$\phi_{G'}$ of~$G'$ is a $1$-factorization of~$G'$.}
The $(x,c,\cP)$-gadget in~$\text{twist}_{H}(G)$ with edges $u_{5}u_{6}$, $u_{5}u_{7}$, $u_{6}u_{8}$, $u_{7}u_{8}$, $u_{7}x$, $xu_{9}$, $xu_{10}$, $u_{9}u_{10}$ is called the \textit{canonical} $(x,c,\cP)$\textit{-gadget} of the twist.
\end{defin}
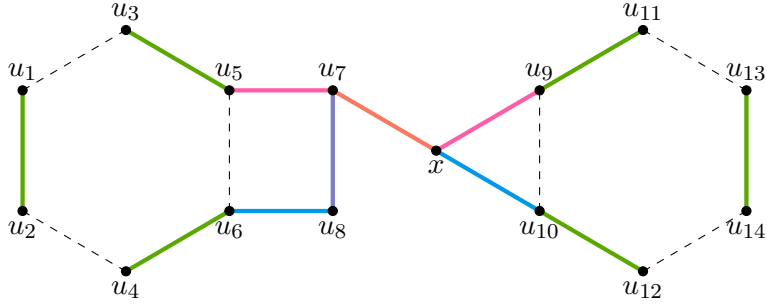
\begin{figure}
\centering
\begin{tikzpicture} [scale=0.4]
\draw [OwlGreen,ultra thick] (0,0)--(0,4);
\draw [OwlGreen,ultra thick] (3.4,-2)--(6.8,0);
\draw [OwlGreen,ultra thick] (3.4,6)--(6.8,4);
\draw [OwlRed,ultra thick] (6.8,4)--(10.2,4);
\draw [OwlBlue,ultra thick] (6.8,0)--(10.2,0);
\draw [OwlViolet,ultra thick] (10.2,0)--(10.2,4);
\draw [OwlOrange,ultra thick] (10.2,4)--(13.6,2);
\draw [OwlRed,ultra thick] (13.6,2)--(17,4);
\draw [OwlBlue,ultra thick] (13.6,2)--(17,0);
\draw [OwlGreen,ultra thick] (17,0)--(20.4,-2);
\draw [OwlGreen,ultra thick] (17,4)--(20.4,6);
\draw [OwlGreen,ultra thick] (23.8,0)--(23.8,4);
\draw [dashed] (0,0)--(3.4,-2);
\draw [dashed] (0,4)--(3.4,6);
\draw [dashed] (6.8,0)--(6.8,4);
\draw [dashed] (17,0)--(17,4);
\draw [dashed] (20.4,-2)--(23.8,0);
\draw [dashed] (20.4,6)--(23.8,4);
\draw [fill] (0,0) circle [radius=0.15];
\draw [fill] (0,4) circle [radius=0.15];
\draw [fill] (3.4,-2) circle [radius=0.15];
\draw [fill] (3.4,6) circle [radius=0.15];
\draw [fill] (6.8,0) circle [radius=0.15];
\draw [fill] (6.8,4) circle [radius=0.15];
\draw [fill] (10.2,0) circle [radius=0.15];
\draw [fill] (10.2,4) circle [radius=0.15];
\draw [fill] (13.6,2) circle [radius=0.15];
\draw [fill] (17,0) circle [radius=0.15];
\draw [fill] (17,4) circle [radius=0.15];
\draw [fill] (20.4,-2) circle [radius=0.15];
\draw [fill] (20.4,6) circle [radius=0.15];
\draw [fill] (23.8,0) circle [radius=0.15];
\draw [fill] (23.8,4) circle [radius=0.15];
\node [below] at (0,0) {$u_{2}$};
\node [above] at (0,4) {$u_{1}$};
\node [above] at (3.4,6) {$u_{3}$};
\node [below] at (3.4,-2) {$u_{4}$};
\node [above] at (6.8,4) {$u_{5}$};
\node [below] at (6.8,0) {$u_{6}$};
\node [below] at (10.2,0) {$u_{8}$};
\node [above] at (10.2,4) {$u_{7}$};
\node [below] at (13.6,2) {$x$};
\node [above] at (17,4) {$u_{9}$};
\node [below] at (17,0) {$u_{10}$};
\node [above] at (20.4,6) {$u_{11}$};
\node [below] at (20.4,-2) {$u_{12}$};
\node [above] at (23.8,4) {$u_{13}$};
\node [below] at (23.8,0) {$u_{14}$};
\end{tikzpicture}
\caption{A twist system of~$G$. Here, dashed edges represent non-edges of~$G$, and the colours of the edges satisfy (ii)--(vi) in the definition of twist system.}
\label{fig:ts}
\end{figure}
We simultaneously switch two edges into the positions~$u_{5}u_{6}$ and~$u_{9}u_{10}$ because it is much easier to find structures as in Figure~\ref{fig:ts} than it is to find such a structure with one of these edges already in place.
Moreover, the two `switching cycles' we use have three edges and three non-edges (rather than two of each, as in the rotation switching) essentially because of the extra freedom this gives us when choosing the edges~$u_{1}u_{2}$ and~$u_{13}u_{14}$.
This extra freedom allows us to ensure that in almost all twist systems, one avoids undesirable issues like inadvertently creating more than one new gadget when one performs the twist.

The proof of Lemma~\ref{masterswitch} proceeds with a similar strategy to those of Lemmas~\ref{localedge} and~\ref{quasirandom}, but it is much more challenging this time to show that graphs with low~$r(G)$-value admit many ways to switch to yield a graph $G'\in\cG_{D^{*}}^{\text{col}}$ satisfying $r(G')=r(G)+1$.
\begin{lemma}\label{masterswitch}
Suppose that $1/n\ll\mu\ll1$, and let $D\subseteq [n-1]$ have size $|D|=\mu n$.
Let $x\in V$, let $c\in [n-1]\setminus D$, and let $\cP=\{D_{i}\}_{i=1}^{4}$ be an equitable partition of~$D$.
Suppose that $\mathbf{G}\in\cG_{D\cup\{c\}}^{\text{col}}$ is chosen uniformly at random.
Then
\[
\prob{r(\mathbf{G})\leq\frac{\mu^{4}n^{2}}{2^{23}}\biggm| \mathbf{G}\in\widetilde{\cQ}_{D\cup\{c\}}^{\text{col}}}\leq\exp\left(-\frac{\mu^{4}n^{2}}{2^{24}}\right).
\]
\end{lemma}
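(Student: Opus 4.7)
The plan is to use the twist operation in a switching argument on the level sets $T_s := A_s^{D^*} \cap Q_s^{D^*}$ of the parameter $r$, where $D^* := D \cup \{c\}$, following the same architecture as Lemmas~\ref{localedge} and~\ref{quasirandom}. For each $s$ below some threshold $s^{**}$ slightly exceeding $s^* := \mu^4 n^2 / 2^{23}$, I would build the bipartite multigraph $B_s$ with parts $(T_s, T_{s+1})$ by including an edge $(G, \text{twist}_H(G))$ for every twist system $H$ of $G \in T_s$ with $\text{twist}_H(G) \in T_{s+1}$. Double counting then gives $|T_s|/|T_{s+1}| \leq \Delta_{s+1}/\delta_s$, and the goal is to show this ratio is bounded by some constant strictly less than $1$ for all $s \leq s^{**}$; telescoping across $s \leq s^*$ and dividing by $|\widetilde{\mathcal{Q}}_{D^*}^{\text{col}}| \geq |T_{s^{**}}|$ would then yield the claimed conditional probability bound.

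To bound $\delta_s$ from below I would count twist systems of a fixed $G \in T_s$ by choosing: the $D_4$-, $D_1$-, $D_2$-neighbours $u_7, u_9, u_{10}$ of $x$ (which then determines $u_5, u_6, u_8$); a colour $c' \in D_3$ (which determines $u_3, u_4, u_{11}, u_{12}$ as $c'$-neighbours of $u_5, u_6, u_9, u_{10}$); and two ordered pairs $(u_1, u_2), (u_{13}, u_{14})$ of endpoints of $c'$-edges of $G$ disjoint from the rest. From this raw count one must subtract bad candidates: those where one of the six required non-edges is actually an edge of $G$ (controlled by $G \in Q_s$, which bounds $e_G(A, B)$ on any two $|D|$-sets); those where $u_7 u_8$ is already saturated in $G$, so the canonical gadget fails to raise $r$ (controlled by $|\text{Sat}(G)| \leq s/|D|$); and those where the twist either destroys an existing distinguishable gadget by removing a $c'$-coloured $e_3$-edge, or creates a parasitic $(x, c, \mathcal{P})$-gadget beyond the canonical one (both controlled by combining the smallness of $s$ with quasirandomness). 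For $\Delta_{s+1}$, any pair $(G, H)$ producing a given $G' \in T_{s+1}$ is determined by the canonical $(x, c, \mathcal{P})$-gadget of the twist in $G'$ — which is automatically distinguishable there, since the freshly added edges $u_5 u_6, u_9 u_{10}$ are unique to it — together with four outside vertices $u_3, u_4, u_{11}, u_{12}$ (whose $c'$-neighbours in $G'$ then force $u_1, u_2, u_{13}, u_{14}$). The distinguishability constraint caps the number of candidate canonical gadgets in $G'$ via the $c'$-edges of $G'$ with $c' \in D_3$, and the remaining vertex choices contribute only a polynomial factor. Crucially, the condition $\text{twist}_H(G) \in Q_{s+1}$ is automatic: the twist modifies only twelve edges, so for any $|D|$-sets $A, B$ we have $e_{\text{twist}_H(G)}(A, B) \leq e_G(A, B) + 6 < 8|D|^3/n + 6(s+1)$ whenever $G \in Q_s$, so no extra exclusion is needed on this account.

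I expect the main obstacle to be forcing the ratio $\Delta_{s+1}/\delta_s$ to be genuinely small: specifically, simultaneously exploiting the distinguishability rigidity on the $G'$ side to suppress $\Delta_{s+1}$ and controlling, via the combination of quasirandomness and small $r(G) = s$, the contribution to the $\delta_s$ cleanup from twist systems that inadvertently destroy existing gadgets or spawn parasitic ones. The $14$-vertex, two-switching-cycle design of the twist — rather than the simpler length-two cycle of Lemma~\ref{quasirandom} — is what I expect to provide the extra degrees of freedom, carried by the edges $u_1 u_2$ and $u_{13} u_{14}$, that make these two accountings compatible while still preserving quasirandomness across the switch.
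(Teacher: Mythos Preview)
Your architecture is right and matches the paper: partition $\widetilde{\cQ}_{D^*}^{\text{col}}$ into the level sets $T_s$, build a bipartite switching graph $B_s$ on $(T_s,T_{s+1})$ using twists, and bound $|T_s|/|T_{s+1}|$ by $\Delta_{s+1}/\delta_s$. Your plan for $\delta_s$ is also essentially what the paper does, including the exclusions for destroyed gadgets and parasitic new gadgets. The gap is in your $\Delta_{s+1}$ bound.

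You bound the number of candidate canonical gadgets in $G'\in T_{s+1}$ by distinguishability, i.e.\ via the $D_3$-edges of $G'$. That gives only $O(n|D|)$ candidates (there are $|D_3|\cdot n/2$ edges with colour in $D_3$, each in at most one distinguishable gadget), and combined with the $\Theta(n^4)$ choices for $u_3,u_4,u_{11},u_{12}$ you get $\Delta_{s+1}=O(n^5|D|)$. Against $\delta_s=\Theta(|D|^4n^2)$ this yields a ratio of order $n^3/|D|^3=\mu^{-3}\gg 1$, so the telescoping fails completely. (Bounding instead by the total number of $(x,c,\cP)$-gadgets in $G'$, which is at most $(|D|/4)^3$, still gives a ratio of order $n^2/|D|\gg 1$.) Relatedly, your claim that the canonical gadget is ``automatically distinguishable'' in $G'$ because $u_5u_6$ and $u_9u_{10}$ are freshly added is false: nothing a priori prevents another $(x,c,\cP)$-gadget in $G'$ from using one of these two new $D_3$-edges.

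What the paper does differently is to restrict the edges of $B_s$ further: an edge is recorded only when the twist additionally satisfies $\text{sat}_{G'}(u_7u_8)=\text{sat}_G(u_7u_8)+1\leq |D|$ and the canonical gadget is the \emph{only} new distinguishable $(x,c,\cP)$-gadget on this $c$-edge. Under this condition the canonical gadget is distinguishable in $G'$ and sits on a $c$-edge $e$ with $\text{sat}_{G'}(e)\leq |D|$, so it is literally one of the gadgets counted by $\sum_{e\in\text{Unsat}(G')}\text{sat}_{G'}(e)\leq r(G')=s+1$. Hence there are at most $s+1$ choices for the canonical gadget, giving $\Delta_{s+1}\leq 24n^4(s+1)$, and for $s\leq |D|^4/2^{22}n^2$ the ratio drops below $1/2$. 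The price is that the $\delta_s$ side must now also verify this extra condition for most twist candidates; this is exactly the content of the paper's Claims~\ref{p2claim1}--\ref{p2claim3} (property (P3)), which establish that for most $\lambda$ the only new $(x,c,\cP)$-gadget created is the canonical one, hence it is distinguishable and raises the saturation of $u_7u_8$ by exactly one. Your list of exclusions for $\delta_s$ already anticipates this work, but you need to impose the matching restriction on $B_s$ in order to harvest it on the $\Delta_{s+1}$ side.
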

\begin{proof}
Write $D^{*}\coloneqq D\cup\{c\}$.
Consider the partition~$\{T_{s}^{D^{*}}\}_{s=0}^{nk/2}$ of~$\widetilde{\cQ}_{D^{*}}^{\text{col}}$, where $k\coloneqq |D|$.
For each $s\in[nk/2-1]_{0}$, we define an auxiliary bipartite multigraph\COMMENT{Suppose~$H$ is a twist system with colours~$i\in D_{i}$ and vertices as we usually label them. It is possible there is another twist system~$H'$ using colours $1'\neq 1$, $2'\neq 2$, $3$, $4'\neq 4$, with vertices $\{x,v_{1},\dots, v_{14}\}$ (here making the natural numerical correspondence of vertex roles in this order) such that $v_{1}=u_{6}$, $v_{2}=u_{4}$, $v_{3}=u_{5}$, $v_{4}=u_{2}$, $v_{5}=u_{3}$, $v_{6}=u_{1}$, $v_{9}=u_{13}$, $v_{10}=u_{11}$, $v_{11}=u_{14}$, $v_{12}=u_{9}$, $v_{13}=u_{12}$, $v_{14}=u_{10}$, and using an entirely different $c$-edge. These two twists actually operate on the same $3$-edges and non-edges, so that $\text{twist}_{H}(G)=\text{twist}_{H'}(G)$. It is possible that the saturation of both $c$-edges increases by precisely one (and is below the saturation threshold), while some other $c$-edge loses saturation one, so that $G'\in T_{s+1}$, so that indeed~$B_{s}$ may be a multigraph. It would be possible to rule this out in the definition of adjacency in~$B_{s}$, but I tried to make the definition no more convoluted than it already needed to be - plus this added complication will not bother us.}~$B_{s}$ with vertex bipartition $(T_{s}^{D^{*}},T_{s+1}^{D^{*}})$ and an edge between~$G$ and~$\text{twist}_{H}(G)$ whenever:
\begin{enumerate}
    \item[(a)] $G\in T_{s}^{D^{*}}$;
    \item[(b)] $H$ is a twist system in~$G$ for which~$G'\coloneqq\text{twist}_{H}(G)\in T_{s+1}^{D^{*}}$  and~$G'$ satisfies $\text{sat}_{G'}(e)=\text{sat}_{G}(e)+1\leq k$ for the $c$-edge $e=u_{7}u_{8}$ of~$H$, with the canonical $(x,c,\cP)$-gadget of the twist~$G'$ being the only additional distinguishable $(x,c,\cP)$-gadget using this $c$-edge.
\end{enumerate}
Define $\delta_{s}\coloneqq \min_{G\in T_{s}^{D^{*}}}d_{B_{s}}(G)$ and $\Delta_{s+1}\coloneqq \max_{G\in T_{s+1}^{D^{*}}}d_{B_{s}}(G)$.
Thus $|T_{s}^{D^{*}}|/|T_{s+1}^{D^{*}}|\leq \Delta_{s+1}/\delta_{s}$.
To bound~$\Delta_{s+1}$ from above, we fix $G'\in T_{s+1}^{D^{*}}$ and bound the number of pairs~$(G,H)$, where $G\in T_{s}^{D^{*}}$ and~$H$ is a twist system of~$G$ such that $\text{twist}_{H}(G)=G'$ and~(b) holds.
Firstly, note that
\[
\sum_{\substack{e\in E_{c}(G') \\ \text{sat}_{G'}(e)\leq k}}\text{sat}_{G'}(e) \leq r(G')= s+1.
\]
Thus, it follows from condition (b) that there are at most~$s+1$ choices for the canonical $(x,c,\cP)$-gadget of a twist yielding~$G'$ for which we record an edge in~$B_{s}$.
Fixing this $(x,c,\cP)$-gadget fixes the vertices of~$V$ which played the roles of $x$, $u_{5}$, $u_{6}$, $\dots$, $u_{10}$ in a twist yielding~$G'$.
To determine all possible sets of vertices playing the roles of $u_{1}$, $u_{2}$, $u_{3}$, $u_{4}$, $u_{11}$, $u_{12}$, $u_{13}$, $u_{14}$ (thus determining~$H$ and~$G$ such that $\text{twist}_{H}(G)=G'$), it suffices to find all choices of four edges of~$G'$ with colour~$\phi_{G'}(u_{5}u_{6})$ satisfying the necessary non-adjacency conditions.
There are at most~$(n/2)^{4}$ choices for these four edges, and at most~$4!\cdot2^{4}$ choices for which endpoints of these edges play which role.\COMMENT{We need to choose which of the four edges is playing which role - there are~$4!$ choices for this assignment. Then we need to choose which endpoints are playing which role for each edge. There are~$2^{4}$ choices for this assignment.}
We deduce that $\Delta_{s+1}\leq 24n^{4}(s+1)$.

Suppose that $s\leq k^{4}/2^{22}n^{2}$.
To bound~$\delta_{s}$ from below, we fix $G\in T_{s}^{D^{*}}$ and find a lower bound for the number of twist systems $H\subseteq G$ for which we record an edge between~$G$ and~$\text{twist}_{H}(G)$ in~$B_{s}$.
To do this, we will show that there are many choices for a set of four colours and two edges, such that each of these sets uniquely identifies a twist system in~$G$ for which we record an edge in~$B_{s}$.
Note that since $s\leq k^{4}/2^{22}n^{2}$ and $G\in Q_{s}^{D^{*}}$, we have\COMMENT{$e_{G}(A,B)<8|D|^{3}/n+6s\leq 8k^{3}/n+6k^{4}/2^{22}n^{2}\leq 10k^{3}/n$.}
\begin{equation}\label{eq:quas}
e_{G}(A,B)\leq 10k^{3}/n\hspace{5mm}\text{for}\hspace{1mm}\text{all}\hspace{1mm}\text{sets}\hspace{2mm}A,B\subseteq V\hspace{2mm}\text{of}\hspace{1mm}\text{sizes}\hspace{2mm}|A|=|B|=k.
\end{equation}
We begin by finding subsets of~$D_{3}$ and~$D_{4}$ with some useful properties in~$G$.
\begin{claim}\label{d3good}
There is a set $D_{3}^{\text{good}}\subseteq D_{3}$ of size $|D_{3}^{\text{good}}|\geq k/8$ such that for all $d\in D_{3}^{\text{good}}$ we have
\begin{enumerate}[label=\upshape(\roman*)]
\item $|E_{d}(N_{D_{1}}(x), N_{D_{2}}(x))|\leq 200k^{2}/n$;
\item there are at most~$64k^{3}/n^{2}$ $d$-edges~$e$ in~$G$ with the property that~$e$ lies in some distinguishable $(x,c,\cP)$-gadget in~$G$ whose $c$-edge is not supersaturated.
\end{enumerate}
\end{claim}
\claimproof{}
Observe that $|N_{D_{1}}(x)|=|N_{D_{2}}(x)|=k/4$.
Then, by (arbitrarily extending $N_{D_{1}}(x)$, $N_{D_{2}}(x)$ and) applying~(\ref{eq:quas}), we see that $e(N_{D_{1}}(x),N_{D_{2}}(x))\leq 10k^{3}/n$.\COMMENT{Suppose now that for at least~$k/16$ colours $d\in D_{3}$, there are at least~$200k^{2}/n$ edges of~$G$ in the set~$E_{d}(N_{D_{1}}(x),N_{D_{2}}(x))$.
Then $e(N_{D_{1}}(x),N_{D_{2}}(x))\geq 200k^{3}/16n$, a contradiction.}
Thus there is a set $\hat{D}_{3}\subseteq D_{3}$ of size $|\hat{D}_{3}|\geq 3k/16$ such that each $d\in\hat{D}_{3}$ satisfies~(i).
Next, notice that, since $r(G)=s$, there are at most~$s/k\leq k^{3}/2^{22}n^{2}$ saturated $c$-edges in~$G$\COMMENT{$r(G)\coloneqq k|\text{Sat}(G)|+\sum_{e\in\text{Unsat}(G)}\text{sat}(e)=s$ implies that $k|\text{Sat}(G)|\leq s$.}.
Suppose for a contradiction that at least~$k/16$ colours $d\in D_{3}$ are such that there are at least~$64k^{3}/n^{2}$ $d$-edges~$e$ in~$G$ with the property that~$e$ lies in some distinguishable $(x,c,\cP)$-gadget in~$G$ whose $c$-edge is not supersaturated.
Then, by considering the contribution of these distinguishable $(x,c,\cP)$-gadgets to~$r(G)$, and accounting for saturated $c$-edges, we obtain that\COMMENT{The total number of these distinguishable gadgets is at least $(k/16) \cdot 32k^{3}/n^{2}$, where we have divided $64k^{3}/n^{2}$ by~$2$ because each distinguishable gadget can contain up to~$2$ of these $d$-edges. Then it may be that some of these gadgets do not `contribute' to~$r(G)$ because their~$c$-edge is saturated. Since this occurs only if the saturation of such a $c$-edge is in $\{k+1, k+2, k+3, k+4, k+5\}$ and there are at most $k^{3}/2^{22}n^{2}$ saturated $c$-edges, subtracting $5k^{3}/2^{22}n^{2}$ accounts for this.} $r(G)\geq (k/16)\cdot 32k^{3}/n^{2} - 5k^{3}/2^{22}n^{2} >s$, a contradiction.
Thus there is a set $\Tilde{D}_{3}\subseteq D_{3}$ of size $|\Tilde{D}_{3}|\geq 3k/16$ such that each $d\in\Tilde{D}_{3}$ satisfies~(ii).
We define $D_{3}^{\text{good}}\coloneqq \hat{D}_{3}\cap\Tilde{D}_{3}$, and note that $|D_{3}^{\text{good}}|\geq k/8$.
\endclaimproof{}
We also define $D_{4}^{\text{good}}\subseteq D_{4}$ to be the set of colours $d_{4}\in D_{4}$ such that the $c$-edge~$e$ incident to the $d_{4}$-neighbour of~$x$ in~$G$ satisfies $\text{sat}(e)\leq k-1$.
Observe that $|D_{4}^{\text{good}}|\geq k/8$, since otherwise there are at least~$k/16$ saturated $c$-edges\COMMENT{There are at least $k/8$ colours $d_{4}\in D_{4}$ which are not good, and any fixed $c$-edge may be incident to the $d_{4}$-neighbour of~$x$ for up to two of these bad $d_{4}$.} in~$G$, whence $r(G)\geq k^{2}/16>s$, a contradiction.

We now show that there are many choices of a vector~$(d_{1},d_{2},d_{3},d_{4},\overrightarrow{f_{1}},\overrightarrow{f_{2}})$ where each $d_{i}\in D_{i}$ and each $\overrightarrow{f_{j}}$ is an edge $f_{j}\in E_{d_{3}}(G)$ together with an identification of which endpoints will play which role, such that each vector uniquely gives rise to a candidate of a twist system $H\subseteq G$.
We can begin to construct such a candidate by choosing $d_{4}\in D_{4}^{\text{good}}$ and letting~$u_{7}$ denote the $d_{4}$-neighbour of~$x$ in~$G$, and letting~$u_{8}$ denote the $c$-neighbour of~$u_{7}$.
Secondly, we choose $d_{1}\in D_{1}$, avoiding the colour of the edge~$xu_{8}$ (if it is present), and let~$u_{5}$ denote the $d_{1}$-neighbour of~$u_{7}$, and let~$u_{9}$ denote the $d_{1}$-neighbour of~$x$.
Next, we choose $d_{2}\in D_{2}$, avoiding the colours of the edges $u_{5}u_{8}$, $u_{5}x$, $u_{8}x$, $u_{8}u_{9}$ in~$G$ (if they are present), and let~$u_{6}$ denote the $d_{2}$-neighbour of~$u_{8}$, and let~$u_{10}$ denote the $d_{2}$-neighbour of~$x$.
Then, we choose $d_{3}\in D_{3}^{\text{good}}$, avoiding the colours of all edges in~$E_{G}(\{x,u_{5},u_{6},\dots,u_{10}\})$.
We let~$u_{3},u_{4},u_{11},u_{12}$ denote the $d_{3}$-neighbours of~$u_{5},u_{6},u_{9},u_{10}$, respectively.
Finally, we choose two distinct edges $f_{1},f_{2}\in E_{d_{3}}(G)$ which are not incident to any vertex in $\{x, u_{3}, u_{4},\dots,u_{12}\}$, and we choose which endpoint of~$f_{1}$ will play the role of~$u_{1}$ and which will play the role of~$u_{2}$, and choose which endpoint of~$f_{2}$ will play the role of~$u_{13}$ and which will play the role of~$u_{14}$.
Let~$\Lambda$ denote the set of all possible vectors~$(d_{1},d_{2},d_{3},d_{4},\overrightarrow{f_{1}},\overrightarrow{f_{2}})$ that can be chosen in this way, so that~$|\Lambda|\geq\frac{k}{8}\cdot\frac{3k}{16}\cdot\frac{k}{8}\cdot\frac{k}{16}\cdot\frac{n}{4}\cdot2\cdot\frac{n}{4}\cdot2=3k^{4}n^{2}/2^{16}$.
Further, let~$H(\lambda)\subseteq G$ denote the labelled subgraph of~$G$ corresponding to~$\lambda\in\Lambda$ in the above way.
If~$H(\lambda)$ is a twist system, then we sometimes say that we `twist on~$\lambda$' to mean that we perform the twist operation to obtain~$\text{twist}_{H(\lambda)}(G)$ from~$G$.

It is clear that~$H(\lambda)$ is unique for all vectors $\lambda\in\Lambda$, and that~$H(\lambda)$ satisfies conditions (i)--(vi) of the definition of a twist system.
However, some~$H(\lambda)$ may fail to satisfy~(vii), and some may fail to satisfy condition (b) in the definition of adjacency in~$B_{s}$.
We now show that only for a small proportion of~$\lambda\in\Lambda$ do either of these problems occur.
We begin by ensuring that most $\lambda\in\Lambda$ give rise to twist systems.
\begin{claim}
There is a subset $\Lambda_{1}\subseteq\Lambda$ such that $|\Lambda_{1}|\geq9|\Lambda|/10$ and~$H(\lambda)$ is a twist system for all $\lambda\in\Lambda_{1}$.
\end{claim}
\claimproof{}
Fix any choice of $d_{3}\in D_{3}^{\text{good}}$, $d_{4}\in D_{4}^{\text{good}}$ and $\overrightarrow{f_{1}}, \overrightarrow{f_{2}}$ appearing concurrently\COMMENT{So that we can assume the subgraph these colours determine is not `degenerate'} in some $\lambda\in\Lambda$, and note that there are at most~$(k/4)^{2}\cdot n^{2}$ such choices.
Here and throughout the remainder of the proof of Lemma~\ref{masterswitch}, we write~$u_{7}$ for the $d_{4}$-neighbour of~$x$, we write~$u_{8}$ for the $c$-neighbour of~$u_{7}$, and so on, where the choice of $d_{1}$, $d_{2}$, $d_{3}$, $d_{4}$, $\overrightarrow{f_{1}}$, $\overrightarrow{f_{2}}$ will always be clear from context.
Note that fixing $d_{3}$, $d_{4}$ only fixes the vertices $x$, $u_{7}$, $u_{8}$.
There are at most $10k^{3}/n$ pairs $(d_{1},d_{2})$ with each $d_{i}\in D_{i}$ such that there is an edge $u_{5}u_{6}\in E(G)$, since otherwise $e(N_{D_{1}}(u_{7}), N_{D_{2}}(u_{8}))>10k^{3}/n$, contradicting~(\ref{eq:quas})\COMMENT{for any extension of $N_{D_{1}}(u_{7})$, $N_{D_{2}}(u_{8})$ to sets of size~$k$. This technicality has already been mentioned once so now I reduce it to a comment. And from now on I even omit it from comments.}.
Similarly, there are at most $10k^{3}/n$ pairs $(d_{1}, d_{2})$ with each $d_{i}\in D_{i}$ such that $u_{9}u_{10}$ is an edge of~$G$\COMMENT{Otherwise, $e(N_{D_{1}}(x), N_{D_{2}}(x))>10k^{3}/n$, contradicting~(\ref{eq:quas}).}.
We deduce that there are at most $(20k^{3}/n)\cdot(k/4)^{2}\cdot n^{2}=5k^{5}n/4$ vectors $\lambda\in\Lambda$ for which~$H(\lambda)$ is such that either $u_{5}u_{6}$ or $u_{9}u_{10}$ is an edge of~$G$.
Now fix instead $d_{1}$, $d_{2}$, $d_{3}$, $d_{4}$, $\overrightarrow{f_{2}}$.
Note that\COMMENT{$|D^{*}|=k+1$.} $|N_{G}(u_{3})\cup N_{G}(u_{4})|\leq 2k+2$ so that there are at most $4k+4$ choices of $\overrightarrow{f_{1}}$ such that either $u_{1}u_{3}$ or $u_{2}u_{4}$ is an edge of~$G$.
Analysing the pairs $u_{11}u_{13}$ and $u_{12}u_{14}$ similarly\COMMENT{Fix $d_{1}$, $d_{2}$, $d_{3}$, $d_{4}$, $\overrightarrow{f_{1}}$. Note that $|N(u_{11})\cup N(u_{12})|\leq 2k+2$ so that there are at most $4k+4$ choices of $\overrightarrow{f_{2}}$ such that either $u_{11}u_{13}$ or $u_{12}u_{14}$ is an edge of~$G$.}, we deduce that altogether, there are at most $5k^{5}n/4+2((k/4)^{4}\cdot n\cdot(4k+4)) \leq 2k^{5}n\leq|\Lambda|/10$ vectors $\lambda\in\Lambda$ for which~$H(\lambda)$ fails to be a twist system.
\endclaimproof{}

We now show that only for a small proportion of $\lambda\in\Lambda_{1}$ does~$H(\lambda)$ fail to give rise to an edge in~$B_{s}$, by showing that most~$H(\lambda)$ satisfy the following properties:
\begin{itemize}
    \item [(P1)]$\text{twist}_{H(\lambda)}(G)\in Q_{s+1}^{D^{*}}$;  \item [(P2)] Deletion of the six $d_{3}$-edges in~$H(\lambda)$ does not decrease~$r(G)$;
    \item [(P3)] The canonical $(x,c,\cP)$-gadget of the twist~$\text{twist}_{H(\lambda)}(G)$ is distinguishable, and it is the only $(x,c,\cP)$-gadget which is in~$\text{twist}_{H(\lambda)}(G)$ but not in~$G$.\COMMENT{Recall that $\text{sat}_{G}(e)\leq k-1$ for the $c$-edge~$e$ of any~$H(\lambda)$ (since we choose $d_{4}\in D_{4}^{\text{good}}$) so this ensures~$r(G)$ increases, and by not more than one.}
\end{itemize}
Firstly, since~$G\in Q_{s}^{D^{*}}$ and we only create six new edges in any twist, it is clear that~$H(\lambda)$ satisfies~(P1) for all $\lambda\in\Lambda_{1}$.
\begin{claim}\label{rgnodec}
There is a subset $\Lambda_{2}\subseteq \Lambda_{1}$ such that $|\Lambda_{2}|\geq9|\Lambda_{1}|/10$ and~$H(\lambda)$ satisfies property~(P2) for all $\lambda\in\Lambda_{2}$.
\end{claim}
\claimproof{}
Fix $d_{1}\in D_{1}$, $d_{3}\in D_{3}^{\text{good}}$, $d_{4}\in D_{4}^{\text{good}}$, $\overrightarrow{f_{1}}$, $\overrightarrow{f_{2}}$ appearing concurrently in some $\lambda\in\Lambda_{1}$.
Let~$F_{d_{3}}(G)\subseteq E_{d_{3}}(G)$ be the set of $d_{3}$-edges~$e$ in~$G$ with the property that~$e$ is in some distinguishable $(x,c,\cP)$-gadget in~$G$ whose $c$-edge is not supersaturated.
Recall that $|F_{d_{3}}(G)|\leq 64k^{3}/n^{2}$ since $d_{3}\in D_{3}^{\text{good}}$.
Observe then that there are at most $128k^{3}/n^{2}$ colours $d_{2}\in D_{2}$ such that~$u_{10}$ is the endpoint of an edge in~$F_{d_{3}}(G)$.
Thus for all but at most $(k/4)^{3}\cdot n^{2}\cdot 128k^{3}/n^{2}=2k^{6}$ choices of $\lambda=(d_{1},d_{2},d_{3},d_{4}, \overrightarrow{f_{1}}, \overrightarrow{f_{2}})\in\Lambda_{1}$, the edge~$u_{10}u_{12}$ is not in~$F_{d_{3}}(G)$.
Now fix instead $d_{1}$, $d_{2}$, $d_{3}$, $d_{4}$, $\overrightarrow{f_{2}}$.
Then since $d_{3}\in D_{3}^{\text{good}}$, there are at most $128k^{3}/n^{2}$ choices of~$\overrightarrow{f_{1}}$ such that~$f_{1}\in F_{d_{3}}(G)$, so that for all but at most $(k/4)^{4}\cdot n\cdot 128k^{3}/n^{2}=k^{7}/2n$ vectors $\lambda \in\Lambda_{1}$,~$H(\lambda)$ is such that $f_{1}\notin F_{d_{3}}(G)$.
Similar analyses\COMMENT{Fix $d_{2}\in D_{2}$, $d_{3}\in D_{3}^{\text{good}}$, $d_{4}\in D_{4}^{\text{good}}$, $\overrightarrow{f_{1}}$, $\overrightarrow{f_{2}}$. At most $128k^{3}/n^{2}$ colours $d_{1}\in D_{1}$ are such that~$u_{9}$ is the endpoint of an edge in~$F_{d_{3}}(G)$. Thus for all but at most $(k/4)^{3}\cdot n^{2}\cdot 128k^{3}/n^{2}=2k^{6}$ choices of $\lambda\in\Lambda$, the edge~$u_{9}u_{11}$ is not in~$F_{d_{3}}(G)$. \newline Fix $d_{2}\in D_{2}$, $d_{3}\in D_{3}^{\text{good}}$, $d_{4}\in D_{4}^{\text{good}}$, $\overrightarrow{f_{1}}$, $\overrightarrow{f_{2}}$. In particular, we have fixed which vertex of~$V$ plays the role of~$u_{7}$. At most $128k^{3}/n^{2}$ colours $d_{1}\in D_{1}$ are such that~$u_{5}$ is the endpoint of an edge in~$F_{d_{3}}(G)$. Thus for all but at most $(k/4)^{3}\cdot n^{2}\cdot 128k^{3}/n^{2}=2k^{6}$ choices of $\lambda\in\Lambda$, the edge~$u_{3}u_{5}$ is not in~$F_{d_{3}}(G)$. \newline Fix $d_{1}\in D_{2}$, $d_{3}\in D_{3}^{\text{good}}$, $d_{4}\in D_{4}^{\text{good}}$, $\overrightarrow{f_{1}}$, $\overrightarrow{f_{2}}$. In particular, we have fixed which vertex of~$V$ plays the role of~$u_{8}$. At most $128k^{3}/n^{2}$ colours $d_{2}\in D_{2}$ are such that~$u_{6}$ is the endpoint of an edge in~$F_{d_{3}}(G)$. Thus for all but at most $(k/4)^{3}\cdot n^{2}\cdot 128k^{3}/n^{2}=2k^{6}$ choices of $\lambda\in\Lambda$, the edge~$u_{4}u_{6}$ is not in~$F_{d_{3}}(G)$. \newline Fix $d_{1}$, $d_{2}$, $d_{3}$, $d_{4}$, $\overrightarrow{f_{1}}$.
Then since $d_{3}\in D_{3}^{\text{good}}$, there are at most $128k^{3}/n^{2}$ choices of~$\overrightarrow{f_{2}}$ such that~$f_{2}\in F_{d_{3}}(G)$, so that for all but at most $(k/4)^{4}\cdot n\cdot 128k^{3}/n^{2}=k^{7}/2n$ vectors $\lambda \in\Lambda$, the corresponding labelled subgraph of~$G$ is such that $f_{2}\notin F_{d_{3}}(G)$.} show that there are at most $8k^{6}+k^{7}/n\leq 9k^{6}\leq|\Lambda_{1}|/10$ choices of $\lambda\in\Lambda_{1}$ such that $\{u_{1}u_{2},u_{3}u_{5},u_{4}u_{6},u_{9}u_{11},u_{10}u_{12},u_{13}u_{14}\}\cap F_{d_{3}}(G)\neq\emptyset$.
By definition of~$F_{d_{3}}(G)$ and supersaturation of a $c$-edge, we deduce that for all remaining $\lambda\in \Lambda_{1}$,~$H(\lambda)$ is such that deleting the edges $u_{1}u_{2}$, $u_{3}u_{5}$, $u_{4}u_{6}$, $u_{9}u_{11}$, $u_{10}u_{12}$, $u_{13}u_{14}$ does not decrease~$r(G)$.\COMMENT{For any of these six edges, say~$e$, either~$e$ is not in a distinguishable $(x,c,\cP)$-gadget, whence deleting~$e$ does not decrease~$r(G)$, or the unique (by distinguishability) distinguishable $(x,c,\cP)$-gadget that~$e$ is in is such that the $c$-edge of this gadget is supersaturated.
Then deletion of~$e$ decreases the saturation of this $c$-edge by precisely one, which does not decrease~$r(G)$. The worst case is that all six edges $u_{1}u_{2}$, $u_{3}u_{5}$, $u_{4}u_{6}$, $u_{9}u_{11}$, $u_{10}u_{12}$, $u_{13}u_{14}$ are each in separate distinguishable $(x,c,\cP)$-gadgets each using the same supersaturated $c$-edge of~$G$. But by the definition of supersaturation, deletion of these six edges still does not decrease~$r(G)$.}
\endclaimproof{}

When we perform a twist operation on a twist system~$H$ in~$G$, since the only new edges we add have some colour in~$D_{3}$, we have that for any new distinguishable $(x,c,\cP)$-gadget~$J$ we create in the twist, one of the new edges $u_{1}u_{3}$, $u_{2}u_{4}$, $u_{5}u_{6}$, $u_{9}u_{10}$, $u_{11}u_{13}$, $u_{12}u_{14}$ of the twist is playing the role of either $v_{5}v_{6}$ or $v_{9}v_{10}$ in~$J$. 
(Here and throughout the rest of the proof, we imagine completed $(x,c,\cP)$-gadgets~$J$ as having vertices labelled $x, v_{5}, \dots, v_{10}$, where the role of~$v_{i}$ corresponds to the role of~$u_{i}$ in Figure~\ref{fig:ts}.)
We now show that for most $\lambda\in\Lambda_{2}$,~$H(\lambda)$ satisfies property~(P3).
This is the most delicate part of the argument, and we break it into three more claims.

\begin{claim}\label{p2claim1}
There is a subset $\Lambda_{3}\subseteq\Lambda_{2}$ such that $|\Lambda_{3}|\geq9|\Lambda_{2}|/10$ and all $\lambda\in\Lambda_{3}$ are such that if~$J$ is an $(x,c,\cP)$-gadget that is in~$\text{twist}_{H(\lambda)}(G)$ but not in~$G$, then the pair~$u_{9}u_{10}$ of~$H(\lambda)$ plays the role of~$v_{9}v_{10}$.
\end{claim}
\claimproof{}
Since the only edges added by any twist operation all have colour in~$D_{3}$, it suffices to show that at most~$|\Lambda_{2}|/10$ vectors $\lambda\in\Lambda_{2}$ are such that twisting on~$\lambda$ creates an $(x,c,\cP)$-gadget~$J$ for which either
\begin{enumerate}[label=\upshape(\roman*)]
\item one of the pairs $u_{1}u_{3}$, $u_{2}u_{4}$, $u_{5}u_{6}$, $u_{11}u_{13}$, $u_{12}u_{14}$ of~$H(\lambda)$ plays the role of~$v_{9}v_{10}$, or
\item the edge~$v_{9}v_{10}$ of~$J$ is present in~$G$.
\end{enumerate}
To address~(i), we show that $u_{1},u_{2},u_{5},u_{11},u_{12}\notin N_{G}(x)$ for all but at most~$|\Lambda_{2}|/20$ vectors $\lambda\in\Lambda_{2}$.
Note firstly that at most $10k^{3}/n$ pairs $(d_{1},d_{4})$ where $d_{1}\in D_{1}$, $d_{4}\in D_{4}^{\text{good}}$ are such that $u_{5}\in N_{G}(x)$, since otherwise $e(N_{D_{4}}(x), N_{G}(x))>10k^{3}/n$, contradicting~(\ref{eq:quas}).
Thus, at most $(k/4)^{2}\cdot n^{2}\cdot 10k^{3}/n=5k^{5}n/8$ choices of $\lambda\in\Lambda_{2}$ are such that $u_{5}\in N_{G}(x)$.
Now fix $d_{1}$, $d_{2}$, $d_{3}$, $d_{4}$, $\overrightarrow{f_{2}}$ appearing concurrently in some $\lambda\in\Lambda_{2}$.
Notice that there are at most $2k+2$ choices of~$\overrightarrow{f_{1}}$ such that~$f_{1}$ has at least one endpoint in~$N_{G}(x)$.
Analysing~$\overrightarrow{f_{2}}$ similarly\COMMENT{Fix $d_{1}$, $d_{2}$, $d_{3}$, $d_{4}$, $\overrightarrow{f_{1}}$. Notice that there are at most $2k+2$ choices of~$\overrightarrow{f_{2}}$ such that~$f_{2}$ has either endpoint in~$N_{G}(x)$. Now $5k^{5}n/8 +2((k/4)^{4}\cdot n\cdot(2k+2)\leq k^{5}n$.}, we deduce that there are at most $5k^{5}n/8+2(k/4)^{4}(2k+2)n\leq|\Lambda_{2}|/20$ choices of $\lambda\in\Lambda_{2}$ such that at least one of $u_{1}$, $u_{2}$, $u_{5}$, $u_{13}$, $u_{14}$ lies in~$N_{G}(x)$.

Turning now to~(ii), we show that at most~$|\Lambda_{2}|/20$ vectors~$\lambda\in\Lambda_{2}$ are such that twisting on~$\lambda$ creates any $(x,c,\cP)$-gadgets~$J$ for which the edge~$v_{9}v_{10}$ of~$J$ is present in~$G$ (and thus one of the pairs $u_{1}u_{3},u_{2}u_{4},u_{5}u_{6},u_{9}u_{10},u_{11}u_{13},u_{12}u_{14}$ of~$H(\lambda)$ plays the role of~$v_{5}v_{6}$).
To do this, we use some of the properties of~$D_{3}^{\text{good}}$.
Fix $d_{2}\in D_{2}$, $d_{3}\in D_{3}^{\text{good}}$, $d_{4}\in D_{4}^{\text{good}}$, $\overrightarrow{f_{1}}$, $\overrightarrow{f_{2}}$ appearing concurrently in some $\lambda \in \Lambda_{3}$.
Note that since $d_{3}\in D_{3}^{\text{good}}$, there are at most $200k^{2}/n$ pairs $(d_{1}', d_{2}')$ where $d_{1}'\in D_{1}$, $d_{2}'\in D_{2}$, such that there is a~$K_{3}$ in~$G$ with vertices $x$, $w_{1}$, $w_{2}$, where~$w_{i}$ is the $d_{i}'$-neighbour of~$x$ for $i\in\{1,2\}$, and the edge $w_{1}w_{2}$ is coloured~$d_{3}$.
Let the set of these pairs $(d_{1}',d_{2}')$ be denoted~$L(d_{3})$.
For each pair $\ell=(d_{1}',d_{2}')\in L(d_{3})$, let~$z_{\ell}^{1}$ be the end of the $d_{1}'cd_{2}'$-walk starting at~$u_{10}$.
Similarly, let~$z_{\ell}^{2}$ denote the end of the $d_{2}'cd_{1}'$-walk starting at~$u_{10}$.
Define~$M\coloneqq\bigcup_{\ell\in L(d_{3})}\{z_{\ell}^{1}, z_{\ell}^{2}\}$, so that $|M|\leq 400k^{2}/n$.
Since there are at most~$400k^{2}/n$ choices of $d_{1}\in D_{1}$ for which we obtain $u_{9}\in M$, we deduce that for all but at most $(k/4)^{3}\cdot n^{2}\cdot 400k^{2}/n=25k^{5}n/4$ vectors $\lambda\in\Lambda_{2}$,~$H(\lambda)$ is such that adding the edge $u_{9}u_{10}$ in colour~$d_{3}$ does not create a new $(x,c,\cP)$-gadget~$J$ where $u_{9}u_{10}$ plays the role of $v_{5}v_{6}$ in~$J$ and the edge playing the role of $v_{9}v_{10}$ in~$J$ is already present in~$G$ before the twist\COMMENT{Only when we choose $d_{1}\in D_{1}$ such that $u_{9}\in M$ is it true that adding the edge $u_{9}u_{10}$ with colour $d_{3}$ completes a $C_{4}$ with colours $d_{1}', c, d_{2}', d_{3}$ for which there is already a $K_{3}$ with colours $d_{1}', d_{2}', d_{3}$ using the $d_{1}'$ and $d_{2}'$ edges at $x$. In fact only those structures amongst these for which there is a $d_{4}'$-edge (for some $d_{4}'\in D_{4}$) in the necessary place actually create one of the new gadgets we are trying to rule out, but this only helps us.}.
One can observe similarly\COMMENT{Now, fix $d_{2}$, $d_{3}$, $d_{4}$, $\overrightarrow{f_{1}}$, $\overrightarrow{f_{2}}$.
Since $d_{3}\in D_{3}^{\text{good}}$, observe that $|L(d_{3})|\leq 200k^{2}/n$.
Now for each $\ell=(d_{1}', d_{2}')\in L(d_{3})$, let $z_{\ell}^{1}$ be the end of the $d_{1}'cd_{2}'$-walk starting at~$u_{6}$, and let $z_{\ell}^{2}$ be the end of the $d_{2}'cd_{1}'$-walk starting at~$u_{6}$.
Define~$M\coloneqq\bigcup_{\ell\in L(d_{3})}\{z_{\ell}^{1}, z_{\ell}^{2}\}$, and notice that $|M|\leq 400k^{2}/n$.
Since there are at most~$400k^{2}/n$ choices of $d_{1}\in D_{1}$ for which we obtain $u_{5}\in M$, we deduce that for all but at most $(k/4)^{3}\cdot n^{2}\cdot 400k^{2}/n=25k^{5}n/4$ vectors $\lambda\in\Lambda_{2}$,~$H(\lambda)$ is such that adding the edge $u_{5}u_{6}$ in colour~$d_{3}$ does not create a new $(x,c,\cP)$-gadget~$J$ where $u_{5}u_{6}$ plays the role of $v_{5}v_{6}$ in~$J$ and the edge playing the role of $v_{9}v_{10}$ in~$J$ is already present in~$G$ before the twist.} that for all but at most~$25k^{5}n/4$ vectors~$\lambda\in\Lambda$,~$H(\lambda)$ is such that adding the edge $u_{5}u_{6}$ in colour~$d_{3}$ does not create a new $(x,c,\cP)$-gadget~$J$ where $u_{5}u_{6}$ plays the role of $v_{5}v_{6}$ in~$J$ and the edge playing the role of $v_{9}v_{10}$ in~$J$ is already present in~$G$ before the twist.

Now fix instead $d_{1}$, $d_{2}$, $d_{3}$, $d_{4}$, $\overrightarrow{f_{2}}$ appearing concurrently in some $\lambda\in\Lambda_{2}$.
For each $\ell=(d_{1}', d_{2}')\in L(d_{3})$, let $y_{\ell}^{1}$ be the end of the $d_{1}'cd_{2}'$-walk starting at~$u_{4}$, let $y_{\ell}^{2}$ be the end of the $d_{2}'cd_{1}'$-walk starting at~$u_{4}$, let $z_{\ell}^{1}$ be the end of the $d_{1}'cd_{2}'$-walk starting at~$u_{3}$, and let $z_{\ell}^{2}$ be the end of the $d_{2}'cd_{1}'$-walk starting at~$u_{3}$.
Define $M\coloneqq\bigcup_{\ell\in L(d_{3})}\{y_{\ell}^{1}, y_{\ell}^{2}, z_{\ell}^{1}, z_{\ell}^{2}\}$, and notice that $|M|\leq 800k^{2}/n$.
We deduce that there are at most $1600k^{2}/n$ choices of~$\overrightarrow{f_{1}}$ such that~$f_{1}$ has an endpoint in~$M$, and that for all remaining choices of~$\overrightarrow{f_{1}}$, twisting on $\lambda=(d_{1},d_{2},d_{3},d_{4},\overrightarrow{f_{1}},\overrightarrow{f_{2}})$ cannot create a new $(x,c,\cP)$-gadget~$J$ where the new $d_{3}$-edges $u_{1}u_{3}$ or $u_{2}u_{4}$ play the role of $v_{5}v_{6}$ in~$J$ and the edge~$v_{9}v_{10}$ of~$J$ is present in~$G$. 
Analysing~$\overrightarrow{f_{2}}$ similarly\COMMENT{Fix $d_{1}$, $d_{2}$, $d_{3}$, $d_{4}$, $\overrightarrow{f_{1}}$. Since $d_{3}\in D_{3}^{\text{good}}$, observe that $|L(d_{3})|\leq 200k^{2}/n$. Now for each $\ell=(d_{1}', d_{2}')\in L(d_{3})$, let $y_{\ell}^{1}$ be the end of the $d_{1}'cd_{2}'$-walk starting at~$u_{11}$, let $y_{\ell}^{2}$ be the end of the $d_{2}'cd_{1}'$-walk starting at~$u_{11}$, let $z_{\ell}^{1}$ be the end of the $d_{1}'cd_{2}'$-walk starting at~$u_{12}$, and let $z_{\ell}^{2}$ be the end of the $d_{2}'cd_{1}'$-walk starting at~$u_{12}$. Define $M\coloneqq\bigcup_{\ell\in L(d_{3})}\{y_{\ell}^{1}, y_{\ell}^{2}, z_{\ell}^{1}, z_{\ell}^{2}\}$, and notice that $|M|\leq 800k^{2}/n$. We deduce that there are at most $1600k^{2}/n$ choices of~$\overrightarrow{f_{2}}$ such that~$f_{2}$ has an endpoint in~$M$, and that for all remaining choices of~$\overrightarrow{f_{2}}$, twisting on $\lambda=(d_{1},d_{2},d_{3},d_{4},\overrightarrow{f_{1}},\overrightarrow{f_{2}})$ cannot create a new $(x,c,\cP)$-gadget~$J$ where the new $d_{3}$-edges $u_{11}u_{13}$ or $u_{12}u_{14}$ play the role of $v_{5}v_{6}$ in~$J$ and the edge~$v_{9}v_{10}$ of~$J$ is present in~$G$.}, we conclude that for all but at most\COMMENT{$25k^{5}n/4$ for $u_{9}u_{10}$, $25k^{5}n/4$ for $u_{5}u_{6}$, and $2((k/4)^{4}\cdot n\cdot 2000k^{2}/n)$ for $u_{1}u_{3}$, $u_{2}u_{4}$, $u_{11}u_{13}$, $u_{12}u_{14}$.} $13k^{5}n\leq|\Lambda_{2}|/20$ choices of $\lambda\in\Lambda_{2}$, twisting on~$\lambda$ cannot create a new $(x,c,\cP)$-gadget~$J$ for which the edge~$v_{9}v_{10}$ of~$J$ is present in~$G$.
\endclaimproof{}
\begin{claim}
There is a subset $\Lambda_{4}\subseteq\Lambda_{3}$ such that $|\Lambda_{4}|\geq9|\Lambda_{3}|/10$ and all $\lambda\in\Lambda_{4}$ are such that if~$J$ is an $(x,c,\cP)$-gadget that is in~$\text{twist}_{H(\lambda)}(G)$ but not in~$G$, then the pair~$u_{5}u_{6}$ of~$H(\lambda)$ plays the role of~$v_{5}v_{6}$.
\end{claim}
\claimproof{}
By Claim~\ref{p2claim1}, it will suffice to show that at most~$|\Lambda_{3}|/10$ vectors $\lambda\in\Lambda_{3}$ are such that twisting on~$\lambda$ creates an $(x,c,\cP)$-gadget~$J$ for which either
\begin{enumerate}[label=\upshape(\roman*)]
\item one of the pairs~$u_{1}u_{3},u_{2}u_{4},u_{11}u_{13},u_{12}u_{14}$ of~$H(\lambda)$ plays the role of~$v_{5}v_{6}$, and~$u_{9}u_{10}$ plays the role of~$v_{9}v_{10}$, or
\item the edge~$v_{5}v_{6}$ of~$J$ is present in~$G$ and the pair~$u_{9}u_{10}$ of~$H(\lambda)$ plays the role of~$v_{9}v_{10}$.
\end{enumerate}
To address~(i), fix $d_{1}$, $d_{2}$, $d_{3}$, $d_{4}$, $\overrightarrow{f_{2}}$ appearing concurrently in some $\lambda\in\Lambda_{3}$.
Let~$a_{1}$ be the end of the $d_{1}cd_{2}$-walk starting at~$u_{4}$, let~$a_{2}$ be the end of the $d_{2}cd_{1}$-walk starting at~$u_{4}$, let~$b_{1}$ be the end of the $d_{1}cd_{2}$-walk starting at~$u_{3}$, and let~$b_{2}$ be the end of the $d_{2}cd_{1}$-walk starting at~$u_{3}$.
Since there are at most~$8$ choices of~$\overrightarrow{f_{1}}$ with an endpoint in~$\{a_{1}, a_{2}, b_{1}, b_{2}\}$, we deduce that for all remaining choices of~$\overrightarrow{f_{1}}$, twisting on $\lambda=(d_{1},d_{2},d_{3},d_{4},\overrightarrow{f_{1}},\overrightarrow{f_{2}})$ cannot create an $(x,c,\cP)$-gadget~$J$ for which the new $d_{3}$-edges~$u_{1}u_{3}$ or~$u_{2}u_{4}$ play the role of~$v_{5}v_{6}$ in~$J$ and~$u_{9}u_{10}$ plays the role of~$v_{9}v_{10}$.
Analysing~$\overrightarrow{f_{2}}$ similarly, we conclude that we must discard at most~$k^{4}n/16\leq|\Lambda_{3}|/20$ vectors $\lambda\in\Lambda_{3}$ to account for~(i).

Turning now to~(ii), write $D_{4}=\{d_{4}^{1}, d_{4}^{2},\dots, d_{4}^{k/4}\}$.
For each $d_{4}^{i}\in D_{4}$, let~$y_{i}$ be the $d_{4}^{i}$-neighbour of~$x$, let~$z_{i}$ be the $c$-neighbour of~$y_{i}$, define $R_{i}\coloneqq N_{D_{1}}(y_{i})$ and $S_{i}\coloneqq N_{D_{2}}(z_{i})$.
Notice that $\sum_{i=1}^{k/4}e(R_{i}, S_{i})\leq 5k^{4}/2n$, since otherwise we obtain a contradiction to~(\ref{eq:quas}) for some pair $(R_{i}, S_{i})$.
We deduce that there are at most $5k^{4}/2n$ triples $(d_{1},d_{2},d_{3})$ with each $d_{i}\in D_{i}$ for which adding the edge $u_{9}u_{10}$ in colour~$d_{3}$ creates an $(x,c,\cP)$-gadget~$J$ for which $u_{9}u_{10}$ plays the role of $v_{9}v_{10}$ in~$J$ and the edge playing the role of $v_{5}v_{6}$ is already present in~$G$, whence at most $(5k^{4}/2n)\cdot (k/4)\cdot n^{2}=5k^{5}n/8\leq|\Lambda_{3}|/20$ choices of $\lambda\in\Lambda_{3}$ are such that twisting on~$\lambda$ creates an $(x,c,\cP)$-gadget of this type.
\endclaimproof{}
\begin{claim}\label{p2claim3}
There is a subset $\Lambda_{5}\subseteq\Lambda_{4}$ such that $|\Lambda_{5}|\geq9|\Lambda_{4}|/10$ and all $\lambda\in\Lambda_{5}$ are such that if~$J$ is an $(x,c,\cP)$-gadget that is in~$\text{twist}_{H(\lambda)}(G)$ but not in~$G$ and the pairs~$u_{5}u_{6},u_{9}u_{10}$ of~$H(\lambda)$ play the roles of the edges~$v_{5}v_{6},v_{9}v_{10}$ of~$J$ respectively, then~$J$ is the canonical $(x,c,\cP)$-gadget of the twist.
\end{claim}
\claimproof{}
Fix $d_{3}$, $d_{4}$, $\overrightarrow{f_{1}}$, $\overrightarrow{f_{2}}$ appearing concurrently in some $\lambda\in\Lambda_{4}$.
By~(\ref{eq:quas}), we have that $e(N_{D_{2}}(u_{8}), N_{D_{4}}(x))\leq 10k^{3}/n$.
We deduce that there are at most $10k^{3}/n$ choices of the pair $(d_{1},d_{2})$ such that the $d_{1}$-neighbour of~$u_{6}$ lies in~$N_{D_{4}}(x)$, whence\COMMENT{The situation we were trying to avoid was the one in which~$u_{5}$ is the endpoint of the $d_{1}cd_{2}$-walk starting at~$u_{6}$, such that the $d_{1}$-neighbour of~$u_{6}$ is also a $d_{4}'$-neighbour of~$x$ for some $d_{4}'\in D_{4}$. In this situation, when we twist, we create an unwanted second $(x,c,\cP)$-gadget where $u_{5}u_{6}$ and $u_{9}u_{10}$ play the roles of $v_{5}v_{6}$ and $v_{9}v_{10}$ respectively.} for all but at most $5k^{5}n/8\leq|\Lambda_{4}|/10$ choices of $\lambda\in\Lambda_{4}$, the canonical $(x,c,\cP)$-gadget of the twist is the only new $(x,c,\cP)$-gadget for which $u_{5}u_{6},u_{9}u_{10}$ play the roles of $v_{5}v_{6},v_{9}v_{10}$ respectively. 
\endclaimproof{}
Note that, by Claims~\ref{p2claim1}--\ref{p2claim3}, the canonical $(x,c,\cP)$-gadget of a twist on $\lambda\in\Lambda_{5}$ is clearly distinguishable in~$\text{twist}_{H(\lambda)}(G)$ since its edges~$v_{5}v_{6}$ and~$v_{9}v_{10}$ with colours in~$D_{3}$ were added by the twist and performing this twist creates no other $(x,c,\cP)$-gadgets.
Thus Claims~\ref{p2claim1}--\ref{p2claim3} imply that~$H(\lambda)$ satisfies~(P3) for all $\lambda\in\Lambda_{5}$.
Recalling that~$\text{sat}_{G}(e)\leq k-1$ for the $c$-edge~$e$ of~$H(\lambda)$ for all $\lambda\in\Lambda$ and also using Claim~\ref{rgnodec}, we now deduce that $r(\text{twist}_{H(\lambda)}(G))=r(G)+1$, and thus $\text{twist}_{H(\lambda)}(G)\in A_{s+1}^{D^{*}}$, for all $\lambda\in\Lambda_{5}$.
Since~$H(\lambda)$ satisfies~(P1) for all $\lambda\in\Lambda_{1}$, we deduce that $\text{twist}_{H(\lambda)}(G)\in T_{s+1}^{D^{*}}$ for all $\lambda\in\Lambda_{5}$, and that $\delta_{s}\geq|\Lambda_{5}|\geq|\Lambda|/2\geq 3k^{4}n^{2}/2^{17}$.
We conclude that
if $s\leq k^{4}/2^{22}n^{2}$ and~$T_{s}^{D^{*}}$ is non-empty, then~$T_{s+1}^{D^{*}}$ is non-empty and $|T_{s}^{D^{*}}|/|T_{s+1}^{D^{*}}|\leq 2^{17}\cdot 24n^{4}(s+1)/3k^{4}n^{2}\leq 1/2$.
Now, fix $s\leq \mu^{4}n^{2}/2^{23}$.
If~$T_{s}^{D^{*}}$ is empty, then\COMMENT{Note $\prob{r(\mathbf{G})=s\mid\mathbf{G}\in\widetilde{\cQ}_{D^{*}}^{\text{col}}}=|T_{s}^{D^{*}}|/|\widetilde{\cQ}_{D^{*}}^{\text{col}}|$, so we need to know $\widetilde{\cQ}_{D^{*}}^{\text{col}}$ is non-empty to know that we are not dividing by zero. But this follows from the fact that $\cQ_{D^{*}}^{\text{col}}$ is non-empty, which in turn follows from the usual existence results of $1$-factorizations of complete graphs, restricting such $1$-factorizations to see that $\cG_{D^{*}}^{\text{col}}$ is non-empty, and then applying Lemma~\ref{quasirandom}.} $\prob{r(\mathbf{G})=s\mid\mathbf{G}\in\widetilde{\cQ}_{D^{*}}^{\text{col}}}=0$.
If~$T_{s}^{D^{*}}$ is non-empty, then
\[
\prob{r(\mathbf{G})=s\mid\mathbf{G}\in\widetilde{\cQ}_{D^{*}}^{\text{col}}} = \frac{|T_{s}^{D^{*}}|}{|\widetilde{\cQ}_{D^{*}}^{\text{col}}|} \leq  \frac{|T_{s}^{D^{*}}|}{|T_{k^{4}/2^{22}n^{2}}^{D^{*}}|} =\prod_{j=s}^{k^{4}/2^{22}n^{2}-1}\frac{|T_{j}^{D^{*}}|}{|T_{j+1}^{D^{*}}|} \leq (1/2)^{k^{4}/2^{22}n^{2}-s},
\]
and thus\COMMENT{Note that the middle expression is bounded above by $\left(\frac{\mu^{4}n^{2}}{2^{23}}+1\right)\exp\left(-\frac{\mu^{4}n^{2}}{2^{23}}\ln 2\right)$.},
\[
\prob{r(\mathbf{G})\leq \mu^{4}n^{2}/2^{23}\mid\mathbf{G}\in\widetilde{\cQ}_{D^{*}}^{\text{col}}} \leq \sum_{s=0}^{\mu^{4}n^{2}/2^{23}}\exp(-(k^{4}/2^{22}n^{2}-s)\ln 2)\leq \exp\left(-\frac{\mu^{4}n^{2}}{2^{24}}\right),
\]
which completes the proof of the lemma.
\end{proof}
Next, we show that in order to find many well-spread $(x,c)$-absorbing gadgets in~$G\in\cG_{D\cup\{c\}}^{\text{col}}$, it suffices to show that~$r(G)$ is large for some equitable partition~$\cP$ of~$D$ into four parts.
(Recall that `well-spread' was defined in Definition~\ref{spread}.)
\begin{lemma}\label{justgadgets}
Suppose that $1/n\ll\mu$, and let $D\subseteq[n-1]$ be such that $|D|\leq\mu n$.
Let $x\in V$, let $c\in [n-1]\setminus D$, and let $\cP=\{D_{i}\}_{i=1}^{4}$ be an equitable partition of~$D$.
Then for any integer $t\geq 0$ and any $G\in\cG_{D\cup\{c\}}^{\text{col}}$, if $r(G)\geq t$, then~$G$ contains a $5\mu n/4$-well-spread collection of~$t$ distinct $(x,c)$-absorbing gadgets.
\end{lemma}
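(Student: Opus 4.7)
Given $G$ with $r(G)\ge t$, I will construct the required collection by bucketing over $c$-edges of $G$.  For each $c$-edge $e$, let $\cB_e$ consist of exactly $|D|$ distinguishable $(x,c,\cP)$-gadgets of $G$ with $c$-edge $e$ if $e\in\text{Sat}(G)$ (a choice is possible since $\text{sat}(e)\ge|D|$), and all $\text{sat}(e)$ such gadgets otherwise.  Since each $(x,c,\cP)$-gadget has a unique $c$-edge, the buckets are pairwise disjoint, and the total number of chosen $(x,c,\cP)$-gadgets equals $r(G)\ge t$.  The map sending each gadget $J=A\cup\{f\}$ to its underlying absorbing gadget $A$ is injective (the extra edge $f$ is forced by $A$ together with $\cP$), so passing to underlying absorbing gadgets yields at least $t$ distinct ones, and I take any $t$-subset as $\cA_{(x,c)}$.

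To check that $\cA_{(x,c)}$ is $5\mu n/4$-well-spread, I rely on two facts.  By construction, any single $c$-edge is used by at most $|D|$ gadgets of $\cA_{(x,c)}$.  By distinguishability, any $D_3$-coloured edge of $G$ lies in at most one distinguishable $(x,c,\cP)$-gadget, hence in at most one gadget of $\cA_{(x,c)}$.  For the vertex bound, any non-$x$ vertex of an $(x,c,\cP)$-gadget either lies on the $c$-edge of the gadget (as one of the two vertices of $Q$ covered by $e$) or is incident to one of the two $D_3$-edges $e_3,e_3'$ of the gadget.  The first case contributes at most $|D|$ gadgets per vertex, and the second at most $|D_3|=|D|/4$ (each vertex has exactly $|D|/4$ edges of colours in $D_3$ incident to it in $G$, each in at most one distinguishable gadget), summing to $5|D|/4\le 5\mu n/4$.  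The edge bound is similar: $c$-edges give $\le|D|$, $D_3$-edges give $\le 1$, and $D_1$- or $D_2$-coloured edges are bounded by the vertex bound applied at either of their endpoints; edges coloured in $D_4$ are never used by an absorbing gadget.

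The step that needs slightly more care is the colour bound for $d\in D_3$, where raw distinguishability only limits the number of gadgets using $d$ to $n/4$, which is too weak when $\mu\ll 1$.  The refinement is that in any gadget with $\phi(e_3)=d$, the triangle edge is $e_3=y_1y_2$ with $\phi(xy_1)\in D_1$, so $y_1\in N_{D_1}(x)$; once $d$ is fixed, the vertex $y_2$ is forced to be the $d$-neighbour of $y_1$, so $e_3$ ranges over at most $|N_{D_1}(x)|=|D|/4$ edges, and distinguishability yields at most one gadget per such $e_3$.  This gives at most $|D|/4\le 5\mu n/4$ gadgets using $d$.  The cases $d\in D_1\cup D_2$ reduce to the vertex bound applied at the unique $d$-neighbour of $x$, and colours in $D_4$ never appear on an edge of an absorbing gadget, so no verification is needed there.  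Combining these three bounds gives a $5\mu n/4$-well-spread collection of $t$ distinct $(x,c)$-absorbing gadgets.
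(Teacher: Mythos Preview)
Your proof is correct and follows essentially the same approach as the paper's own argument: both extract a collection of distinguishable $(x,c,\cP)$-gadgets directly from the definition of $r(G)$, verify the well-spread bounds via the key structural observation that every non-$x$ vertex of a gadget is incident to either its $c$-edge or one of its two $D_3$-edges, and then strip off the $D_4$-edge $f$ injectively to obtain $(x,c)$-absorbing gadgets. Your $D_3$-colour bound (via $|N_{D_1}(x)|=|D|/4$) is in fact slightly sharper than the paper's (which uses that $e_3$ has both endpoints in $N_G(x)$, giving $\le\mu n/2$), but the underlying idea is the same.
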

\begin{proof}
Let $G\in\cG_{D\cup\{c\}}^{\text{col}}$, let $t\geq 0$ be an integer, and suppose that $r(G)\geq t$.
Then, since $|D|\leq\mu n$ and by definition of~$r$, we deduce that there is a collection~$\cA_{(x,c,\cP)}$ of~$t$ distinct $(x,c,\cP)$-gadgets satisfying the following conditions:
\begin{enumerate}[label=\upshape(\roman*)]
    \item Each edge of~$G$ with colour in~$D_{3}$ is contained in at most one $(x,c,\cP)$-gadget $J\in\cA_{(x,c,\cP)}$;
    \item Each $c$-edge of~$G$ is contained in at most $\mu n$ $(x,c,\cP)$-gadgets $J\in\cA_{(x,c,\cP)}$.
\end{enumerate}
Fix $v\in V\setminus\{x\}$.
Let~$e$ be the $c$-edge of~$G$ incident to~$v$ and for each $d\in D_{3}$ let~$f_{d}$ be the $d$-edge of~$G$ incident to~$v$.
Then by conditions~(i) and~(ii) there are at most $5\mu n/4$ $(x,c,\cP)$-gadgets $J\in\cA_{(x,c,\cP)}$ containing any of the edges in $\{e\}\cup\bigcup_{d\in D_{3}}\{f_{d}\}$.
Note that if~$v$ is contained in some $J\in\cA_{(x,c,\cP)}$, then~$v$ is incident to either the $c$-edge in~$J$, or to one of the edges in~$J$ with colour in~$D_{3}$.
We thus conclude\COMMENT{Of course, it may be that there are gadgets $J\in\cA_{(x,c,\cP)}$ such that~$J$ contains one of the edges coloured with some colour in~$D_{1}$ (say) incident to~$v$. But by the above observation,~$J$ must use one of the edges $\{e\}\cup\bigcup_{d\in D_{3}}\{f_{d}\}$, so we have already counted~$J$.} that~$v$ is contained in at most $5\mu n/4$ $(x,c,\cP)$-gadgets $J\in\cA_{(x,c,\cP)}$.
It immediately follows that no edge of~$G$ is contained in more than~$5\mu n/4$ $(x,c,\cP)$-gadgets $J\in\cA_{(x,c,\cP)}$.\COMMENT{Indeed the number of times an edge is used can be bounded above by the number of times one of its endpoints is used.}

For each $d\in D_{1}\cup D_{2}\cup D_{4}$, there are at most~$5\mu n/4$ $J\in\cA_{(x,c,\cP)}$ with $d\in\phi(J)$ since each such~$J$ must contain the $d$-neighbour of~$x$ in~$G$.
For each $d\in D_{3}$, there are at most~$\mu n/2$ $d$-edges~$f$ in~$G$ such that both endpoints of~$f$ are neighbours of~$x$.
Any $J\in\cA_{(x,c,\cP)}$ for which $d\in\phi(J)$ must contain one of these edges~$f$.
Thus by~(i), there are at most $\mu n/2$ $J\in\cA_{(x,c,\cP)}$ such that $d\in\phi(J)$.

Finally, define a function~$g$ on~$\cA_{(x,c,\cP)}$ by setting $g(J)\coloneqq J-f$, where~$f$ is the unique edge of~$J$ with colour in~$D_{4}$, for each $J\in\cA_{(x,c,\cP)}$.
Then it is clear that~$g$ is injective\COMMENT{Suppose $g(J)=g(J')$. Then~$J$ and $J'$ can both be obtained by adding back in the edge of~$G$ with endpoints~$x$ and~$u$, where $u$ is the unique vertex of~$g(J)=g(J')$ having $\phi_{G}(\partial_{J}(u))=\{c, d_{1}\}$, for some $d_{1}\in D_{1}$. So $J=J'$.} and that~$g(J)$ is an $(x,c)$-absorbing gadget, for each $J\in\cA_{(x,c,\cP)}$.
Thus,~$g(\cA_{(x,c,\cP)})$ is a $5\mu n/4$-well-spread collection of~$t$ distinct $(x,c)$-absorbing gadgets in~$G$, as required.
\end{proof}
\subsection{Weighting factor}
We now state two results on the number of $1$-factorizations in dense $d$-regular graphs~$G$, where a $1$-factorization of~$G$ consists of an ordered set of~$d$ perfect matchings in~$G$.
We will use these results to find a `weighting factor' (see Corollary~\ref{wf}), which we will use to compare the probabilities of particular events occurring in different probability spaces.
For any graph~$G$, let~$M(G)$ denote the number of distinct $1$-factorizations of~$G$, and for any $n,d\in\bN$, let $\cG_{d}^{n}$ denote the set of $d$-regular graphs on~$n$ vertices.
Firstly, the Kahn-Lov\'{a}sz Theorem (see e.g.~\cite{AF08}) states that a graph with degree sequence $r_{1},\dots,r_{n}$ has at most~$\prod_{i=1}^{n}(r_{i}!)^{1/2r_{i}}$ perfect matchings.
In particular, an $n$-vertex $d$-regular graph has at most~$(d!)^{n/2d}$ perfect matchings.
To determine an upper bound for the number of $1$-factorizations of a $d$-regular graph~$G$, one can simply apply the Kahn-Lov\'{a}sz Theorem repeatedly to obtain $M(G)\leq \prod_{r=1}^{d}(r!)^{n/2r}$.
Using Stirling's approximation, we obtain the following result.\COMMENT{Using Stirling's approximation as $r!=\exp(r\ln r -r +O(\ln r))$ and the well-known asymptotic expansion of the Harmonic number~$H_{d}=\frac{1}{1}+\frac{1}{2}+\dots+\frac{1}{d}=\ln d +\gamma+O(1/d)$, where $\gamma\leq 0.58$ is the Euler-Mascheroni constant, and the Taylor series expansion $e^{x}=1+x+x^{2}/2+\dots$, one obtains
\begin{eqnarray*}
\prod_{r=1}^{d}(r!)^{n/2r} & = & \prod_{r=1}^{d}\exp\left(\frac{n}{2}\ln r -\frac{n}{2}+O\left(\frac{n}{2r}\ln r\right)\right) =\exp\left(\frac{n}{2}(\ln 1+\dots+\ln d)-\frac{dn}{2}+\sum_{r=1}^{d}O\left(\frac{n}{2r}\ln r\right)\right) \\ & = & \exp\left(\frac{n}{2}\ln d! -\frac{dn}{2}+O\left(\sum_{r=1}^{d}\frac{n}{2r}\ln r\right)\right) \\ & = & \exp\left(\frac{n}{2}(d\ln d - d +O(\ln d))-\frac{dn}{2}+O\left(\sum_{r=1}^{d}\frac{n}{2r}\ln r\right)\right) \\ & = & \exp\left(\frac{nd}{2}\ln d -nd +O(n\ln d) + O\left(\sum_{r=1}^{d}\frac{n}{2r}\ln r\right)\right) \\ & = & \exp\left(\frac{nd}{2}\left(\ln d - 2 +O\left(\frac{\ln d}{d}\right) +O\left(\sum_{r=1}^{d}\frac{1}{rd}\ln r\right)\right)\right) \\ & \leq & \exp\left(\frac{nd}{2}\left(\ln d - 2 +O\left(\frac{\ln d}{d}\right) +O\left(\sum_{r=1}^{d}\frac{1}{rd}\ln d\right)\right)\right) \\ & = & \exp\left(\frac{nd}{2}\left(\ln d - 2 +O\left(\frac{\ln d}{d}\right) +O\left(\frac{\ln d}{d}\sum_{r=1}^{d}\frac{1}{r}\right)\right)\right) \\ & = & \exp\left(\frac{nd}{2}\left(\ln d - 2 +O\left(\frac{\ln^{2}d}{d}\right)\right)\right) \leq \left(\left(1+O\left(\frac{\ln^{2}d}{d}\right)+O\left(\frac{\ln^{4}d}{d^{2}}\right)\right)\frac{d}{e^{2}}\right)^{dn/2} \\ & \leq & \left(\left(1+o\left(\frac{\ln^{3}d}{d}\right)\right)\frac{d}{e^{2}}\right)^{dn/2} \leq \left(\left(1+o\left(\frac{\ln^{3}n}{n}\right)\right)\frac{d}{e^{2}}\right)^{dn/2} \leq \left(\left(1+\frac{1}{\sqrt{n}}\right)\frac{d}{e^{2}}\right)^{dn/2},
\end{eqnarray*}
where we have used $d=\Theta(n)$ (say) a couple times towards the end.}
\begin{theorem}\label{linlur}
Suppose $n\in\bN$ is even with $1/n\ll1$\COMMENT{Sufficiently large needed for us to make the simplifications we make in applying Stirling's (and the asymptotic formula for the Harmonic number)}, and $d\geq n/2$.\COMMENT{We don't necessarily need~$d$ to be this large, but $d=\Theta(n)$ does seem helpful in the previous comment.}
Then every $G\in\cG_{d}^{n}$ satisfies
%
%
\[
M(G)\leq\left(\left(1+n^{-1/2}\right)\frac{d}{e^{2}}\right)^{dn/2}.
\]
\end{theorem}
On the other hand, Ferber, Jain, and Sudakov~\cite{FJS20} proved the following lower bound for the number of distinct $1$-factorizations in dense regular graphs.
\begin{theorem}[{\cite[Theorem 1.2]{FJS20}}]\label{Ferb}
Suppose $C>0$ and $n\in\bN$ is even with $1/n\ll1/C\ll1$, and $d\geq(1/2+n^{-1/C})n$.
Then every $G\in\cG_{d}^{n}$ satisfies
\[
M(G)\geq\left(\left(1-n^{-1/C}\right)\frac{d}{e^{2}}\right)^{dn/2}.
\]
\end{theorem}
Theorems~\ref{linlur} and~\ref{Ferb} immediately yield\COMMENT{Define $C$ to be the universal constant from Theorem~\ref{Ferb}. Then, applying Theorems~\ref{Ferb} and~\ref{linlur}, for all sufficiently large even integers~$n$, all $d\geq(1/2+n^{-1/C})n$, and all $G,H\in\cG_{d}^{n}$, we have
\[
\frac{M(G)}{M(H)} \leq \frac{((1+n^{-1/2})d/e^{2})^{dn/2}}{((1-n^{-1/C})d/e^{2})^{dn/2}} \leq \left(1+\frac{2n^{-1/C}}{1-n^{-1/C}}\right)^{dn/2} \leq (1+4n^{-1/C})^{dn/2} \leq \exp(2n^{1-1/C}d).
\]}
the following corollary:
\begin{cor}\label{wf}
Suppose $C>0$ and $n\in\bN$ is even with $1/n\ll1/C\ll1$, and $d\geq(1/2+n^{-1/C})n$.
Then
\[
\frac{M(G)}{M(H)}\leq\exp\left(2n^{1-1/C}d\right),
\]
for all $G,H\in\cG_{d}^{n}$.
\end{cor}
Recall that for $G\in\cG_{[n-1]}^{\text{col}}$ and a set of colours $D\subseteq [n-1]$, $G|_{D}$ is be the spanning subgraph of~$G$ containing precisely those edges of~$G$ which have colour in~$D$.
We now have all the tools we need to prove Lemma~\ref{main-switching-lemma}.
\lateproof{Lemma~\ref{main-switching-lemma}}
Let $C>0$ be the constant given by Corollary~\ref{wf} and suppose that $1/n\ll1/C, \mu, \eps$.
Let~$\pr$ denote the probability measure for the space corresponding to choosing $\mathbf{G}\in\cG_{[n-1]}^{\text{col}}$ uniformly at random.
Fix $D\subseteq [n-1]$ such that $|D|=\eps n$, and let~$\pr_{D}$ denote the probability measure for the space corresponding to choosing $\mathbf{H}\in\cG_{D}^{\text{col}}$ uniformly at random.
Let~$\cG_{D}^{\text{bad}}$ denote the set of $H\in\cG_{D}^{\text{col}}$ such that~$H$ is not $\eps$-locally edge-resilient.
For $H\in\cG_{D}^{\text{col}}$, write~$N_{H}$ for the number of distinct completions of~$H$ to an element $G\in\cG_{[n-1]}^{\text{col}}$; that is,~$N_{H}$ is the number of $1$-factorizations of the complement of~$H$.\COMMENT{A little lazy but hopefully clear that the complement has no interest in the colours of~$H$.}
Then
\begin{eqnarray*}
\prob{\mathbf{G}|_{D}\,\text{is}\,\text{not}\,\eps\text{-locally}\,\text{edge-resilient}} & = & \frac{\sum_{H\in\cG_{D}^{\text{bad}}}N_{H}}{\sum_{H'\in\cG_{D}^{\text{col}}}N_{H'}} \\ & \leq & \probd{\mathbf{H}\in\cG_{D}^{\text{bad}}}\cdot\exp\left(2n^{2-1/C}\right) \\ & \leq & \exp\left(-\eps^{3}n^{2}/2000\right),
\end{eqnarray*}
where we have used Lemma~\ref{localedge} and Corollary~\ref{wf}.
Then, union bounding over choices of~$D$, we deduce that\COMMENT{Let $C'\subseteq [n-1]$, $V'\subseteq V$ each have size $|V'|, |C'|\geq\eps n$. Let $V^{*}\subseteq V'$, $C^{*}\subseteq C'$ be arbitrary subsets of size exactly~$\eps n$. If $\mathbf{G}|_{C^{*}}$ is $\eps$-locally edge-resilient, then $\mathbf{G}$ has at least $\eps^{3}n^{2}/100$ edges with colour in $C^{*}$ and endpoints in $V^{*}$, whence~$\mathbf{G}$ has at least $\eps^{3}n^{2}/100$ edges with colour in $C'$ and endpoints in $V'$.}
\begin{equation}\label{eq:edgeresil}
\prob{\mathbf{G}\,\text{is}\,\text{not}\,\eps\text{-locally}\,\text{edge-resilient}}\leq \binom{n-1}{\eps n}\exp\left(-\frac{\eps^{3}n^{2}}{2000}\right)\leq\exp\left(-\frac{\eps^{3}n^{2}}{4000}\right).
\end{equation}
Now, fix $x\in V$, and fix $c\in [n-1]$.
Choose $F\subseteq[n-1]\setminus\{c\}$ of size $|F|=\mu n$ arbitrarily.
Write $F^{*}\coloneqq F\cup\{c\}$, and let~$\pr_{F^{*}}$ denote the probability measure for the space~$\cS$ corresponding to choosing $\mathbf{H}\in\cG_{F^{*}}^{\text{col}}$ uniformly at random.
Let~$\cP$ be an equitable (ordered) partition of~$F$ into four subsets.
Let~$A_{F^{*}}^{(x,c)}\subseteq\cG_{F^{*}}^{\text{col}}$ be the set of $H\in\cG_{F^{*}}^{\text{col}}$ such that~$H$ has a $5\mu n/4$-well-spread collection of at least $\mu^{4}n^{2}/2^{23}$ $(x,c)$-absorbing gadgets.
Then, considering $A_{F^{*}}^{(x,c)}$, $\cQ_{F^{*}}^{\text{col}}$, $\widetilde{\cQ}_{F^{*}}^{\text{col}}$ as events in~$\cS$, observe that
\begin{eqnarray*}
\probfc{\overline{A_{F^{*}}^{(x,c)}}} & \leq & \probfc{\widetilde{\cQ}_{F^{*}}^{\text{col}}}\probfc{\overline{A_{F^{*}}^{(x,c)}} \biggm| \widetilde{\cQ}_{F^{*}}^{\text{col}}}+\probfc{\overline{\widetilde{\cQ}_{F^{*}}^{\text{col}}}} \\ & \stackrel{(\ref{eq:qsubset})}{\leq} & \probfc{\overline{A_{F^{*}}^{(x,c)}} \biggm| \widetilde{\cQ}_{F^{*}}^{\text{col}}} + \probfc{\overline{\cQ_{F^{*}}^{\text{col}}}}.
\end{eqnarray*}
Thus, applying\COMMENT{Lemma~\ref{justgadgets} is applied in contrapositive form} Lemma~\ref{justgadgets}, Lemma~\ref{quasirandom}, and Lemma~\ref{masterswitch}, we obtain
\begin{eqnarray*}
\probfc{\overline{A_{F^{*}}^{(x,c)}}} & \leq & \probfc{r(\mathbf{H})\leq \mu^{4}n^{2}/2^{23} \biggm| \mathbf{H}\in\widetilde{\cQ}_{F^{*}}^{\text{col}}} +\probfc{\mathbf{H}\notin\cQ_{F^{*}}^{\text{col}}} \\ & \leq & \exp\left(-\frac{\mu^{4}n^{2}}{2^{24}}\right)+\exp\left(-\mu^{3}n^{2}\right) \leq \exp\left(-\frac{\mu^{4}n^{2}}{2^{25}}\right).
\end{eqnarray*}
Then by Corollary~\ref{wf},
\begin{eqnarray*}
\prob{\mathbf{G}|_{F^{*}}\notin A_{F^{*}}^{(x,c)}} & = & \frac{\sum_{H\in\overline{A_{F^{*}}^{(x,c)}}}N_{H}}{\sum_{H'\in\cG_{F^{*}}^{\text{col}}}N_{H'}} \leq \probfc{\mathbf{H}\notin A_{F^{*}}^{(x,c)}}\cdot\exp\left(2n^{2-1/C}\right) \\ & \leq & \exp\left(-\frac{\mu^{4}n^{2}}{2^{26}}\right).
\end{eqnarray*}
In particular, with probability at least $1-\exp(-\mu^{4}n^{2}/2^{26})$,~$\mathbf{G}$ has a~$5\mu n/4$-well-spread collection of at least~$\mu^{4}n^{2}/2^{23}$ $(x,c)$-absorbing gadgets.
Now, union bounding over all vertices $x\in V$ and all colours $c\in [n-1]$, we deduce that
\begin{equation}\label{eq:gadgres}
\prob{\mathbf{G}\,\text{is}\,\text{not}\,\mu\text{-robustly}\,\text{gadget-resilient}} \leq n^{2}\cdot\exp\left(-\frac{\mu^{4}n^{2}}{2^{26}}\right)\leq \exp\left(-\frac{\mu^{4}n^{2}}{2^{27}}\right).
\end{equation}
The result now follows by combining~(\ref{eq:edgeresil}) and~(\ref{eq:gadgres}).
\endproof

\section{Modifications and Corollaries}\label{corollary-section}

In this section we show how to derive the~$n$ odd case of Theorem~\ref{mainthm} from the case when $n$ is even.
We also show how Theorem~\ref{mainthm}\ref{mainthm:rainbow-cycle} implies Corollary~\ref{oddsym}.

\subsection{A rainbow Hamilton cycle for $n$ odd}\label{odd}

We actually derive the~$n$ odd case of Theorem~\ref{mainthm} from the following slightly stronger version of Theorem~\ref{mainthm}\ref{mainthm:rainbow-cycle} in the case when $n$ is even.  

\begin{theorem}\label{rainbow-cycle-missing-vtx}
If $n$ is even and $\phi$ is a uniformly random 1-factorization of $K_n$, then for every vertex $v$, with high probability, $\phi$ admits a rainbow cycle containing all of the colours and all of the vertices except $v$.
\end{theorem}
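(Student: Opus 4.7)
The strategy is to revisit the proof of the $n$ even case of Theorem~\ref{mainthm}\ref{mainthm:rainbow-cycle} and gain control over which vertex is omitted by the rainbow cycle.  Observe from the cycle construction in the proof of Proposition~\ref{main-absorbing-proposition} that the unique vertex of $V$ not covered by the cycle $P'\cup P''_1 \cup P''_2 \cup P'''$ is $v_\ell$, the last vertex in an \emph{arbitrary} enumeration $v_1,\dots,v_\ell$ of $V \setminus (V' \cup V(P'))$.  Hence it suffices to arrange, with high probability, that $v \in V \setminus (V' \cup V(P'))$: the enumeration may then be chosen so that $v_\ell = v$, forcing the cycle to miss exactly $v$.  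Concretely, we need $v \notin V'$ (not used by the absorber) and $v \notin V(P')$ (not used by the long rainbow path).

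To ensure $v \notin V'$, I would place $v$ deterministically in $V_{\mathrm{main}}$ in the absorber partition, or equivalently condition on the event $v \in V_{\mathrm{main}}$.  This event has constant probability $p_{\mathrm{main}}$, so conditioning multiplies the failure probabilities in Lemmas~\ref{main-switching-lemma} and~\ref{main-absorber-lemma} by at most $1/p_{\mathrm{main}} = O(1)$; these lemmas therefore continue to produce the desired absorbing structure in $(V_1, C_1, G_1)$ with high probability, and by construction $v \in V_{\mathrm{main}} = V \setminus V'$.

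To ensure $v \notin V(P')$, I would strengthen Lemma~\ref{long-rainbow-path-lemma} to produce a long rainbow path avoiding $v$.  Its proof (as sketched in the commented-out proof) applies Theorem~\ref{hypergraph-matching-thm} to an auxiliary hypergraph $\cH$ whose hyperedges correspond to short rainbow cycles that are concatenated into the long path.  I would simply delete from $\cH$ every hyperedge containing $v$.  Since the codegree satisfies $\Delta^c(\cH) = O(n^{\ell - 2}) = o(D)$, where $D = \Theta(n^{\ell - 1})$ is the common degree, the degree of each remaining vertex decreases by at most $o(D)$, so the hypotheses of Theorem~\ref{hypergraph-matching-thm} are preserved; the codegrees only decrease.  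The resulting $(\gamma,\cF)$-perfect matching yields a rainbow path $P'$ contained in $(V_{\mathrm{main}} \setminus \{v\}, C_{\mathrm{main}}, G_{\mathrm{main}})$ with $\gamma$-bounded remainder in $C_{\mathrm{main}}$ and leaving at most $\gamma n + 1$ vertices of $V_{\mathrm{main}}$ uncovered (the extra vertex being $v$ itself, which is within the $\delta/18$-bounded tolerance allowed by Proposition~\ref{main-absorbing-proposition}).

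Combining these two modifications, Proposition~\ref{main-absorbing-proposition} (with the enumeration of $V\setminus(V'\cup V(P'))$ chosen so that $v_\ell = v$) produces a rainbow cycle containing all colours in $C$ and missing precisely $v$, as required.  The only technical point, which I expect to be the main obstacle although it is routine, is verifying that deleting all hyperedges through $v$ from $\cH$ preserves the regularity and codegree hypotheses of Theorem~\ref{hypergraph-matching-thm}; this follows immediately from the strong codegree bound already established in the existing proof of Lemma~\ref{long-rainbow-path-lemma}.
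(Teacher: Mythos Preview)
Your approach is correct and essentially matches the paper's: both arrange that $v$ lies outside the absorber and outside $P'$, then exploit the freedom in the ordering $v_1,\dots,v_\ell$ in Proposition~\ref{main-absorbing-proposition} to set $v_\ell=v$. The only implementation difference concerns how $v$ is kept off $P'$. You strengthen Lemma~\ref{long-rainbow-path-lemma} by deleting $v$ (and all hyperedges through it) from the auxiliary hypergraph; the paper instead leaves Lemma~\ref{long-rainbow-path-lemma} untouched and, if $v$ happens to lie on $P'$, excises it inside Proposition~\ref{main-absorbing-proposition} by replacing $v$ with a $(V_{\mathrm{flex}},C_{\mathrm{flex}},G_{\mathrm{flex}})$-cover of its two $P'$-neighbours and the colour $\phi(vw)$. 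The paper explicitly remarks that your route would also work. One small slip: Lemma~\ref{main-switching-lemma} concerns only the random $1$-factorization $\phi$ and is independent of the absorber partition, so conditioning on $v\in V_{\mathrm{main}}$ has no bearing on it; the relevant effect is only on Lemma~\ref{main-absorber-lemma}.
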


We now argue that our proof of Theorem~\ref{mainthm} for $n$ even is sufficiently robust to also obtain this strengthening.  In particular, we can strengthen Lemma~\ref{main-absorber-lemma} so that the absorber does not contain $v$, since~\ref{flexible-sets-right-size}--\ref{many-covers-survive} in Lemma~\ref{absorbing-template-lemma}, \ref{absorbing-sets-right-size}--\ref{weak-pseudorandomness-in-slice} in Lemma~\ref{greedy-absorber-lemma}, and~\ref{linking-sets-right-size}--\ref{common-linking-nbrhood-large-reserve} in Lemma~\ref{linking-lemma} all hold after deleting $v$ from any part in the absorber partition.  The proof of Lemma~\ref{long-rainbow-path-lemma} is also sufficiently robust to guarantee that the rainbow path from the lemma does not contain $v$, but we do not need this strengthening, since we can instead strengthen Proposition~\ref{main-absorbing-proposition} to obtain a rainbow cycle containing $P' - v$ and all of the colours, as follows.  If $v\in V(P')$, then we replace $v$ in $P'$ with a $(V_{\mathrm{flex}}, C_{\mathrm{flex}}, G_{\mathrm{flex}})$-cover by deleting $v$ and adding a $(V_{\mathrm{flex}}, C_{\mathrm{flex}}, G_{\mathrm{flex}})$-cover of $w$, $w'$, and $\phi(vw)$, where $w$ and $w'$ are the vertices adjacent to $v$ in $P'$.  The remainder of the proof proceeds normally, letting $v_\ell \coloneqq v$ to ensure $v \notin V(P''_1)$.  In this procedure, we need to assume that $P'$ is contained in $(V\setminus V', C\setminus C', G')$ with $\delta/19$-bounded remainder (rather than $\delta/18$), but in Lemma~\ref{main-absorber-lemma} we can find a $38\gamma$-absorber, which completes the proof.

Now we show how Theorem~\ref{rainbow-cycle-missing-vtx} implies the odd $n$ case of Theorem~\ref{mainthm}.
\lateproof{Theorem~\ref{mainthm},~$n$ odd case}
  When~$n$ is odd, any optimal edge-colouring of~$K_{n}$ has~$n$ colour classes, each containing precisely $(n-1)/2$ edges.
For every colour~$c$, there is a unique vertex which has no incident edges of colour~$c$, and for every vertex~$v$, there is a unique colour such that~$v$ has no incident edges of this colour.  Thus, we can obtain a 1-factorization $\phi'$ of $K_{n+1}$ from an optimal edge-colouring $\phi$ of $K_n$ in the following way.  We add a vertex~$z$, and for every other vertex $v$, we add an edge~$zv$, where $\phi'(zv)$ is the unique colour $c$ such that $v$ is not incident to a $c$-edge in $K_n$.  Note that this operation produces a bijection from the set of $n$-edge-colourings of $K_n$ to the set of 1-factorizations of $K_{n+1}$.  Thus, if $n$ is odd and $\phi$ is a uniformly random optimal edge-colouring of $K_n$, then $\phi'$ is a uniformly random optimal edge-colouring of $K_{n+1}$.  By Theorem~\ref{rainbow-cycle-missing-vtx}, with high probability there is a rainbow cycle $F$ in $K_{n+1}$ containing all of the colours and all of the vertices except $z$, so $F$ is a rainbow Hamilton cycle in $K_n$, satisfying Theorem~\ref{mainthm}\ref{mainthm:rainbow-cycle}.  Deleting any edge from $F$ gives a rainbow Hamilton path, as required in Theorem~\ref{mainthm}\ref{mainthm:rainbow-path}.
\endproof

\subsection{Symmetric Latin squares}
Now we use Theorem~\ref{mainthm} to prove Corollary~\ref{oddsym}.

\lateproof{Corollary~\ref{oddsym}}
Suppose that~$n\in \mathbb N$ is odd.
Firstly, note that there is a one-to-one correspondence between the set~$\cL_{n}^{\text{sym}}$ of symmetric~$n\times n$ Latin squares with symbols in~$[n]$ (say) and the set~$\Phi_{n}$ of optimal edge-colourings of~$K_{n}$ on vertices~$[n]$ and with colours in~$[n]$.
Indeed, let~$\phi\in\Phi_{n}$.
Then we can construct a unique symmetric Latin square~$L_{\phi}\in \cL_{n}^{\text{sym}}$ by putting the symbol~$\phi(ij)$ in position~$(i,j)$ for all edges $ij\in E(K_{n})$, and for each position $(i,i)$ on the leading diagonal we now enter the unique symbol still missing from row~$i$.
Conversely, let $L\in\cL_{n}^{\text{sym}}$.
We can obtain a unique element $\phi_{L}\in\Phi_{n}$ from~$L$ in the following way.
Colour each edge~$ij$ of the complete graph~$K_{n}$ on vertex set~$[n]$ with the symbol in position~$(i,j)$ of~$L$.
It is clear that~$\phi_{L}$ is proper, and thus~$\phi_{L}$ is optimal.
Moreover, it is clear that we can uniquely recover~$L$ from~$\phi_{L}$.

Now, let $K_n^{\circ}$ be the graph obtained from $K_n$ by adding a loop $ii$ at every vertex $i\in[n]$, and for every $\phi\in\Phi_n$, let $\phi^\circ$ be the unique proper $n$-edge-colouring of $K_n^\circ$ such that the restriction of $\phi^\circ$ to the underlying simple graph is $\phi$.  The rainbow 2-factors in $K_n^\circ$ admitted by $\phi^\circ$ correspond to transversals in $L_\phi$ in the following way.  If $L\in\cL_{n}^{\text{sym}}$ and $T$ is a transversal of $L$, then the subgraph of $K_n^{\circ}$ induced by the edges $ij$ where $(i, j) \in T$ is a rainbow 2-factor.  If $\sigma$ is the underlying permutation of $T$, then the cycles of this rainbow 2-factor are precisely the cycles in the cycle decomposition of $\sigma$, up to orientation.  Therefore a rainbow Hamilton cycle in $K_n^\circ$ corresponds to two disjoint Hamilton transversals in $L_\phi$.

By these correspondences, for $n$ odd, if $\mathbf L \in \cL_{n}^{\text{sym}}$ is a uniformly random symmetric $n\times n$ Latin square, then $\phi_{\mathbf L}$ is a uniformly random optimal edge-colouring of $K_n$.  By Theorem~\ref{mainthm}\ref{mainthm:rainbow-cycle}, $\phi_{\mathbf{L}}$ admits a rainbow Hamilton cycle $F$ with high probability.  Since $F$ is also a rainbow Hamilton cycle in $K_n^{\circ}$, the corresponding transversals in $\mathbf L$ are Hamilton, as desired. 
\endproof

Note that, if~$n$ is odd, the leading diagonal of any $L\in\cL_{n}^{\text{sym}}$ is also a transversal, disjoint from any Hamilton transversal.
Indeed, by symmetry all symbols appear an even number of times off of the leading diagonal, and therefore an odd number of times (and thus exactly once) on the leading diagonal.

\bibliographystyle{amsplainv2}
\bibliography{References}
\vspace{20mm}
\end{document}